\documentclass{amsart}
\usepackage{amsmath,amssymb,graphicx,latexsym}
\usepackage{mathdots}		% \iddots
\usepackage{stmaryrd}		% \llbracket and \rrbracket
\usepackage{tikz}
\usepackage{tikz-cd}
\usepackage{hyperref}

% For subfigures
\usepackage{caption}
\usepackage{subcaption}

\newtheorem{theorem}{Theorem}
\newtheorem{conjecture}[theorem]{Conjecture}
\newtheorem{corollary}[theorem]{Corollary}
\newtheorem{lemma}[theorem]{Lemma}
\newtheorem{proposition}[theorem]{Proposition}
\newtheorem*{main theorem}{Theorem~\ref{kernel size upper bound}}
\newtheorem*{main theorem - asymptotic}{Theorem~\ref{kernel size asymptotic bound}}

\theoremstyle{definition}
\newtheorem{example}[theorem]{Example}
\newtheorem*{definition*}{Definition}
\newtheorem*{notation*}{Notation}
\newtheorem{remark}[theorem]{Remark}

\DeclareMathOperator{\lcm}{lcm}
\DeclareMathOperator{\orb}{orb}
\DeclareMathOperator{\partitions}{parts}
\DeclareMathOperator{\rad}{rad}

\newcommand{\F}{\mathbb{F}}

\newcommand{\Z}{\mathbb{Z}}

\newcommand{\field}{\F_q}

\newcommand{\Landau}{g}
\newcommand{\Landaulcm}{\mathcal L}

\newcommand{\dd}{\textnormal{d}}

\newcommand{\subl}{_\ell}
\newcommand{\subr}{_\textnormal{r}}
\newcommand{\subt}{_\textnormal{t}}
\newcommand{\subb}{_\textnormal{b}}

\newcommand{\nequiv}{\mathrel{\not\equiv}}
\newcommand{\colonequal}{\mathrel{\mathop:}=}

\newcommand{\size}[1]{\lvert{#1}\rvert}
\newcommand{\ceil}[1]{\left\lceil #1 \right\rceil}
\newcommand{\doublebracket}[1]{\llbracket #1 \rrbracket}
\newcommand{\floor}[1]{\left\lfloor #1 \right\rfloor}
\newcommand{\paren}[1]{\left( #1 \right)}

\newcommand{\seq}[1]{\cite[\href{http://oeis.org/#1}{#1}]{OEIS}}

\begin{document}

\title{An elementary proof of Bridy's theorem}

\author{Eric Rowland}
\address{
	Department of Mathematics \\
	Hofstra University \\
	Hempstead, NY \\
	USA
}
\author{Manon Stipulanti}
\address{
	Department of Mathematics \\
	University of Li\`ege\\
	All\'ee de la D\'ecouverte 12\\
	4000 Li\`ege, Belgium
}
\author{Reem Yassawi}
\address{
	School of Mathematical Sciences \\
	Queen Mary University of London \\
	Mile End Road \\
	London E1 4NS \\
	UK
}

\date{February 2, 2025}

\subjclass[2020]{11B85, 13F25, 14H05}

\thanks{The second author is an FNRS Research Associate supported by the Research grant
1.C.104.24F.
The third author was supported by the EPSRC, grant number EP/V007459/2.}

\begin{abstract}
Christol's theorem states that a power series with coefficients in a finite field is algebraic if and only if its coefficient sequence is automatic.
A natural question is how the size of a polynomial describing such a sequence relates to the size of an automaton describing the same sequence.
Bridy used tools from algebraic geometry to bound the size of the minimal automaton for a sequence, given its minimal polynomial.
We produce a new proof of Bridy's bound by embedding algebraic sequences as diagonals of rational functions.
\end{abstract}

\maketitle

\section{Introduction}\label{section: introduction}

A well-known result of Christol~\cite{Christol, Christol--Kamae--Mendes France--Rauzy} states that a sequence $a(n)_{n \geq 0}$ of elements in the finite field $\field$ is algebraic if and only if it is $q$-automatic.
That is, its generating series $F = \sum_{n \geq 0} a(n) x^n$ satisfies $P(x, F) = 0$ for some nonzero polynomial $P \in \field[x, y]$ precisely when there exists a finite automaton that outputs $a(n)$ when fed the standard base-$q$ representation of $n$.
Such sequences can therefore be represented both by polynomials and by automata.
A natural question is how the size of the automaton, measured by the number of states, depends on the size of the polynomial, measured by its \emph{height} $h \colonequal \deg_x P$ and \emph{degree} $d \colonequal \deg_y P$.
Using tools from algebraic geometry, Bridy~\cite{Bridy} showed that the number of states is in $(1 + o(1)) q^{h d}$ as $q$, $h$, or $d$ tends to infinity.

In this paper, we give a new proof of Bridy's theorem using tools from linear algebra and results about constant-recursive sequences.
Quite apart from the interest of providing an elementary proof of Bridy's result, our approach generalizes to settings that are not accessible to algebraic geometry.
An analogue of Christol's theorem has been established for sequences of $p$-adic integers~\cite{Christol Fonctions, Denef--Lipshitz}.
In a subsequent paper~\cite{Rowland--Yassawi algebraic series II}, we use our approach to bound the number of states in the minimal automaton for an algebraic sequence of $p$-adic integers reduced modulo~$p^\alpha$.

All automata in this article read representations of integers starting with the least significant digit; see Section~\ref{section: vector space}.
We will be interested in sequences with polynomial representations as follows.

\begin{definition*}
Let $P \in \field[x,y]$ such that $P(0, 0) = 0$ and $\frac{\partial P}{\partial y}(0, 0) \neq 0$.
The \emph{Furstenberg series} associated with $P$ is the unique power series $F \in \field\doublebracket{x}$ satisfying $F(0) = 0$ and $P(x, F) = 0$.
\end{definition*}

The condition $\frac{\partial P}{\partial y}(0, 0) \neq 0$ is a statement about the coefficient of $x^0 y^1$.
It guarantees that $d \geq 1$.
If $h = 0$, then $F$ is the trivial $0$ series, so we may assume $h \geq 1$.
Along with the condition $P(0, 0) = 0$, a version of the implicit function theorem guarantees the uniqueness of $F$~\cite[Theorem 2.9]{Kauers--Paule}.
Given a polynomial $P$ which does not satisfy the conditions $P(0, 0) = 0$ and $\frac{\partial P}{\partial y}(0, 0) \neq 0$, and a power series $F$ satisfying $P(x, F) = 0$, there is a technique to obtain a polynomial $\bar{P}$ and a ``shift'' $\bar{F}$ of $F$ such that
$\bar{F}$ is the Furstenberg series associated with $\bar{P}$.
For example, see~\cite[Lemma~6.2]{Adamczewski--Bell diagonalization} for details.
The results in this article are stated for Furstenberg series, but this technique can be used to extend them to general algebraic series.

Our main result is Theorem~\ref{kernel size upper bound}, whose statement needs a few definitions.
Define $\partitions(n)$ to be the set of all integer partitions of $n$.
We are interested in the lcm of an integer partition, since it will arise as $\lcm(\deg R_1, \dots, \deg R_k)$ where $R_1, \dots, R_k$ are the irreducible factors of a polynomial of fixed degree $n$.
The \emph{Landau function} $\Landau(n)$ outputs the maximum value of $\lcm(\sigma)$ over all integer partitions $\sigma \in \partitions(n)$~\seq{A000793}.
For example, $\Landau(5)$ is the maximum value among $\lcm(5)$, $\lcm(4, 1)$, $\lcm(3, 2)$, $\lcm(3, 1, 1)$, $\lcm(2, 2, 1)$, $\lcm(2, 1, 1, 1)$, and $\lcm(1, 1, 1, 1, 1)$, so we have $\Landau(5) = 6$.
The Landau function also appeared in Bridy's analysis.
We will use a variant of the Landau function that gives a better bound.
Define
\[
	\Landaulcm(l, m, n) \colonequal
	\max_{\substack{
		\vphantom{\sigma_1 \in \partitions(i)} 1 \leq i \leq l \\
		\vphantom{\sigma_2 \in \partitions(j)} 1 \leq j \leq m \\
		\vphantom{\sigma_3 \in \partitions(k)} 1 \leq k \leq n
	}}
	\max_{\substack{
		\sigma_1 \in \partitions(i) \\
		\sigma_2 \in \partitions(j) \\
		\sigma_3 \in \partitions(k)
	}}
	\lcm(\lcm(\sigma_1), \lcm(\sigma_2), \lcm(\sigma_3)).
\]

\begin{theorem}\label{kernel size upper bound}
Let $F = \sum_{n \geq 0} a(n) x^n \in \field\doublebracket{x} \setminus \{0\}$ be the Furstenberg series associated with a polynomial $P \in \field[x, y]$ of height $h$ and degree $d$.
Then the minimal $q$-automaton that generates $a(n)_{n\geq 0}$ has size at most
\[
	q^{h d} + q^{(h - 1) (d - 1)} \Landaulcm(h, d, d) + \floor{\log_q h} + \floor{\log_q \max(h, d)} + 3.
\]
\end{theorem}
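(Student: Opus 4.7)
The plan is to follow the diagonal-embedding strategy announced in the abstract. First I would convert $F$ into the diagonal of a bivariate rational function via a Furstenberg-style substitution, writing $F(x) = \operatorname{Diag} G(x,y)$ with $G(x,y) = R(x,y)/S(x,y)$, where $S$ is essentially $\tfrac{1}{y}P(xy,y)$ and $R$ is a suitable numerator built from $P$ and $\tfrac{\partial P}{\partial y}$. The bidegrees of $R$ and $S$ are then controlled explicitly by $h$ and $d$, and this in turn controls the dimension of the $\field$-vector space of numerators encountered later in the orbit, which feeds into the $q^{hd}$ main term.

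Next I would realize the minimal $q$-automaton through Cartier-type operators. For each digit $r \in \{0,1,\dots,q-1\}$, let $\Lambda_r$ be the operator sending $\sum_n c(n)\,x^n$ to $\sum_n c(qn + r)\,x^n$. The states of the minimal automaton are in bijection with the orbit $\{\Lambda_{r_k} \cdots \Lambda_{r_1} F : k \geq 0,\ r_i \in \{0,\dots,q-1\}\}$, so the theorem reduces to bounding the size of this orbit. When $F = \operatorname{Diag} G$, each $\Lambda_r F$ is again the diagonal of a rational function whose denominator divides a Frobenius power of $S$ and whose numerator lies in a fixed finite-dimensional $\field$-vector space $V$ of polynomials of bidegree strictly below that of $S$. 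A careful degree count bounds $\dim V \leq hd$, producing the main term $q^{hd}$.

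The remaining terms come from two further refinements. The logarithmic contributions $\floor{\log_q h}$ and $\ceil{\log_q \max(h, d-1)}$ and the constant $3$ account for a short transient phase during which the iterates have not yet stabilized in $V$, and I would bound this transient by estimating how many Cartier steps are needed to reduce a generic numerator or denominator to the range enforced by $V$. To obtain the $q^{(h-1)(d-1)}\Landaulcm(h,d,d)$ term I would isolate a subspace $V' \subset V$ of dimension at most $(h-1)(d-1)$ on which the $\Lambda_r$ act invertibly, and on which the orbit is ultimately periodic. The period is governed by the constant-recursive theory of sequences over $\field$: it divides the lcm of the degrees of the irreducible factors of a polynomial naturally attached to $S$, and since those factors decompose according to the bidegree $(h,d)$ rather than the total degree, the period is controlled by $\Landaulcm(h,d,d)$ instead of the coarser $\Landau(hd)$ that appears in Bridy's argument.

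The principal obstacle is twofold. First, producing a diagonal representation of $F$ that gives the sharp dimension bound $\dim V \leq hd$: a naive Furstenberg substitution would yield something closer to $(h+d)^2$, and getting down to $hd$ requires choosing $R$ so that cancellations against $S$ lower the bidegree. Second, and harder, is showing that the period of the orbit on $V'$ is captured by the three-variable Landau variant $\Landaulcm(h,d,d)$ rather than by $\Landau(hd)$; this needs a structural analysis of how the irreducible factors of the denominator decompose according to their $x$- and $y$-bidegrees, which is precisely the partition structure encoded in the definition of $\Landaulcm$.
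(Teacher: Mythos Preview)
Your broad architecture---Furstenberg diagonal, Cartier operators, a finite-dimensional space of numerators of size $q^{hd}$, a transient accounting for the logarithmic terms---matches the paper. The substantive gap is in your explanation of the term $q^{(h-1)(d-1)}\Landaulcm(h,d,d)$, where your description is structurally inverted relative to what actually works.

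In the paper, once any nonzero digit is read the state lands in the space $W$ of size $q^{hd}$; the extra states all come from the orbit of the initial numerator $S_0=y\,\partial P/\partial y$ under the \emph{single} operator $\lambda_{0,0}$. That orbit is bounded by splitting the ambient space $V$ into an interior $V^\circ$ of dimension $(h-1)(d-1)$ and three one-dimensional ``borders'' (left, right, top). The key lemma is that on each border $\lambda_{0,0}$ is emulated by a \emph{univariate} operator $\lambda_0(S)=\Lambda_0(S R^{q-1})$, where $R$ is respectively $A_0/y$, $A_h/y$, $B_d$ (the left, right, top edge coefficients of $P$). The eventual period of each univariate orbit divides $\lcm(\deg R_1,\dots,\deg R_k)$ for the irreducible factors of that $R$; the three arguments $(h,d,d)$ in $\Landaulcm$ are the degree bounds on these three separate univariate polynomials, not a bidegree decomposition of a single bivariate factor. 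The product $q^{(h-1)(d-1)}\cdot\Landaulcm(h,d,d)$ then arises because the orbit size on $V$ is at most $|V^\circ|$ times the orbit size on the border quotient.

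Your account reverses these roles: you put the periodic dynamics on the $(h-1)(d-1)$-dimensional piece and attribute $\Landaulcm$ to a bidegree splitting of the irreducible factors of the denominator. That cannot produce the stated bound---a periodic orbit inside a space of size $q^{(h-1)(d-1)}$ has length at most $q^{(h-1)(d-1)}$, not $q^{(h-1)(d-1)}\cdot\Landaulcm(h,d,d)$---and bivariate irreducible factors do not carry a partition structure of the required shape. The missing idea is precisely the reduction to three univariate border problems.
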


In Section~\ref{section: numeric examples}, we give numeric evidence that the bound in Theorem~\ref{kernel size upper bound} is asymptotically sharp.
As a corollary of Theorem~\ref{kernel size upper bound}, we obtain Bridy's theorem~\cite{Bridy}.

\begin{theorem}\label{kernel size asymptotic bound}
Let $F = \sum_{n \geq 0} a(n) x^n \in \field\doublebracket{x}$ be the Furstenberg series associated with a polynomial $P \in \field[x, y]$ of height $h$ and degree $d$.
Then the size of the minimal $q$-automaton generating $a(n)_{n\geq 0}$ is in $(1 + o(1)) q^{h d}$ as any of $q$, $h$, or $d$ tends to infinity and the others remain constant.
\end{theorem}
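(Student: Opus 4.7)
The plan is to extract the asymptotic content directly from the explicit bound in Theorem~\ref{kernel size upper bound}. Write $B(q,h,d)$ for the right-hand side of that bound. The leading term is already $q^{hd}$, so it suffices to show that each of the remaining summands is $o(q^{hd})$ in all three asymptotic regimes, and then the bound itself is $(1+o(1))q^{hd}$; combined with the lower bound coming from Bridy's argument (or a standard polynomial-counting argument producing at least $q^{hd}$ states), this yields the stated asymptotic.

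The only nontrivial summand is the middle one, $q^{(h-1)(d-1)} \Landaulcm(h,d,d)$. I would first observe the crude bound $\Landaulcm(h,d,d) \leq \Landau(h)\, \Landau(d)^2$, which follows straight from the definition since $\lcm(\sigma_i) \leq \Landau(\cdot)$ on each of the three coordinates and $\lcm$ of a triple is bounded by the product. Then the contribution of that summand, relative to $q^{hd}$, is
\[
	\frac{q^{(h-1)(d-1)} \Landaulcm(h,d,d)}{q^{hd}} \leq \frac{\Landau(h)\, \Landau(d)^2}{q^{h+d-1}}.
\]
Now I would invoke Landau's classical estimate $\log \Landau(n) \sim \sqrt{n \log n}$, so that $\log \Landau(n) = o(n)$. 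In the regime $q\to\infty$ with $h,d$ fixed, the numerator is a constant and the denominator tends to infinity. In the regime $h\to\infty$ with $q,d$ fixed, the numerator grows like $e^{O(\sqrt{h \log h})}$ while the denominator grows like $q^h = e^{h\log q}$, so the ratio tends to $0$; the regime $d\to\infty$ is symmetric. The remaining summands $\floor{\log_q h}+\ceil{\log_q \max(h,d-1)}+3$ are either bounded or at most $O(\log(h+d))$, which is trivially $o(q^{hd})$ in every regime.

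The main (mild) obstacle is choosing the right form of Landau's estimate: all I really need is $\log \Landau(n) = o(n)$, which is weaker than the sharp $\sim \sqrt{n\log n}$ asymptotic and admits elementary proofs. With that in hand, the three limit regimes become routine estimates on exponential versus sub-exponential growth, and assembling them gives $B(q,h,d) = (1+o(1))q^{hd}$, finishing the proof.
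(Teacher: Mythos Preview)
Your approach is essentially the paper's: deduce the asymptotic directly from Theorem~\ref{kernel size upper bound} by showing each non-leading summand is $o(q^{hd})$ via Landau's estimate $\log \Landau(n) = o(n)$. The only difference is cosmetic: the paper bounds $\Landaulcm(h,d,d) \leq \Landau(h+2d)$ (a triple of partitions of $i\le h$, $j\le d$, $k\le d$ concatenates to a partition of some integer $\le h+2d$), whereas you use the product bound $\Landaulcm(h,d,d) \leq \Landau(h)\,\Landau(d)^2$. Either inequality feeds into the same exponential-vs-subexponential comparison and gives the result.

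One point to drop: you do not need, and should not claim, a matching lower bound. The assertion ``the size is in $(1+o(1))q^{hd}$'' in this paper is an \emph{upper} bound (the proof in the paper establishes only $\size{\ker_q} \le (1+o(1))q^{hd}$); a universal lower bound of order $q^{hd}$ is false for general $F$ (for instance the Catalan example in Section~\ref{section: numeric examples} has automaton size $p+3$, well below $p^2$). Sharpness for special families is a separate discussion. Also, since Theorem~\ref{kernel size upper bound} assumes $F\neq 0$, you should dispose of the trivial case $h=0$ (where $F=0$ and the kernel has size $1$) before invoking it, as the paper does.
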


Bridy~\cite{Bridy} also showed that the number of states is in $(1 + o(1)) q^{h + d + g - 1}$ as any of $q, h, d, g$ tends to infinity, where $g$ is the genus of $P$.
Since the genus satisfies $g \leq (h - 1) (d - 1)$, Bridy obtains the bound $(1 + o(1)) q^{h d}$ for the number of states.
Let $G$ be the number of interior points in the Newton polygon of $P$.
We have $g \leq G$ by Baker's theorem~\cite{Beelen}, with equality generically.
In our setting, one could use $G$ to obtain more refined bounds than in Theorem~\ref{kernel size upper bound}, analogous to Bridy's bound.
This approach is discussed briefly in~\cite[Section 6]{Adamczewski--Yassawi}.

Broadly, the proof of Theorem~\ref{kernel size upper bound} consists of two steps.
First, in Section~\ref{section: vector space}, we represent states in the automaton with bivariate polynomials, and we establish basic properties of a space $W$ of bivariate polynomials containing most of the automaton's states.
Namely, $W$ contains all states except those in the orbit $\orb_{\lambda_{0,0}}(S_0)$ of the initial state $S_0$ under the linear transformation $\lambda_{0, 0}(S) \colonequal \Lambda_{0, 0}\!\paren{S Q^{q - 1}}$, where $\Lambda_{0, 0}$ is a Cartier operator and $Q = P / y$.
The space $W$ has size $q^{h d}$, giving the main term in Theorem~\ref{kernel size upper bound}.
This first step is elementary and yields an initial upper bound of $q^{h d} + \size{\orb_{\lambda_{0,0}}(S_0)}$ for the number of states.

The second step is considerably more involved.
We show that the size of an orbit under $\lambda_{0, 0}$ is small, giving the lower-order terms in Theorem~\ref{kernel size upper bound}.
The key idea is that one can bound the orbit size under $\lambda_{0, 0}$ in terms of the orbit sizes under restrictions of $\lambda_{0, 0}$ to four subspaces.
One subspace has size $q^{(h - 1) (d - 1)}$.
On the other three, the operator $\lambda_{0, 0}$ behaves like linear transformations $\lambda_0(S) \colonequal \Lambda_0\!\paren{S R^{q - 1}}$ on univariate polynomials for certain Laurent polynomials $R$, where $\Lambda_0$ is a Cartier operator.
We show how to bound the orbit size under $\lambda_0$ in terms of the factorization of $R$, using the period length of the coefficient sequence of the series $\frac{1}{R}$.
Surprisingly, this period length is not dependent on $q$; this appears starting in Theorem~\ref{square-free orbit size upper bound - positive degree}.

Our proof of Theorem~\ref{kernel size upper bound} begins by converting the representation of the series $F$ by a polynomial $P$ to a representation as the diagonal of a rational function.
More generally, in Theorem~\ref{kernel size upper bound - diagonals} we use the same two steps to bound the automaton size for the diagonal of a rational function in two variables.
For more than two variables, new techniques would be needed to further extend the second step.
The current techniques would only give an analogue of Corollary~\ref{kernel initial upper bound}.

This analogue is already included in recent work by Adamczewski, Bostan, and Caruso~\cite{Adamczewski--Bostan--Caruso}, who bound the dimension of a vector space containing the kernel (see Section~\ref{section: vector space}) of a multidimensional algebraic sequence, generalizing a result of Bostan, Caruso, Christol, and Dumas~\cite{Bostan--Caruso--Christol--Dumas} for one-dimensional algebraic sequences.
These papers also use diagonals, and the argument fundamentally follows the lines of a multivariate version of Section~\ref{section: vector space} below.
However, like Bridy, the authors of \cite{Adamczewski--Bostan--Caruso} give a more refined bound in terms of the genus of the associated surface.
They also give several applications of their bound, establishing a polynomial bound on the algebraic degree of reductions modulo~$p$ of diagonals of multivariate algebraic power series, answering a question of Deligne~\cite{Deligne}, and improving Harase's bound~\cite{Harase II} on the degree of the Hadamard product of two algebraic power series.

In Section~\ref{section: vector space}, we lay the groundwork and obtain a preliminary, coarser bound on the size of the automaton, in Corollary~\ref{kernel preliminary upper bound}.
In Section~\ref{section: structure}, we study the linear structure of the operator $\lambda_{0,0}$ and show in Proposition~\ref{univariate emulation} that it can be emulated by univariate operators $\lambda_0$ on certain subspaces of $\field[z]$.
In Section~\ref{section: orbit size univariate}, we bound the orbit size of a polynomial under $\lambda_0$, leading to Theorem~\ref{univariate orbit size upper bound}.
Finally in Section~\ref{section: orbit size}, we tie these results together to obtain Theorem~\ref{kernel size upper bound} and an analogous result for diagonals of rational functions.
In Section~\ref{section: univariate conjectures}, we give some intriguing conjectures about orbits under $\lambda_0$ that were discovered in the process of proving Theorem~\ref{kernel size upper bound} and ultimately not used.

\section{Numeric evidence for sharpness}\label{section: numeric examples}

In this section, we systematically find Furstenberg series, represented by polynomials $P$, for which the corresponding automata are large.
The computations are performed with the Mathematica package \textsc{IntegerSequences}~\cite{IntegerSequences, IntegerSequences article}.

For fixed values of $q$, $h$, and $d$, we generate all polynomials $P \in \field[x, y]$ with height~$h$ and degree~$d$ that satisfy the conditions in the definition of a Furstenberg series.
We also require that the coefficient of $x^0 y^1$ in $P$ is $1$, since $P$ and $c P$ (where $c \neq 0$) define the same series $F$ and produce the same automaton.
Then, for each $P$, we use the construction described in Section~\ref{section: vector space} below to compute an automaton generating the coefficient sequence of its associated Furstenberg series.
In general, this construction does not produce a minimal automaton.
Minimizing is costly, so to expand the feasible search space we do not minimize automata at this step.
Instead, we determine the size of each unminimized automaton, select one of the polynomials $P$ that maximizes this size, and minimize its automaton.

Table~\ref{automaton size table} in the appendix lists the maximum unminimized automaton size for several values of $q$, $h$, and $d$, along with one polynomial that achieves this size and the value of the bound in Theorem~\ref{kernel size upper bound}.
For each polynomial in Table~\ref{automaton size table}, the automaton size drops by at most $1$ during minimization.
This justifies the decision to not minimize all automata initially.
For $d = 1$ (that is, rational series), Bridy showed that the bound $(1 + o(1)) q^{h d}$ is sharp by constructing polynomials $P$ from univariate primitive polynomials~\cite[Proposition 3.14]{Bridy}.
Table~\ref{automaton size table} suggests this bound is also sharp for $d \geq 2$.
For $d = 2$, Figure~\ref{automaton size figure} in the appendix shows the distribution of unminimized automaton sizes for some values of $q$ and $h$ by plotting the number of polynomials with each size.

Most of the article is concerned with bounding the orbit sizes of polynomials under the operator $\lambda_{0, 0}$.
This will yield the terms other than $q^{h d}$ in Theorem~\ref{kernel size upper bound}.
Table~\ref{orbit size table} lists the maximum orbit size under $\lambda_{0, 0}$ for several values of $q$, $h$, and $d$.

Whereas the polynomials in Table~\ref{automaton size table} produce automata close to the upper bound, some algebraic sequences that arise in combinatorics, when reduced modulo~$p$, are generated by rather small automata.
For example, let $C(n)$ be the $n$th Catalan number~\cite[A000108]{OEIS}.
Its generating series $F = 1 + x + 2 x^2 + 5 x^3 + \cdots$ satisfies $x F^2 - F + 1 = 0$, so $h = 1$ and $d = 2$.
Burns~\cite[Section~4]{Burns} gave an explicit construction for an automaton that generates $(C(n) \bmod p)_{n \geq 0}$.
This automaton has only $p + 3$ states, compared to the bound $p^2 + \Landaulcm(1, 2, 2) + 3 = p^2 + 5$ in Theorem~\ref{kernel size upper bound}.

\section{The vector space of possible states}\label{section: vector space}

Christol's theorem implies that an algebraic sequence of elements in $\field$ is $q$-automatic.
In this section, we establish a correspondence between states of an automaton generating such a sequence and polynomials in a finite-dimensional $\field$-vector space.
We do this by converting states in the automaton first to sequences, then to power series, and finally to polynomials.
This correspondence provides the foundation for the rest of the article, and we use it to give a preliminary upper bound on the number of states in Corollary~\ref{kernel preliminary upper bound}.

We assume the reader is familiar with deterministic finite automata with output.
See~\cite{Allouche--Shallit} for a comprehensive treatment and \cite{Rowland} for a short introduction.
An automaton with input alphabet $\{0, 1, \dots, q - 1\}$ generates the $q$-automatic sequence $a(n)_{n \geq 0}$, where $a(n)$ is the output of the automaton when fed the standard base-$q$ representation of $n$, starting with the least significant digit, i.e.\ when the digits are fed from right to left.
In general, automata are sensitive to leading $0$s; that is, the output changes when fed a nonstandard representation of $n$.
One can always produce an automaton without this drawback~\cite[Theorem~5.2.3]{Allouche--Shallit}, although the number of states may increase.

\begin{example}
The two automata
\[
	\vcenter{\hbox{\includegraphics[width = .4\textwidth]{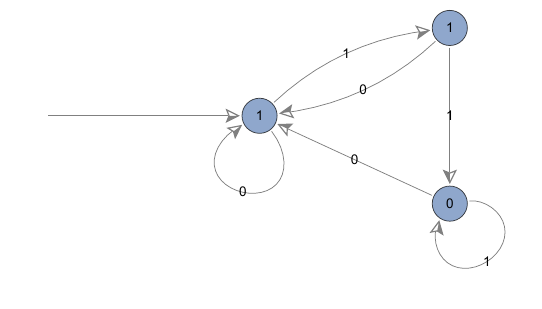}}}
	\vcenter{\hbox{\includegraphics[width = .5\textwidth]{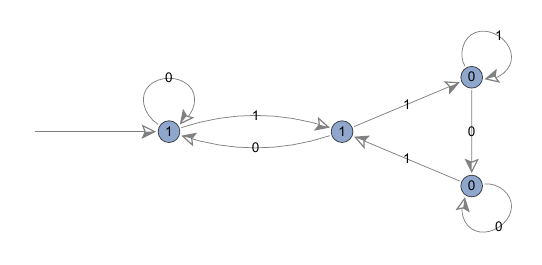}}}
\]
generate the same $2$-automatic sequence $1, 1, 1, 0, 1, 1, 0, 0, 1, 1, 1, 1, 0, 0, 0, 0, \dots$.
The behavior of the first automaton is affected by leading $0$s; for example, feeding $11$ into this automaton produces the output $0$, whereas the input $011$ produces the output $1$.
The behavior of the second automaton is not affected by leading $0$s, and in fact this is the smallest automaton with this property for this sequence.
\end{example}

Given a $q$-automatic sequence $a(n)_{n \geq 0}$, we refer to the smallest automaton that generates $a(n)_{n \geq 0}$ and that is not affected by leading $0$s as its \emph{minimal automaton}.
Theorem~\ref{kernel size upper bound} gives an upper bound on the number of states in the minimal automaton.
Theorem~\ref{kernel size upper bound} also gives an upper bound on the size of the \emph{$q$-kernel} of $a(n)_{n \geq 0}$, defined as
\[
	\ker_q(a(n)_{n \geq 0}) \colonequal \{a(q^e n + r)_{n \geq 0} : \text{$e \geq 0$ and $0 \leq r \leq q^e - 1$}\}.
\]
A sequence is $q$-automatic if and only if its $q$-kernel is finite; this is known as Eilenberg's theorem.
Moreover, the states of the minimal automaton are in bijection with the elements of the $q$-kernel.

We then represent kernel sequences $a(q^e n + r)_{n \geq 0}$ by their generating series $\sum_{n \geq 0} a(q^e n + r) x^n$.
Let $\field\doublebracket{x}$ and $\field\doublebracket{x, y}$ denote the sets of univariate and bivariate power series with coefficients in $\field$.
Analogously, $\field[x]$ and $\field[x, y]$ denote sets of polynomials.
Elements of the $q$-kernel (and therefore states in the minimal automaton) can be accessed by applying the following operators.

\begin{definition*}
Let $n \in \Z$.
For each $r \in \{0, 1, \dots, q - 1\}$, define the \emph{Cartier operator} $\Lambda_r$ on the monomial $x^n$ by
\[
	\Lambda_r(x^n) =
	\begin{cases}
		x^\frac{n - r}{q}	& \text{if $n \equiv r \mod q$} \\
		0			& \text{otherwise}.
	\end{cases}
\]
Then extend $\Lambda_r$ linearly to polynomials (as well as to Laurent polynomials and Laurent series) in $x$ with coefficients in $\field$.
In particular, for polynomials we have
\[
	\Lambda_r\!\paren{\sum_{n = 0}^N a(n) x^n}
	= \sum_{n = 0}^{\floor{N/q}} a(q n + r) x^n.
\]
Similarly, for $m, n \in \Z$ and $r, s \in \{0, 1, \dots, q - 1\}$, define the bivariate Cartier operator
\[
	\Lambda_{r, s}(x^m y^n) =
	\begin{cases}
		x^\frac{m - r}{q} y^\frac{n - s}{q}	& \text{if $m \equiv r \mod q$ and $n \equiv s \mod q$} \\
		0						& \text{otherwise},
	\end{cases}
\]
and extend $\Lambda_{r, s}$ linearly to bivariate polynomials (as well as to Laurent polynomials and Laurent series).
\end{definition*}

The map $\Lambda_r$ on $\field\doublebracket{x}$ corresponds to the map $a(n)_{n \geq 0} \mapsto a(q n + r)_{n \geq 0}$.
An advantage of representing sequences by power series is that a factor of the form $F^q$ can be pulled out of a Cartier operator, as in the following proposition.
We will use this repeatedly.
The univariate case is proved in~\cite[Lemma~12.2.2]{Allouche--Shallit} for power series; the Laurent series and bivariate cases are similar.

\begin{proposition}\label{Cartier}
If $F$ and $G$ are Laurent series in $x$ with coefficients in $\field$, then $\Lambda_r(G F^q) = \Lambda_r(G) F$.
Similarly, if $F, G \in \field\doublebracket{x, y}$, then $\Lambda_{r, s}(G F^q) = \Lambda_{r, s}(G) F$.
\end{proposition}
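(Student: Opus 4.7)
The plan is to reduce the identity to a monomial case using $\F_q$-linearity, and then exploit the Frobenius endomorphism. Both sides of $\Lambda_r(G F^q) = \Lambda_r(G) F$ are $\F_q$-linear in $G$ (the right-hand side trivially; the left-hand side because $\Lambda_r$ is linear and multiplication by $F^q$ is linear in $G$), so it suffices to verify the identity when $G = x^m$ for an arbitrary $m \in \Z$. The bivariate statement reduces analogously to the case $G = x^m y^n$.

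The key ingredient is that over $\F_q$, the Frobenius $\xi \mapsto \xi^q$ fixes every element. Writing $F = \sum_k b(k) x^k$, and using that raising to the $q$-th power is a ring endomorphism in characteristic $p$, we obtain
\[
	F^q = \sum_k b(k)^q x^{q k} = \sum_k b(k) x^{q k},
\]
so the support of $F^q$ consists entirely of exponents divisible by $q$. Consequently $x^m F^q = \sum_k b(k) x^{m + q k}$, and every exponent appearing is congruent to $m$ modulo $q$.

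Applying $\Lambda_r$ term by term then gives a clean case split: if $m \nequiv r \pmod q$, every exponent of $x^m F^q$ is incongruent to $r$ mod $q$, so $\Lambda_r(x^m F^q) = 0 = \Lambda_r(x^m) F$; and if $m \equiv r \pmod q$, then
\[
	\Lambda_r(x^m F^q) = \sum_k b(k)\, x^{(m - r)/q + k} = x^{(m - r)/q} F = \Lambda_r(x^m) F.
\]
The bivariate case follows identically after writing $F = \sum_{j, k} b(j, k) x^j y^k$, deducing $F^q = \sum_{j, k} b(j, k) x^{q j} y^{q k}$ from Frobenius, and applying the congruence test modulo $q$ to each exponent separately.

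There is no real obstacle here; the argument is essentially formal. The only care needed is to verify the manipulations make sense for (Laurent) series — which they do, since in the Laurent setting one assumes supports bounded below so that $F^q$ is a Laurent series and $G F^q$ is well-defined coefficient by coefficient, and in the bivariate power series setting all coefficient-level sums are automatically finite.
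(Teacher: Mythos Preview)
Your proof is correct and is essentially the standard argument: reduce to monomials by linearity, use Frobenius to see that $F^q$ is supported on exponents divisible by $q$, and then compute directly. The paper itself does not give a proof but cites \cite[Lemma~12.2.2]{Allouche--Shallit} for the univariate power series case and asserts the Laurent and bivariate cases are similar; your write-up is exactly the kind of argument that reference contains.
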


The final step is to use a theorem of Furstenberg~\cite{Furstenberg} to convert each algebraic power series $\sum_{n \geq 0} a(q^e n + r) x^n$ corresponding to a kernel sequence to the diagonal of a rational function.
Since different rational functions can have the same diagonal, a given kernel sequence is potentially the diagonal of several rational functions that arise, so the resulting automaton is not necessarily minimal.
However, the number of distinct rational functions that arise is an upper bound on the size of the kernel.
Furstenberg's theorem holds more generally over every field, but we state it for $\field$.
The \emph{diagonal operator} $\mathcal{D} \colon \field\doublebracket{x, y} \to \field\doublebracket{x}$ is defined by
\[
	\mathcal{D}\!\paren{\sum_{m \geq 0} \sum_{n \geq 0} a(m, n) x^m y^n}
	= \sum_{n \geq 0} a(n, n) x^n.
\]
For a bivariate series or polynomial $P$, define $P(a, b)$ to be $P|_{x \to a, y \to b}$, and similarly for univariate series;
for example, if $P = 3 x + 2 y + x y$, then $P(x y, y) = 3 x y + 2 y + x y^2$, $\frac{\partial P}{\partial y} = 2 + x$, $\frac{\partial P}{\partial y}(x y, y) = 2 + x y$, and $\frac{\partial P}{\partial y}(0, 0) = 2$.

Recall the definition of a Furstenberg series from the introduction.

\begin{theorem}[Furstenberg]\label{Furstenberg}
Let $F \in \field\doublebracket{x}$ be the Furstenberg series associated with a polynomial $P \in \field[x, y]$.
Then
\[
	F = \mathcal{D}\!\paren{\frac{
		y \frac{\partial P}{\partial y}(x y, y)
	}{
		P(x y, y) / y
	}}.
\]
\end{theorem}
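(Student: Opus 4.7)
The plan is to convert the diagonal extraction into a Laurent-series coefficient computation, factor $P$ into linear factors over the algebraic closure of $\field((t))$, and identify which root contributes via a Newton polygon argument.

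First, I would prove the general identity $\mathcal{D}(H)(t) = [y^0]\,H(t/y, y)$ for any $H \in \field\doublebracket{x, y}$, where the substitution $x \mapsto t/y$ is applied monomial-by-monomial — $x^m y^n$ becomes $t^m y^{n-m}$ — producing a formal Laurent series in $y$ whose coefficients live in $\field\doublebracket{t}$. Applied to the series $R(x, y) \colonequal y \frac{\partial P}{\partial y}(xy, y) / (P(xy, y)/y)$, which is a bona fide element of $\field\doublebracket{x, y}$ since $P(xy, y)/y$ is a polynomial with nonzero constant term $\frac{\partial P}{\partial y}(0, 0)$, this substitution yields $R(t/y, y) = y^2 \frac{\partial P}{\partial y}(t, y)/P(t, y)$. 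Thus the theorem reduces to showing that $[y^{-2}]$ of $\frac{\partial P}{\partial y}(t, y)/P(t, y)$ equals $F(t)$, where the coefficient is read in the Laurent expansion induced by the substitution.

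Next, I would factor $P(t, y) = \tilde c_d(t)\prod_{i=0}^{d-1}(y - F_i(t))$ over $\overline{\field((t))}$ with Puiseux series roots $F_i$, so that partial fractions gives $\frac{\partial P}{\partial y}(t, y)/P(t, y) = \sum_i 1/(y - F_i(t))$. The hypotheses $P(0, 0) = 0$ and $\frac{\partial P}{\partial y}(0, 0) \neq 0$ force $v_t(\tilde c_0) \geq 1$ and $v_t(\tilde c_1) = 0$, so the Newton polygon of $P(t, y)$ has a vertex at $(1, 0)$ with a single segment of slope at most $-1$ to its left. This shows exactly one root $F_0$ has $v_t(F_0) \geq 1$; by uniqueness of the Furstenberg series, $F_0 = F$, while the other roots satisfy $v_t(F_i) \leq 0$.

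For the root $F_0$, the expansion in powers of $F_0/y$ gives $\sum_{k \geq 0} F_0^k/y^{k+1}$, contributing $F_0 = F$ to $[y^{-2}]$; for each other root, expansion in powers of $y/F_i$ gives $-\sum_{k \geq 0} y^k/F_i^{k+1}$, contributing $0$ to $[y^{-2}]$. Summing yields $\mathcal{D}(R)(t) = F(t)$, as desired. The main obstacle is justifying that these are indeed the correct expansion choices — that the partial-fraction expansion over $\overline{\field((t))}$ reassembles into the specific Laurent series in $y$ with $\field\doublebracket{t}$-coefficients produced by substituting $x \mapsto t/y$ into $R$. The key point here is that this substitution yields $y^{-l}$-coefficients of $t$-valuation at least $l$ for $l \geq 1$, and tracking this valuation constraint shows it selects exactly the above expansion of each $1/(y - F_i)$.
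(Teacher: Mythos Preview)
The paper does not give its own proof of this statement: Theorem~\ref{Furstenberg} is quoted from Furstenberg's 1967 paper and used as a black box, so there is no ``paper's proof'' to compare against.  Your outline is essentially the classical residue/partial-fraction argument that goes back to Furstenberg himself, and it is correct in substance.

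Two small points are worth tightening.  First, you speak of ``Puiseux series roots'' of $P(t,y)$ over $\overline{\field((t))}$; in positive characteristic the algebraic closure of $\field((t))$ is strictly larger than the Puiseux field, so this phrasing is inaccurate.  Nothing in your argument actually uses a Puiseux expansion --- only the existence of the roots in $\overline{\field((t))}$ and their $t$-valuations, which the Newton polygon controls --- so simply replace ``Puiseux series roots $F_i$'' by ``roots $F_i \in \overline{\field((t))}$.''  Second, writing $P(t,y) = \tilde c_d(t)\prod_{i=0}^{d-1}(y-F_i)$ tacitly assumes the roots are simple, which need not hold; the logarithmic derivative is really $\partial_y P/P = \sum_i e_i/(y-F_i)$ with multiplicities $e_i$.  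This does not damage the conclusion: the Newton-polygon segment ending at the vertex $(1,0)$ has horizontal length~$1$, so exactly one root (with multiplicity~$1$) has positive valuation, namely $F$, and its term contributes $F$ to the $y^{-2}$ coefficient; every other term $e_i/(y-F_i)$ with $v_t(F_i)\le 0$ expands in non-negative powers of $y$ regardless of $e_i$ (and if $p \mid e_i$ the term simply vanishes).  With these adjustments your proof goes through.
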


The conditions $P(0, 0) = 0$ and $\frac{\partial P}{\partial y}(0, 0) \neq 0$ guarantee that every monomial in $P(x y, y)$ is divisible by $y$ and that
\begin{equation}\label{equation: Furstenberg}
	\frac{
		y \frac{\partial P}{\partial y}(x y, y)
	}{
		P(x y, y) / y
	}
\end{equation}
has a unique power series expansion.

Now applying a Cartier operator to the diagonal of a rational power series produces another diagonal of a rational power series, namely
\begin{equation}\label{emulation}
	\Lambda_r \mathcal{D}\!\paren{\frac{S}{Q}}
	= \mathcal{D} \Lambda_{r, r}\!\paren{\frac{S}{Q}}
	= \mathcal{D} \Lambda_{r, r}\!\paren{\frac{S Q^{q - 1}}{Q^q}}
	= \mathcal{D}\!\paren{\frac{\Lambda_{r, r}\!\paren{S Q^{q - 1}}}{Q}},
\end{equation}
where the last equality follows from Proposition~\ref{Cartier}.
Since the initial and final rational series in Equation~\eqref{emulation} have the same denominator $Q$, every sequence in the $q$-kernel of $a(n)_{n \geq 0}$, and hence every state in the automaton, is the diagonal of a rational function with denominator $Q$.
Therefore we can represent each state simply by its numerator, and the map $S \mapsto \Lambda_{r, r}\!\paren{S Q^{q - 1}}$ on $\field[x, y]$ emulates the Cartier operator $\Lambda_r$ on $\field\doublebracket{x}$.
Moreover, the common denominator $Q$ is the denominator of the rational expression corresponding to $a(n)_{n \geq 0}$ itself, which is $P(x y, y) / y$ by Theorem~\ref{Furstenberg}.
This is the approach taken elsewhere~\cite{Denef--Lipshitz, Adamczewski--Bell diagonalization, Rowland--Yassawi}.

However, in this article we shear the bivariate series~\eqref{equation: Furstenberg} by replacing $x$ with $x y^{-1}$, obtaining
\begin{equation}\label{equation: simple Furstenberg}
	\frac{
		y \frac{\partial P}{\partial y}
	}{
		P/y
	}
\end{equation}
instead.
The diagonal of~\eqref{equation: Furstenberg} is the $y^0$ row of~\eqref{equation: simple Furstenberg}; that is, for each $n \geq 0$, the coefficient of $x^n y^n$ in~\eqref{equation: Furstenberg} is the coefficient of $x^n y^0$ in~\eqref{equation: simple Furstenberg}.
The latter is significantly more convenient notationally for obtaining the desired bound.
Let $Q \colonequal P / y$ be the denominator.
Note that $Q$ is a polynomial in $x$, but it may be a Laurent polynomial in $y$ and not a polynomial.
This will not cause us trouble, but we mention that, to expand~\eqref{equation: simple Furstenberg} as a series and get the intended sequence of coefficients of $x^n y^0$, we should expand using the constant term of the denominator $Q$ (since it is the same as the constant term of $P(x y, y) / y$) and not a monomial involving $y^{-1}$ if present.
The series expansion of~\eqref{equation: simple Furstenberg} is a power series in $x$ but may have terms involving $y^n$ with negative $n$.

In this formulation, the diagonal operator is replaced by the \emph{center row operator} $\mathcal{C}$, defined by
\[
	\mathcal{C}\!\paren{\sum_{m \geq 0} \sum_{n \in \Z} a(m, n) x^m y^n}
	= \sum_{m \geq 0} a(m, 0) x^m.
\]
Equation~\eqref{emulation} becomes
\[
	\Lambda_r \mathcal{C}\!\paren{\frac{S}{Q}}
	= \mathcal{C} \Lambda_{r, 0}\!\paren{\frac{S}{Q}}
	= \mathcal{C} \Lambda_{r, 0}\!\paren{\frac{S Q^{q - 1}}{Q^q}}
	= \mathcal{C}\!\paren{\frac{\Lambda_{r, 0}\!\paren{S Q^{q - 1}}}{Q}}.
\]
Therefore the map $S \mapsto \Lambda_{r, 0}\!\paren{S Q^{q - 1}}$ on $\field[x, y]$ emulates the Cartier operator $\Lambda_r$ on $\field\doublebracket{x}$.
We represent each state in the automaton by a polynomial $S \in \field[x, y]$.
The initial state is $S_0 \colonequal y \frac{\partial P}{\partial y}$, since this is the numerator of the rational series corresponding to $a(n)_{n \geq 0}$.
From each state $S$, upon reading $r \in \{0, 1, \dots, q - 1\}$ we transition to $\Lambda_{r, 0}\!\paren{S Q^{q - 1}}$.
The output assigned to each state $S$ is $\frac{S(0, 0)}{Q(0,0)}$.
\begin{remark}
If $r = 0$, then the output assigned to the state $S$ is the same as the output assigned to $\Lambda_{0, 0}\!\paren{S Q^{q - 1}}$ since the constant term of $Q^{q - 1}$ is $1$ by the assumption $\frac{\partial P}{\partial y}(0, 0) \neq 0$.
Therefore, the constructed automaton is not sensitive to leading $0$s.
\end{remark}

We solidify our notation as follows.

\begin{notation*}
For the remainder of the article, we fix a prime power $q$ and a polynomial $P \in \field[x, y]$ with height~$h\geq 1$ and degree~$d\geq 1$.
We assume that $P(0, 0) = 0$ and $\frac{\partial P}{\partial y}(0, 0) \neq 0$ so that we obtain the Furstenberg series $F \in \field\doublebracket{x}$ given by Theorem~\ref{Furstenberg}.
Let $Q = P/ y$.
For each $r \in \{0, 1, \dots, q - 1\}$ and each $S \in \field[x, y]$, define
\[
	\lambda_{r, 0}(S)
	\colonequal
	\Lambda_{r, 0}\!\paren{S Q^{q - 1}}.
\]
Note that $\lambda_{r, 0}$ depends on $Q$, even though the notation does not reflect this.
Let $W$ be the $\field$-vector space defined by
\[
	W \colonequal \left\langle x^i y^j : \text{$0 \leq i \leq h - 1$ and $0 \leq j \leq d - 1$}\right\rangle.
\]
We will always use this basis of $W$.
We have $\dim W = h d$, so $\size{W} = q^{h d}$.
\end{notation*}

\begin{example}\label{example}
Let $q = 3$, and consider the polynomial
\[
	P = (x^2 + x + 2) y^4 + x y^3 + (2 x + 1) y^2 + (x^2 + 1) y + 2 x^2 + x \in \F_3[x, y]
\]
with height~$h = 2$ and degree~$d = 4$.
We will use this polynomial as a running example throughout the paper.
The coefficient sequence $a(n)_{n \geq 0}$ of the series $F \in \F_3\doublebracket{x}$ satisfying $P(x, F) = 0$ is
\[
	0, 2, 0, 2, 0, 2, 0, 0, 1, 0, 0, 1, 1, 1, 1, 1, 2, 1, 1, 2, 0, 0, 2, 2, 1, 0, 1, \dots.
\]
We have
\[
	Q = P/ y = (x^2 + x + 2) y^3 + x y^2 + (2 x + 1) y + x^2 + 1 + (2 x^2 + x) y^{-1}.
\]
The initial state is
\[
	S_0
	= y \tfrac{\partial P}{\partial y}
	= (x^2 + x + 2) y^4 + (x + 2) y^2 + (x^2 + 1) y.
\]
The space $W$ consists of all bivariate polynomials with height at most $1$ and degree at most $3$.
\end{example}

In the remainder of this section, we use elementary methods to highlight the relevance of $W$, leading us to a preliminary bound on the size of the kernel in Corollary~\ref{kernel preliminary upper bound}.

Proposition~\ref{module} shows that $W$ is closed under $\lambda_{r, 0}$.
In particular, even though $Q$ is possibly a Laurent polynomial, $\lambda_{r, 0}(S)$ is a polynomial for each $S\in W$.

\begin{proposition}\label{module}
For each $r \in \{0, 1, \dots, q - 1\}$, we have $\lambda_{r, 0}(W) \subseteq W$.
\end{proposition}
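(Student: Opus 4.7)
The proof is a direct bidegree count, and I would carry it out in three short steps.

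First, I would record the shape of $Q = P/y$. Writing $P = \sum_{j=0}^{d} p_j(x)\, y^j$ with $p_j \in \field[x]$ of degree at most $h$, the Laurent polynomial $Q$ has $y$-support contained in $\{-1, 0, \dots, d-1\}$ and $x$-degree at most $h$. Raising to the power $q-1$ gives $y$-support in $\{-(q-1), \dots, (d-1)(q-1)\}$ and $x$-degree at most $h(q-1)$.

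Second, I would multiply by an arbitrary $S \in W$. Combining the bounds on $S$ and on $Q^{q-1}$, the product $T \colonequal S Q^{q-1}$ has $x$-degree at most $(h-1) + h(q-1) = hq-1$, and $y$-support contained in $\{-(q-1), \dots, (d-1)q\}$.

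Third, I would extract the monomials of $T$ that survive $\Lambda_{r,0}$. A monomial $x^m y^n$ of $T$ contributes $x^{(m-r)/q} y^{n/q}$ exactly when $m \equiv r \pmod q$ and $n \equiv 0 \pmod q$. From $m \leq hq-1$ and $m \equiv r \pmod q$, we have $(m-r)/q \leq (hq-1-r)/q < h$, so the new $x$-exponent lies in $\{0, 1, \dots, h-1\}$. The multiples of $q$ lying in $\{-(q-1), \dots, (d-1)q\}$ are exactly $0, q, 2q, \dots, (d-1)q$, so the new $y$-exponent lies in $\{0, 1, \dots, d-1\}$. This shows $\lambda_{r,0}(S) \in W$.

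I do not expect any real obstacle here. The only point requiring a moment's thought is that although the $y^{-1}$ term of $Q$ does introduce monomials of negative $y$-degree into $T$, none of those degrees is a multiple of $q$: the next negative multiple after $0$ is $-q$, which lies outside the support window $[-(q-1), (d-1)q]$. Hence $\Lambda_{r,0}$ harmlessly annihilates every negative-degree contribution, and the bidegree count closes cleanly. This is essentially what the shear from $P(xy, y)/y$ to $P/y$ in the definition of $Q$ was designed to buy.
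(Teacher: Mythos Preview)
Your proof is correct and follows essentially the same approach as the paper: a direct bidegree count on $S Q^{q-1}$ followed by the observation that $\Lambda_{r,0}$ divides exponents by $q$. The paper phrases the argument monomial-by-monomial (a basis element $x^i y^j$ of $W$ times a monomial $x^I y^J$ of $Q^{q-1}$) and uses explicit floor/ceiling manipulations, whereas you bound the full product $T = S Q^{q-1}$ at once, but the content is identical.
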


\begin{proof}
Let $c \, x^I y^J$ be a nonzero monomial in the Laurent polynomial $Q^{q - 1}$.
The height $I$ of this monomial satisfies $0 \leq I \leq (q - 1) h$, and the degree $J$ satisfies $-(q - 1) \leq J \leq (q - 1) (d - 1)$.
To show that $\lambda_{r, 0}(W) \subseteq W$, by linearity it suffices to show that if $x^i y^j$ is an element of the basis of $W$ then $\Lambda_{r, 0}\!\paren{x^i y^j \cdot x^I y^J} \in W$.
One computes
\[
	\Lambda_{r, 0}\!\paren{x^i y^j \cdot x^I y^J}
	= \begin{cases}
		x^{\frac{i + I - r}{q}} y^{\frac{j + J}{q}}	& \text{if $i+I \equiv r \bmod q$ and $j+J \equiv 0 \bmod q$}\\
		0									& \text{otherwise}.
	\end{cases}
\]
In the second case, clearly the monomial $0$ belongs to $W$.
In the first case, the height of this monomial satisfies $\frac{i + I - r}{q}\ge 0$.
We use the fact that $\frac{i + I - r}{q}$ and $\frac{j + J}{q}$ are integers.
Then
\begin{equation}\label{image height}
	\tfrac{i + I - r}{q}
	= \floor{\tfrac{i + I - r}{q}}
	\le \floor{\tfrac{(h - 1) + (q - 1) h - r}{q}}
	= \floor{\tfrac{q h - r - 1}{q}}
	\le h - 1.
\end{equation}
The degree of $\Lambda_{r, 0}\!\paren{x^i y^j \cdot x^I y^J}$ satisfies
$
	\frac{j + J}{q}
	\leq \frac{(d - 1) + (q - 1) (d - 1)}{q}
	= d - 1
$
and
\[
	\tfrac{j + J}{q}
	= \ceil{\tfrac{j + J}{q}}
	\geq \ceil{\tfrac{0 - (q - 1)}{q}}
	= \ceil{\tfrac{1}{q} - 1}
	= 0.
\]
Consequently $\Lambda_{r, 0}\!\paren{x^i y^j \cdot x^I y^J} \in W$.
\end{proof}

The initial state $y \frac{\partial P}{\partial y}$ has degree at most $d$.
Since elements of $W$ have degree at most $d - 1$, the initial state is not necessarily an element of $W$.
However, the following result shows that most of its images under compositions of $\lambda_{r, 0}$ are elements of $W$.
A similar result was obtained in \cite[Remark~2.6]{Bostan--Caruso--Christol--Dumas}.

\begin{proposition}\label{transient}
Let $S \in \field[x, y]$ such that $\deg_x S \leq h$ and $\deg_y S \leq d$.
\begin{itemize}
\item We have $\deg_x \lambda_{0, 0}(S) \leq h$ and $\deg_y \lambda_{0, 0}(S) \leq d - 1$.
\item For each $r \in \{1, \dots, q - 1\}$, we have $\lambda_{r, 0}(S) \in W$.
\end{itemize}
In particular, every polynomial $(\lambda_{r_n, 0} \circ \dots \circ \lambda_{r_2, 0} \circ \lambda_{r_1, 0})(S_0)$, where at least one $r_i$ is not $0$, is an element of $W$.
\end{proposition}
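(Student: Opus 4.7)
The plan is to track $x$- and $y$-degree bounds through the multiplication by $Q^{q-1}$ followed by $\Lambda_{r,0}$, essentially repeating the bookkeeping in the proof of Proposition~\ref{module} but starting from the slightly larger box where $\deg_y S$ is allowed to reach $d$ (instead of $d-1$). By linearity it suffices to consider a single monomial $x^i y^j$ in $S$ with $0 \leq i \leq h$ and $0 \leq j \leq d$, and a single monomial $c\, x^I y^J$ in the Laurent polynomial $Q^{q-1}$, where $0 \leq I \leq (q-1)h$ and $-(q-1) \leq J \leq (q-1)(d-1)$.

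For the $y$-degree, the condition $j + J \equiv 0 \pmod q$ combined with the range $-(q-1) \leq j + J \leq d + (q-1)(d-1) = qd - q + 1$ forces $(j+J)/q$ to be an integer in $[0, d-1]$, exactly as in the proof of Proposition~\ref{module}; the extra slack provided by $j \leq d$ is absorbed because $qd - q + 1 < q\,d$. For the $x$-degree, the condition $i + I \equiv r \pmod q$ together with $0 \leq i + I \leq qh$ gives
\[
  0 \;\leq\; \frac{i + I - r}{q} \;\leq\; \left\lfloor \frac{qh - r}{q} \right\rfloor,
\]
which equals $h$ when $r = 0$ and $h - 1$ when $1 \leq r \leq q - 1$ (since then $qh - r$ is not a multiple of $q$ and the largest multiple of $q$ at most $qh - r$ is $q(h-1)$). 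This yields both bullets: $\deg_x \lambda_{0,0}(S) \leq h$ and $\deg_y \lambda_{0,0}(S) \leq d - 1$, while for $r \geq 1$ the image $\lambda_{r,0}(S)$ has $x$-degree at most $h - 1$ and $y$-degree at most $d-1$, hence lies in $W$.

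For the ``in particular'' statement I would induct on the length of the composition. Write $S_i := (\lambda_{r_i, 0} \circ \dots \circ \lambda_{r_1, 0})(S_0)$. The initial state $S_0 = y\,\frac{\partial P}{\partial y}$ satisfies $\deg_x S_0 \leq h$ and $\deg_y S_0 \leq d$, so it lies in the enlarged box covered by the hypothesis of the proposition. Let $k$ be the smallest index with $r_k \neq 0$. For $i < k$ the first bullet, applied inductively, shows that $S_i$ remains in this enlarged box (with $y$-degree in fact dropping to $d-1$ after one step). The second bullet then places $S_k = \lambda_{r_k,0}(S_{k-1})$ into $W$, and each subsequent $\lambda_{r,0}$ preserves $W$ by Proposition~\ref{module}, so $S_n \in W$.

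The only delicate point is the case split $r=0$ versus $r \geq 1$ in the $x$-degree floor computation; this is what drives the distinction between the two bullets and makes the composition argument go through. Everything else is routine monomial-level degree tracking, so I do not expect a real obstacle.
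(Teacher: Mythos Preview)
Your proposal is correct and follows essentially the same approach as the paper: both arguments track $x$- and $y$-degree bounds monomial by monomial, reusing the bookkeeping from Proposition~\ref{module} with the starting box enlarged to allow $\deg_y S \leq d$, and both hinge on the same case split $r=0$ versus $r \geq 1$ in the floor computation $\lfloor (qh - r)/q \rfloor$. Your handling of the ``in particular'' clause via an explicit induction on the position of the first nonzero $r_k$ is slightly more detailed than the paper's one-line conclusion, but the content is identical.
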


\begin{proof}
For the first statement, we follow the proof of Proposition~\ref{module}.
After setting $r = 0$, Equation~\eqref{image height} is replaced with
\[
	\tfrac{i + I}{q}
	= \floor{\tfrac{i + I}{q}}
	\le \floor{\tfrac{h + (q - 1) h}{q}}
	= \floor{\tfrac{q h}{q}}
	= h.
\]
Therefore $\deg_x S \leq h$.
Similarly, in the case that $\Lambda_{0, 0}\!\paren{x^i y^j \cdot x^I y^J}$ is not $0$, its degree satisfies
\[
	\tfrac{j + J}{q}
	= \floor{\tfrac{j + J}{q}}
	\leq \floor{\tfrac{d + (q - 1) (d - 1)}{q}}
	= \floor{\tfrac{q (d - 1) + 1}{q}}
	= d - 1.
\]

For the second statement, we also follow the proof of Proposition~\ref{module};
Equation~\eqref{image height} is replaced with
\[
	\tfrac{i + I - r}{q}
	= \floor{\tfrac{i + I - r}{q}}
	\le \floor{\tfrac{h + (q - 1) h - r}{q}}
	= \floor{\tfrac{q h - r}{q}}
	\leq h - 1,
\]
and analogously
\[
	\tfrac{j + J}{q}
	= \floor{\tfrac{j + J}{q}}
	\leq \floor{\tfrac{d + (q - 1) (d - 1) - r}{q}}
	= \floor{\tfrac{q (d - 1) + 1 - r}{q}}
	\leq d - 1.
\]

The initial state $S_0 = y \frac{\partial P}{\partial y}$ satisfies $\deg_x S_0 \leq h$ and $\deg_y S_0 \leq d$.
Let $r \in \{1, \dots, q - 1\}$.
Applying both statements, we obtain $(\lambda_{r, 0} \circ \lambda_{0, 0}^n)(S_0) \in W$ for all $n \geq 1$.
Therefore, by Proposition~\ref{module}, the final statement follows.
\end{proof}

An immediate corollary of Propositions~\ref{module} and \ref{transient} is the following, since all states $S$ except the initial state satisfy $\deg_x S\leq h$ and $\deg_y S\leq d-1$.

\begin{corollary}\label{kernel initial upper bound}
Let $F = \sum_{n \geq 0} a(n) x^n \in \field\doublebracket{x}$ be the Furstenberg series associated with a polynomial $P \in \field[x, y]$ of height $h$ and degree $d$.
Then
\[
	\size{\ker_q(a(n)_{n\geq 0})}
	\leq q^{(h+1) d} + 1.
\]
\end{corollary}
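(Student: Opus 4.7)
The plan is to combine Propositions~\ref{module} and~\ref{transient} by identifying a single finite-dimensional space, slightly larger than $W$, that contains every polynomial representing a state of the automaton except possibly the initial one. Define the auxiliary space
\[
	V \colonequal \langle x^i y^j : 0 \le i \le h,\ 0 \le j \le d - 1 \rangle,
\]
so that $W \subseteq V$ and $|V| = q^{(h+1) d}$. The first step is to verify that $V$ is closed under every $\lambda_{r, 0}$. For $r = 0$ this is precisely the first bullet of Proposition~\ref{transient}: any $S$ with $\deg_x S \le h$ and $\deg_y S \le d$ (in particular any element of $V$) satisfies $\deg_x \lambda_{0, 0}(S) \le h$ and $\deg_y \lambda_{0, 0}(S) \le d - 1$, so $\lambda_{0, 0}(S) \in V$. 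For $r \neq 0$ the second bullet of the same proposition gives $\lambda_{r, 0}(V) \subseteq W \subseteq V$.

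The second step is to apply Proposition~\ref{transient} to $S_0 = y \frac{\partial P}{\partial y}$ itself. Since $\deg_x S_0 \le h$ and $\deg_y S_0 \le d$, the proposition shows that $\lambda_{r, 0}(S_0) \in V$ for every $r \in \{0, 1, \dots, q - 1\}$. Together with the stability of $V$ established above, every polynomial obtained from $S_0$ by a nonempty sequence of $\lambda_{r, 0}$'s lies in $V$. Recalling that the map $S \mapsto \mathcal{C}(S/Q)$ sends these polynomials onto the kernel $\ker_q(a(n)_{n \ge 0})$, the number of kernel elements is at most the number of distinct polynomials that arise, which is at most $|V| + 1 = q^{(h + 1) d} + 1$, the $+1$ accounting for the possibility that $S_0$ itself has $\deg_y S_0 = d$ and therefore does not lie in $V$.

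There is no real obstacle: the substantive work is already contained in Propositions~\ref{module} and~\ref{transient}, so the proof of the corollary is essentially a packaging argument. The only judgment call is to enlarge $W$ to $V$ by relaxing the bound on the $x$-degree from $h - 1$ to $h$, just enough so that a single $\lambda_{0, 0}$-step from $S_0$ already lands in a space that is closed under all subsequent transitions. Bringing the bound down from $q^{(h+1)d}+1$ to $q^{hd}$ plus lower-order terms, as in Theorem~\ref{kernel size upper bound}, is exactly the task deferred to the later sections, where one must control the $\lambda_{0, 0}$-orbit inside the ``extra'' slice $V \setminus W$.
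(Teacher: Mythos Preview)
Your proof is correct and matches the paper's intended argument: the paper states this corollary as ``an immediate corollary of Propositions~\ref{module} and~\ref{transient}'', and you have simply made the implicit space $V$ explicit (the paper introduces the same $V$ a few lines later in Section~\ref{section: structure}) and verified its closure under all $\lambda_{r,0}$ directly from Proposition~\ref{transient}. The only cosmetic difference is that you bypass Proposition~\ref{module} by using both bullets of Proposition~\ref{transient} to get closure of $V$, whereas the paper's one-line justification relies on the final statement of Proposition~\ref{transient}, which in turn invokes Proposition~\ref{module}.
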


Proposition~\ref{transient} indicates that we must further study $\lambda_{0, 0}$ to lower the bound in Corollary~\ref{kernel initial upper bound}.
We start to do this next.
For a function $f \colon X \to X$, define the \emph{orbit} of $S \in X$ under $f$ to be the sequence $S, f(S), f^2(S), \dots$, and let $\size{\orb_f(S)}$ be the number of distinct terms in the orbit.

\begin{corollary}\label{kernel preliminary upper bound}
Let $F = \sum_{n \geq 0} a(n) x^n \in \field\doublebracket{x}$ be the Furstenberg series associated with a polynomial $P \in \field[x, y]$ of height $h$ and degree $d$.
Then
\[
	\size{\ker_q(a(n)_{n\geq 0})}
	\leq q^{h d} + \size{\orb_{\Lambda_0}(F)}.
\]
\end{corollary}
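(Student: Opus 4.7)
The plan is to combine Proposition~\ref{transient} with the emulation relation in Equation~\eqref{emulation}, being careful to count distinct kernel sequences (power series) rather than distinct polynomials. Recall that every element of $\ker_q(a(n)_{n \geq 0})$ is obtained by applying some composition $\Lambda_{r_e} \circ \cdots \circ \Lambda_{r_1}$ of Cartier operators to $F$, and that the iterated emulation argument preceding the notation box shows that this equals $\mathcal{C}(S/Q)$ for $S = (\lambda_{r_e, 0} \circ \cdots \circ \lambda_{r_1, 0})(S_0)$. Since $\mathcal{C}(S_0/Q) = F$ as well, the kernel is exactly the image of the set of polynomial states under the map $\phi \colon S \mapsto \mathcal{C}(S/Q)$.

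By Proposition~\ref{transient}, every iterate $(\lambda_{r_n, 0} \circ \cdots \circ \lambda_{r_1, 0})(S_0)$ in which at least one $r_i$ is nonzero lies in $W$. Consequently, the set of polynomial states is contained in $W \cup \orb_{\lambda_{0, 0}}(S_0)$. The image $\phi(W)$ contributes at most $\size{W} = q^{h d}$ distinct power series to the kernel, accounting for the main term.

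The key observation that yields the remaining $\size{\orb_{\Lambda_0}(F)}$ term — rather than the potentially much larger $\size{\orb_{\lambda_{0, 0}}(S_0)}$ — is that $\phi$ is generally not injective. Specializing Equation~\eqref{emulation} to $r = 0$ shows that $\phi$ intertwines $\lambda_{0, 0}$ with $\Lambda_0$, so that $\phi(\lambda_{0, 0}^n(S_0)) = \Lambda_0^n(F)$ for every $n \geq 0$. Therefore $\phi$ maps $\orb_{\lambda_{0, 0}}(S_0)$ surjectively onto $\orb_{\Lambda_0}(F)$, and its image has exactly $\size{\orb_{\Lambda_0}(F)}$ elements. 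Adding the two contributions yields the stated bound.

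There is no serious obstacle here; the corollary is essentially a bookkeeping consequence of combining Proposition~\ref{transient} with Equation~\eqref{emulation}. The only conceptual point worth emphasizing is the choice to count in the power series world rather than at the polynomial level, which is precisely what replaces $\size{\orb_{\lambda_{0, 0}}(S_0)}$ by $\size{\orb_{\Lambda_0}(F)}$ and thereby strengthens Corollary~\ref{kernel initial upper bound}; reducing the latter quantity is the substantive work to be done in the later sections.
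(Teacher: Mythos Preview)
Your argument is correct and follows the same route as the paper: split the polynomial states into $\orb_{\lambda_{0,0}}(S_0)$ and those lying in $W$ via Proposition~\ref{transient}, then count the kernel elements arising from each. The one place you are actually more careful than the paper is in passing from $\orb_{\lambda_{0,0}}(S_0)$ to $\orb_{\Lambda_0}(F)$: the paper asserts $\size{\orb_{\lambda_{0,0}}(S_0)} = \size{\orb_{\Lambda_0}(F)}$ ``by definition,'' whereas you correctly observe that only the surjection $\phi(\orb_{\lambda_{0,0}}(S_0)) = \orb_{\Lambda_0}(F)$ is immediate from the intertwining relation, and that this already suffices for the bound.
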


\begin{proof}
Recall that each sequence in $\ker_q(a(n)_{n\geq 0})$ is represented by at least one polynomial obtained by iteratively applying some sequence of the operators $\lambda_{r, 0}$ to the initial state $S_0 = y\tfrac{\partial P}{\partial y}$.
Applying $\lambda_{0, 0}$ iteratively to $S_0$ produces $\size{\orb_{\lambda_{0, 0}}(S_0)}$ states, and $\size{\orb_{\lambda_{0, 0}}(S_0)} = \size{\orb_{\Lambda_0}(F)}$ by definition.
By Propositions~\ref{module} and \ref{transient}, all states that are not in $\orb_{\lambda_{0, 0}}(S_0)$ are in $W$, which has size $q^{h d}$.
\end{proof}

\section{Structure of the linear transformation $\lambda_{0, 0}$}\label{section: structure}

By Corollary~\ref{kernel preliminary upper bound}, it remains to bound $\size{\orb_{\Lambda_0}(F)}$.
In this section, we take the first step toward this goal by identifying univariate operators $\lambda_0$ that emulate $\lambda_{0, 0}$ on three subspaces.
The main result is Proposition~\ref{univariate emulation}.

We continue to use the notation $h$, $d$, $P$, $Q$, $W$ established in the previous section.
As we saw with regard to Proposition~\ref{transient}, the elements of $\orb_{\lambda_{0, 0}}(S_0)$ do not necessarily belong to $W$.
However, they do belong to the slightly larger space
\[
	V \colonequal \left\langle x^i y^j : \text{$0 \leq i \leq h$ and $0 \leq j \leq d - 1$}\right\rangle.
\]
We define three subspaces of $V$, which we label suggestively using $\ell$~(left), r~(right), and t~(top):
\begin{align*}
	V\subl &= \left\langle x^0 y^j : 0 \leq j \leq d - 1 \right\rangle \\
	V\subr &= \left\langle x^h y^j : 0 \leq j \leq d - 1 \right\rangle \\
	V\subt &= \left\langle x^i y^{d - 1} : 0 \leq i \leq h \right\rangle.
\end{align*}
We also define the \emph{interior} of $V$ to be
\begin{equation}\label{V interior definition}
	V^\circ = \left\langle x^i y^j : \text{$1 \leq i \leq h - 1$ and $0 \leq j \leq d - 2$}\right\rangle.
\end{equation}
Note that, despite the name ``interior'', the basis of $V^\circ$ contains monomials $x^i y^0$ along the bottom edge of the rectangle.
We have $V^\circ \cap V\subl = V^\circ \cap V\subr = V^\circ \cap V\subt = \{0\}$.
We will see that the factor $q^{(h - 1) (d - 1)}$ in Theorem~\ref{kernel size upper bound} comes from the size of $V^\circ$.

To establish the structure of $\lambda_{0, 0}$, we introduce three projection-like maps.

\begin{notation*}
Let $\pi\subl \colon \field[x,y] \to \field[y]$ denote the projection map from $\field[x,y]$ to $V\subl$.
We define $\pi\subr$ slightly differently.
We have $V\subr \subset x^h \field[y]$, so rather than projecting to $V\subr$ we will dispense with the factor $x^h$.
Namely, define $\pi\subr \colon \field[x,y] \to \field[y]$ by $\pi\subr(S) = \frac{1}{x^h} \rho(S)$, where $\rho$ projects from $\field[x,y]$ to $V\subr$.
Similarly, define $\pi\subt \colon \field[x,y] \to \field[x]$ by $\pi\subt(S) = \frac{1}{y^{d - 1}} \rho(S)$, where $\rho$ projects from $\field[x,y]$ to $V\subt$.
\end{notation*}

\begin{example}\label{example projections}
As in Example~\ref{example}, let $q = 3$ and
\begin{align*}
	Q &= (x^2 + x + 2) y^3 + x y^2 + (2 x + 1) y + x^2 + 1 + (2 x^2 + x) y^{-1} \\
	S_0
	&= (x^2 + x + 2) y^4 + (x + 2) y^2 + (x^2 + 1) y.
\end{align*}
The second state in the orbit of $S_0$ under $\lambda_{0, 0}$ is
\[
	S_1
	\colonequal \lambda_{0, 0}(S_0)
	= \Lambda_{0, 0}(S_0 Q^{3 - 1})
	= x y^3 + (x^2 + x + 1) y^2 + (2 x^2 + 2) y + x^2 + x.
\]
We have $S_1 \in V$, which is consistent with Proposition~\ref{transient}.
Since $h = 2$ and $d = 4$, the images of $S_1$ under $\pi\subl, \pi\subr, \pi\subt$ are
\begin{align*}
	\pi\subl(S_1) &= y^2 + 2 y \\
	\pi\subr(S_1) &= y^2 + 2 y + 1 \\
	\pi\subt(S_1) &= x.
\end{align*}
We use these projections in Example~\ref{example denominator projections} below.
\end{example}

Next we define univariate versions of $\lambda_{0,0}$.
We will use the symbol $z$ to denote either $x$ or $y$, depending on which subspace we are considering.

\begin{notation*}
Let $R \in z^{-1} \field[z]$.
Define $\lambda_0 \colon \field[z] \to \field[z]$ by
\begin{equation}\label{univariate little lambda definition}
	\lambda_0(S) = \Lambda_0\!\paren{S R^{q - 1}}.
\end{equation}
\end{notation*}

The next proposition shows that $\lambda_0$ emulates $\lambda_{0, 0}$ on the subspaces $V\subl$, $V\subr$, and $V\subt$.
Each of the three statements describes a commuting diagram.
For example, the first statement says that the diagram
\begin{center}\begin{tikzcd}
	\field[x,y] \arrow[r, "\lambda_{0,0}"]\arrow[d,"\pi\subl"'] & \field[x,y]\arrow[d,"\pi\subl"] \\
	\field[y] \arrow[r,"\lambda_0"] & \field[y]
\end{tikzcd}\end{center}
commutes.
Write
\begin{equation}\label{equation: P form}
	P(x,y) = \sum_{i = 0}^h x^i A_i(y) = \sum_{j = 0}^d B_j(x) y^j.
\end{equation}
Note that $A_0/y$ is a polynomial since we assume $P(0,0) = 0$ for a Furstenberg series.

\begin{proposition}\label{univariate emulation}
We have the following.

\begin{enumerate}
\item\label{left border}
Let $R = A_0/y$.
For all $S \in \field[x,y]$,
\[
	\pi\subl(\lambda_{0, 0}(S))
	= \lambda_0(\pi\subl(S)).
\]
\item\label{right border}
Let $R = A_h/y$.
For all $S \in \field[x,y]$ with height at most $h$,
\[
	\pi\subr(\lambda_{0, 0}(S))
	= \lambda_0(\pi\subr(S)).
\]
In particular, $\lambda_0(\pi\subr(S))$ is a polynomial despite $R$ not necessarily being a polynomial.
\item\label{top border}
Let $R = B_d$.
For all $S \in \field[x,y]$ with degree at most $d - 1$,
\[
	\pi\subt(\lambda_{0, 0}(S))
	= \lambda_0(\pi\subt(S)).
\]
\end{enumerate}
\end{proposition}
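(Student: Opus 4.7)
The plan is to verify each of the three identities by tracing through the definition of $\lambda_{0, 0}(S) = \Lambda_{0, 0}(SQ^{q - 1})$ and extracting the single coefficient of $SQ^{q - 1}$ that contributes to the projection $\pi\subl$, $\pi\subr$, or $\pi\subt$ of the output. The unifying observation is that each projection isolates an extremal coefficient in $x$ or $y$: $\pi\subl(f) = f(0, y)$ is the $x^0$ coefficient, $\pi\subr(f)$ is the $x^h$ coefficient of $f$ regarded as a polynomial in $y$, and $\pi\subt(f)$ is the $y^{d - 1}$ coefficient of $f$ regarded as a polynomial in $x$. Because $\Lambda_{0, 0}$ factors into independent Cartier actions in $x$ and $y$, extracting the $x^0$ or $x^h$ coefficient of $\Lambda_{0, 0}(g)$ equals $\Lambda_0$ (in $y$) applied to the $x^0$ or $x^{qh}$ coefficient of $g$ respectively, and symmetrically for the $y^{d - 1}$ coefficient.

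For part~(\ref{left border}), setting $x = 0$ in $SQ^{q - 1}$ yields $\pi\subl(S) \cdot Q(0, y)^{q - 1}$, and $Q(0, y) = P(0, y)/y = A_0/y = R$. Applying the intertwining above gives $\pi\subl(\lambda_{0, 0}(S)) = \Lambda_0(\pi\subl(S) R^{q - 1}) = \lambda_0(\pi\subl(S))$. For part~(\ref{right border}), when $\deg_x S \leq h$ the extremal $x^{qh}$ coefficient of $SQ^{q - 1}$ can only arise from pairing the $x^h$ part of $S$ with the $x^{(q - 1)h}$ part of $Q^{q - 1}$, because $\deg_x Q = h$ with leading coefficient $A_h/y$; this pairing equals $\pi\subr(S) \cdot (A_h/y)^{q - 1}$. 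Extracting the $x^h$ coefficient of $\Lambda_{0, 0}(SQ^{q - 1})$ then applies $\Lambda_0$ in $y$ to this product, producing $\lambda_0(\pi\subr(S))$ with $R = A_h/y$. Part~(\ref{top border}) is the mirror argument in $y$: the leading $y^{d - 1}$ coefficient of $Q$ is $B_d$, so the $y^{(q - 1)(d - 1)}$ coefficient of $Q^{q - 1}$ is $B_d^{q - 1}$; with $\deg_y S \leq d - 1$, the $y^{q(d - 1)}$ coefficient of $SQ^{q - 1}$ is $\pi\subt(S) \cdot B_d^{q - 1}$, and extracting the $y^{d - 1}$ coefficient of $\Lambda_{0, 0}(SQ^{q - 1})$ via $\Lambda_0$ in $x$ yields $\lambda_0(\pi\subt(S))$ with $R = B_d$.

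The argument is essentially bookkeeping rather than containing a substantive obstacle. The one point requiring care is that in parts~(\ref{right border}) and (\ref{top border}) the degree hypotheses on $S$ are precisely what rules out lower-order contributions to the extremal coefficient of $SQ^{q - 1}$, leaving only the product of a projection of $S$ with a power of a leading coefficient of $Q$. The parenthetical remark in part~(\ref{right border})—that $\lambda_0(\pi\subr(S))$ is a polynomial despite $R = A_h/y$ being a Laurent polynomial in $y$—then drops out: the $y$-exponents of $R^{q - 1}$ are at least $-(q - 1)$, so $\pi\subr(S) R^{q - 1}$ contains no monomial $y^m$ with $m$ a negative multiple of $q$, and $\Lambda_0$ therefore produces only nonnegative $y$-exponents.
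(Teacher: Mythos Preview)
Your proof is correct and follows essentially the same approach as the paper: both arguments identify each projection as extracting an extremal coefficient in $x$ or $y$, observe that this coefficient of $\Lambda_{0,0}(SQ^{q-1})$ is determined by a single coefficient of $SQ^{q-1}$, and then use the degree constraints on $S$ and $Q$ to factor that coefficient as a projection of $S$ times a power of the corresponding leading (or constant) coefficient of $Q$. Your framing in terms of ``$\Lambda_{0,0}$ factoring into independent Cartier actions'' is a slightly more compact way of expressing what the paper does by tracing monomials, but the substance is identical.
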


In particular, Proposition~\ref{univariate emulation} implies that the $V\subl$, $V\subr$, and $V\subt$ components of $\lambda_{0, 0}(S)$ depend only on the respective $V\subl$, $V\subr$, and $V\subt$ components of $S$.

\begin{example}\label{example denominator projections}
For the polynomial $P$ in Example~\ref{example}, we have
\begin{align*}
	A_0/y &= 2 y^3 + y + 1 \\
	A_h/y &= y^3 + 1 + 2y^{-1} \\
	B_d &= x^2 + x + 2.
\end{align*}
With these respective values of $R$, the second state $S_1$ in the orbit of $S_0$ under $\lambda_{0, 0}$, computed in Example~\ref{example projections}, satisfies
\begin{align*}
	\pi\subl(\lambda_{0, 0}(S_1)) &= y^2 + y = \lambda_0(\pi\subl(S_1)) \\
	\pi\subr(\lambda_{0, 0}(S_1)) &= y^2 + y + 1 = \lambda_0(\pi\subr(S_1)) \\
	\pi\subt(\lambda_{0, 0}(S_1)) &= 2 x = \lambda_0(\pi\subt(S_1)),
\end{align*}
confirming the statement of Proposition~\ref{univariate emulation}.
That is, Proposition~\ref{univariate emulation} reduces the computation of $\pi\subl(\lambda_{0, 0}(S_1))$ to the univariate computation of $\lambda_0(\pi\subl(S_1))$, and similarly for $\pi\subr$ and $\pi\subt$.
\end{example}

\begin{proof}[Proof of Proposition~\ref{univariate emulation}]
First we consider $\pi\subl(\lambda_{0, 0}(S))$ for $S \in \field[x,y]$.
Since $\pi\subl$ projects onto polynomials in $y$, we are interested in monomials with height~$0$ in $\lambda_{0, 0}(S) = \Lambda_{0, 0}\!\paren{S Q^{q - 1}}$.
A monomial $c \, x^0 y^J$ in $SQ^{q - 1}$ arises only from the product of a monomial in $S$ with height $0$ together with a monomial in $Q^{q - 1}$ with height~$0$, that is, only from the product of a monomial in $\pi\subl(S)$ together with a monomial in $Q^{q - 1}$ with height~$0$.
Therefore
\[
	\pi\subl(\lambda_{0, 0}(S))
	= \pi\subl(\lambda_{0, 0}(\pi\subl(S))).
\]
Additionally, the only way to get a monomial in $Q^{q - 1}$ with height~$0$ is to take a product of $q - 1$ monomials in $Q = P/y$ with height~$0$, namely, monomials in $A_0/y$.
Therefore,
\[
	\pi\subl(\lambda_{0, 0}(S))
	= \pi\subl\!\left(\Lambda_{0, 0}\!\paren{\pi\subl(S) \cdot (A_0/y)^{q - 1}}\right).
\]
Since $\pi\subl(S) \cdot (A_0/y)^{q - 1}$ is a univariate polynomial in $y$, we obtain
\[
	\pi\subl(\lambda_{0, 0}(S))
	= \Lambda_0\!\paren{\pi\subl(S) (A_0/y)^{q - 1}}
	= \lambda_0(\pi\subl(S)).
\]

The argument is similar for $\pi\subr(\lambda_{0, 0}(S))$.
Let $\deg_x S \leq h$.
We have $\pi\subr(x^I y^J) = 0$ if $I \neq h$.
Since $\deg_{x} Q = h$, each monomial $c \, x^{q h} y^J$ in each of $S\cdot Q^{q - 1}$ and $x^h \pi\subr(S) \cdot Q^{q - 1}$ arises only from the product of a monomial in $x^h \pi\subr(S)$ together with a product of $q - 1$ monomials in $Q$ with height~$h$, namely, monomials in $x^h A_h/y$.
Therefore
\begin{align*}
	\pi\subr(\lambda_{0, 0}(S))
	= \pi\subr(\lambda_{0, 0}(x^h \pi\subr(S)))
	&= \pi\subr\!\left(\Lambda_{0, 0}\!\paren{x^h \pi\subr(S) \cdot (x^h A_h/y)^{q - 1}}\right) \\
	&= \pi\subr\!\left(x^h \Lambda_{0, 0}\!\paren{\pi\subr(S) (A_h/y)^{q - 1}}\right) \\
	&= \Lambda_0\!\paren{\pi\subr(S) (A_h/y)^{q - 1}} \\
	&= \lambda_0(\pi\subr(S)),
\end{align*}
where in the third equality we use Proposition~\ref{Cartier} to rewrite $\Lambda_{0, 0}(G x^{h q}) = x^h \Lambda_{0, 0}(G)$.
Moreover, $\lambda_0(\pi\subr(S))$ is a polynomial since monomials in $\pi\subr(S)$ have degree at least $0$ and monomials in $(A_h/y)^{q - 1}$ have degree at least $-(q - 1)$.

Finally, we consider $\pi\subt(\lambda_{0, 0}(S))$ for $\deg_y S \leq d - 1$.
We have $\pi\subt(x^I y^J) = 0$ if $J \neq d - 1$.
Since $\deg_{y} Q = d - 1$, each monomial $c \, x^I y^{q (d - 1)}$ in each of $S\cdot Q^{q - 1}$ and $\pi\subt(S) y^{d - 1} \cdot Q^{q - 1}$ arises only from the product of a monomial in $\pi\subt(S) y^{d - 1}$ together with a product of $q - 1$ monomials in $Q$ with degree~$d - 1$, namely, monomials in $B_d y^{d - 1}$.
Therefore
\begin{align*}
	\pi\subt(\lambda_{0, 0}(S))
	= \pi\subt(\lambda_{0, 0}(\pi\subt(S) y^{d - 1}))
	&= \pi\subt\!\left(\Lambda_{0, 0}\!\paren{\pi\subt(S) y^{d - 1} \cdot ( B_d y^{d - 1})^{q - 1}}\right) \\
	&= \pi\subt\!\left(y^{d - 1} \Lambda_{0, 0}\!\paren{\pi\subt(S) B_d^{q - 1}}\right) \\
	&= \Lambda_0\!\paren{\pi\subt(S) B_d^{q - 1}} \\
	&= \lambda_0(\pi\subt(S)).
	\qedhere
\end{align*}
\end{proof}

\subsection{The linear structure of $\lambda_{0,0}$}

Proposition~\ref{univariate emulation} identifies three subspaces on which $\lambda_{0, 0}$ is equivalent to a univariate operator $\lambda_0$.
This proposition is sufficient for the proof of Theorem~\ref{kernel size upper bound}, which we resume in Section~\ref{section: orbit size univariate}.
However, in the remainder of this section we develop additional intuition by using Proposition~\ref{univariate emulation} to refine, in two steps, the standard basis of $V$ to reveal additional structure of the linear transformation $\lambda_{0, 0}$ and its corresponding matrix.

Define
\begin{align*}
	V\subl^\circ &= \left\langle x^0 y^j : 1 \leq j \leq d - 2 \right\rangle \\
	V\subr^\circ &= \left\langle x^h y^j : 0 \leq j \leq d - 2 \right\rangle \\
	V\subt^\circ &= \left\langle x^i y^{d - 1} : 1 \leq i \leq h - 1 \right\rangle
\end{align*}
so that
\begin{align*}
	V\subl &=
		\left\langle x^0 y^0 \right\rangle
		\oplus V\subl^\circ
		\oplus \left\langle x^0 y^{d - 1} \right\rangle \\
	V\subr &=
		V\subr^\circ
		\oplus \left\langle x^h y^{d - 1} \right\rangle \\
	V\subt &=
		\left\langle x^0 y^{d - 1} \right\rangle
		\oplus V\subt^\circ
		\oplus \left\langle x^h y^{d - 1} \right\rangle.
\end{align*}

\begin{figure}
	\begin{tikzpicture}
	\draw [draw = black] (0,0) rectangle (5,5) ;

	\draw [draw = gray, dashed] (1,0) -- (1,5);
	\draw [draw = gray, dashed] (4,0) -- (4,5);
	\draw [draw = gray, dashed] (0,4) -- (5,4);
	\draw [draw = gray, dashed] (0,1) -- (1,1);

	\node[scale = 0.8] at (0.5,0.5) {$x^0 y^0$};
	\node[scale = 0.8] at (1.5,0.5) {$x^1 y^0$};
	\node[scale = 0.8] at (2.4,0.5) {\textcolor{gray}{$\ldots$}};
	\node[scale = 0.8] at (3.4,0.5) {$x^{h - 1} y^0$};
	\node[scale = 0.8] at (4.5,0.5) {$x^h y^0$};

	\node[scale = 0.8] at (0.5,1.5) {$x^0 y^1$};
	\node[scale = 0.8] at (1.5,1.5) {$x^1 y^1$};
	\node[scale = 0.8] at (2.4,1.5) {\textcolor{gray}{$\ldots$}};
	\node[scale = 0.8] at (3.4,1.5) {$x^{h - 1} y^1$};
	\node[scale = 0.8] at (4.5,1.5) {$x^h y^1$};

	\node[scale = 0.8] at (0.5,2.6) {\textcolor{gray}{$\vdots$}};
	\node[scale = 0.8] at (1.5,2.6) {\textcolor{gray}{$\vdots$}};
	\node[scale = 0.8] at (2.4,2.6) {\textcolor{gray}{$\iddots$}};
	\node[scale = 0.8] at (3.4,2.6) {\textcolor{gray}{$\vdots$}};
	\node[scale = 0.8] at (4.5,2.6) {\textcolor{gray}{$\vdots$}};

	\node[scale = 0.8] at (0.5,3.5) {$x^0 y^{d-2}$};
	\node[scale = 0.8] at (1.5,3.5) {$x^1 y^{d-2}$};
	\node[scale = 0.8] at (2.4,3.5) {\textcolor{gray}{$\ldots$}};
	\node[scale = 0.8] at (3.4,3.5) {$x^{h - 1} y^{d-2}$};
	\node[scale = 0.8] at (4.5,3.5) {$x^h y^{d-2}$};

	\node[scale = 0.8] at (0.5,4.5) {$x^0 y^{d - 1}$};
	\node[scale = 0.8] at (1.5,4.5) {$x^1 y^{d - 1}$};
	\node[scale = 0.8] at (2.4,4.5) {\textcolor{gray}{$\ldots$}};
	\node[scale = 0.8] at (3.4,4.5) {$x^{h - 1} y^{d - 1}$};
	\node[scale = 0.8] at (4.5,4.5) {$x^h y^{d - 1}$};
	\end{tikzpicture}
	\caption{Partition of the basis of $V$ into seven sets, which generate the subspaces $\left\langle x^0 y^{d - 1} \right\rangle$, $V\subt^\circ$, $\left\langle x^h y^{d - 1} \right\rangle$, $V\subl^\circ$, $V^\circ$, $V\subr^\circ$, and $\left\langle x^0 y^0 \right\rangle$.}
		\label{seven subspaces}
\end{figure}

The bases of the seven subspaces
\begin{equation}\label{subspace order}
	V^\circ,
	\quad V\subl^\circ,
	\quad \left\langle x^0 y^0 \right\rangle,
	\quad V\subt^\circ,
	\quad \left\langle x^0 y^{d - 1} \right\rangle,
	\quad V\subr^\circ,
	\quad \left\langle x^h y^{d - 1} \right\rangle
\end{equation}
are disjoint and form a set partition of the basis of $V$.
Geometrically, these bases are arranged as in Figure~\ref{seven subspaces}.
We will show in Corollary~\ref{block upper triangular} that, with this decomposition of $V$, the matrix corresponding to $\lambda_{0, 0}$ is block upper triangular.
The block sizes are $
	(h - 1) (d - 1),
	d - 2,
	1,
	h - 1,
	1,
	d - 1,
	1$.

\begin{example}
As in Example~\ref{example denominator projections}, let $h = 2$, $d = 4$, and
\[
	P = (x^2 + x + 2) y^4 + x y^3 + (2 x + 1) y^2 + (x^2 + 1) y + 2 x^2 + x \in \F_3[x, y].
\]
The basis of $V$, ordered according to~\eqref{subspace order}, is
\[
	\left(
	x^1 y^0, x^1 y^1, x^1 y^2, \quad
	x^0 y^1, x^0 y^2, \quad
	x^0 y^0, \quad
	x^1 y^3, \quad
	x^0 y^3, \quad
	x^2 y^0, x^2 y^1, x^2 y^2, \quad
	x^2 y^3
	\right).
\]
With this basis, the operators $\lambda_{0, 0}$, $\lambda_{1, 0}$, and $\lambda_{2, 0}$ are represented by the $12 \times 12$ matrices
\tiny
\begin{align*}
\setcounter{MaxMatrixCols}{12}
	L_{0, 0} &=
	\begin{bmatrix}
		1 & 1 & 1 & 2 & 1 & 2 & 0 & 0 & 2 & 2 & 0 & 0 \\
		1 & 2 & 1 & 2 & 2 & 2 & 1 & 2 & 1 & 1 & 1 & 2 \\
		2 & 2 & 1 & 2 & 1 & 2 & 1 & 2 & 1 & 1 & 1 & 1 \\
		& & & 1 & 2 & 1 & & 1 & & & & \\
		& & & 0 & 1 & 1 & & 1 & & & & \\
		& & & & & 1 & & & & & & \\
		& & & & & & 2 & 2 & & & & 1 \\
		& & & & & & & 1 & & & & \\
		& & & & & & & & 1 & 1 & 1 & 0 \\
		& & & & & & & & 2 & 1 & 0 & 1 \\
		& & & & & & & & 1 & 0 & 0 & 2 \\
		& & & & & & & & & & & 1
	\end{bmatrix} \\
	L_{1, 0} &=
	\begin{bmatrix}
		2 & 2 & 1 & 1 & 1 & 1 & 0 & 0 & 1 & 1 & 1 & 0 \\
		2 & 2 & 2 & 1 & 0 & 2 & 2 & 1 & 1 & 2 & 1 & 1 \\
		2 & 2 & 1 & 0 & 0 & 1 & 2 & 2 & 2 & 2 & 1 & 1 \\
		1 & 1 & 2 & 1 & 1 & 1 & 1 & 2 & 0 & 0 & 0 & 0 \\
		1 & 0 & 1 & 1 & 1 & 1 & 1 & 1 & 0 & 0 & 0 & 0 \\
		1 & 0 & 0 & 2 & 0 & 2 & 0 & 0 & 0 & 0 & 0 & 0 \\
		& & & & & & 2 & 1 & 0 & 0 & 0 & 2 \\
		& & & & & & 1 & 1 & 0 & 0 & 0 & 0 \\
		& & & & & & & 0 & 0 & 0 & 0 & 0 \\
		& & & & & & & & 0 & 0 & 0 & 0 \\
		& & & & & & & & 0 & 0 & 0 & 0 \\
		& & & & & & & & & & & 0
	\end{bmatrix} \\
	L_{2, 0} &=
	\begin{bmatrix}
		1 & 1 & 1 & 0 & 0 & 0 & 0 & 0 & 2 & 2 & 1 & 0 \\
		2 & 1 & 0 & 0 & 0 & 0 & 1 & 0 & 2 & 2 & 2 & 2 \\
		1 & 0 & 0 & 0 & 0 & 0 & 2 & 0 & 2 & 2 & 1 & 2 \\
		1 & 1 & 1 & 2 & 1 & 1 & 2 & 1 & 1 & 1 & 2 & 1 \\
		1 & 1 & 1 & 2 & 1 & 2 & 1 & 1 & 1 & 0 & 1 & 1 \\
		2 & 2 & 0 & 1 & 1 & 1 & 0 & 0 & 1 & 0 & 0 & 0 \\
		& & & & & & 1 & 0 & 0 & 0 & 0 & 2 \\
		& & & & & & 1 & 2 & 0 & 0 & 0 & 1 \\
		& & & & & & & 0 & 0 & 0 & 0 & 0 \\
		& & & & & & & & 0 & 0 & 0 & 0 \\
		& & & & & & & & 0 & 0 & 0 & 0 \\
		& & & & & & & & & & & 0
	\end{bmatrix}.
\end{align*}
\normalsize
The first three columns of $L_{0, 0}$ have $0$s in rows~$4$--$12$, since Proposition~\ref{univariate emulation} tells us that the $V^\circ$ component of $S$ has no impact on the $V\subl$, $V\subr$, and $V\subt$ components of $\lambda_{0, 0}(S)$.
Conversely, several nonzero entries in the last column of $L_{0, 0}$ indicate that the monomial $x^2 y^3 \in V\subr$ in $S$ has an effect on monomials of $\lambda_{0, 0}(S)$ outside of $V\subr$.
In general, entries guaranteed to be $0$ by Theorem~\ref{information flow} below have been omitted from $L_{0, 0}$, and entries guaranteed to be $0$ by Corollary~\ref{block upper triangular} have been omitted from $L_{1, 0}$ and $L_{2, 0}$.
In addition, the second statement in Proposition~\ref{transient} implies that the last $d = 4$ rows of $L_{1, 0}$ and $L_{2, 0}$ are zero rows.
\end{example}

Proposition~\ref{univariate emulation} shows that, under applications of $\lambda_{0, 0}$, information flows from $V\subl$ to its complement subspace but not in the other direction, and similarly for $V\subr$ and $V\subt$.
That is, information flows between four subspaces of $V$ according to the following diagram.
\begin{center}
	\begin{tikzcd}
		% first row
		\left\langle x^0 y^{d - 1} \right\rangle \oplus V\subt^\circ
			\arrow[gray, out = 60, in = 120, loop, looseness = 5]
		\arrow[gray, d]
		& \left\langle x^h y^{d - 1} \right\rangle
			\arrow[gray, out = 60, in = 120, loop, looseness = 5]
			\arrow[gray, l]
			\arrow[gray, d]
			\arrow[gray, dl]
		\\
		% second row
		\left\langle x^0 y^0 \right\rangle \oplus V\subl^\circ \oplus V^\circ
			\arrow[gray, out = 240, in = 300, loop, looseness = 4.5]
		& V\subr^\circ
			\arrow[gray, out = 240, in = 300, loop, looseness = 5]
			\arrow[gray, l]
	\end{tikzcd}
\end{center}

We can refine this further.

\begin{theorem}\label{information flow}
Under applications of $\lambda_{0, 0}$ on $V$, information flows according to the following diagram.
Namely, if $S \in V$ and $U$ is one of the seven distinguished subspaces of $V$, then the projection of $\lambda_{0, 0}(S)$ to $U$ is determined by the projections of $S$ onto the subspaces with arrows pointing to $U$.
\begin{center}
	\begin{tikzcd}
		% first row
		\left\langle x^0 y^{d - 1} \right\rangle
			\arrow[gray, out = 105, in = 165, loop, looseness = 3]
			\arrow[gray, r]
			\arrow[gray, d]
			\arrow[gray, dr]
		& V\subt^\circ
			\arrow[gray, out = 60, in = 120, loop, looseness = 5]
			\arrow[gray, d]
		& \left\langle x^h y^{d - 1} \right\rangle
			\arrow[gray, out = 15, in = 75, loop, looseness = 3]
			\arrow[gray, l]
			\arrow[gray, d]
			\arrow[gray, dl]
		\\
		% second row
		V\subl^\circ
			\arrow[gray, out = 150, in = 210, loop, looseness = 4]
			\arrow[gray, r]
		& V^\circ
			\arrow[gray, out = 240, in = 300, loop, looseness = 6]
		& V\subr^\circ
			\arrow[gray, out = 330, in = 30, loop, looseness = 4]
			\arrow[gray, l]
		\\
		% third row
		\left\langle x^0 y^0 \right\rangle
			\arrow[gray, out = 195, in = 255, loop, looseness = 3]
			\arrow[gray, u]
			\arrow[gray, ur]
	\end{tikzcd}
\end{center}
\end{theorem}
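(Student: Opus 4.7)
The plan is to go through each of the seven target subspaces $U$ and show that $\pi_U(\lambda_{0,0}(S))$ is determined by the projections of $S$ to the subspaces with arrows pointing to $U$. The principal tool is Proposition~\ref{univariate emulation}, which says that the $V\subl$-projection of $\lambda_{0,0}(S)$ depends only on $\pi\subl(S)$, and analogously for $V\subr$ and $V\subt$. Equivalently, any component of $S$ lying outside $V\subl$ (respectively $V\subr$, $V\subt$) contributes zero to the $V\subl$-part (respectively $V\subr$-, $V\subt$-part) of $\lambda_{0,0}(S)$.

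First I would dispatch six of the seven targets using only this coarse information together with the direct-sum decomposition of $V$ into the seven subspaces. Each of the ``edge'' targets $V\subl^\circ$, $V\subr^\circ$, $V\subt^\circ$ is contained in a single edge subspace, so the corresponding part of Proposition~\ref{univariate emulation} restricts contributions to that edge subspace, yielding the three, two, and three arrows shown. The two ``corner'' targets $\langle x^0 y^{d-1}\rangle = V\subl \cap V\subt$ and $\langle x^h y^{d-1}\rangle = V\subr \cap V\subt$ lie in two edge subspaces; applying the two relevant parts of Proposition~\ref{univariate emulation} simultaneously forces contributions to come only from the intersection, which is the target itself, giving just a self-loop. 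The interior target $V^\circ$ lies in none of the three edge subspaces, so Proposition~\ref{univariate emulation} imposes no restriction and the diagram correctly allows every source. Conversely, as a \emph{source}, $V^\circ$ is disjoint from each edge subspace, so by the three parts of Proposition~\ref{univariate emulation} its input contributes zero to the $V\subl$-, $V\subr$-, and $V\subt$-parts of $\lambda_{0,0}(S)$, hence only to the $V^\circ$-part, giving the self-loop at $V^\circ$ and no other outgoing arrows.

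The remaining and most delicate case is the target $\langle x^0 y^0 \rangle \subseteq V\subl$. Proposition~\ref{univariate emulation}(1) alone only rules out contributions from outside $V\subl$, but the diagram further requires that neither $V\subl^\circ$ nor $\langle x^0 y^{d-1} \rangle$ contributes. To handle this, I would use Proposition~\ref{univariate emulation}(1) to identify the $\langle x^0 y^0\rangle$-component of $\lambda_{0,0}(S)$ with the constant term (in $y$) of $\lambda_0(\pi\subl(S))$, where $\lambda_0(T) = \Lambda_0(T R^{q-1})$ with $R = A_0/y$. The Furstenberg condition $P(0,0) = 0$ forces $A_0$ to be divisible by $y$, so $R$ is a genuine polynomial in $\field[y]$ and $R^{q-1}$ has no negative $y$-powers. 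For each $j \geq 1$, the coefficient of $y^0$ in $\lambda_0(y^j) = \Lambda_0(y^j R^{q-1})$ equals the coefficient of $y^{-j}$ in $R^{q-1}$, which vanishes. Hence only the $y^0$-term of $\pi\subl(S)$, which is the $\langle x^0 y^0\rangle$-component of $S$ itself, can contribute to the target.

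I expect this last step to be the main obstacle, because it is the one place where Proposition~\ref{univariate emulation} must be supplemented. The argument depends essentially on $R = A_0/y$ being a genuine polynomial rather than a Laurent polynomial, as is the case for $R = A_h/y$ used in Proposition~\ref{univariate emulation}(2); this asymmetry between the bottom-left corner and the others is precisely what breaks the left-right symmetry in the seven-subspace diagram. Every other arrow, or absence thereof, follows formally from Proposition~\ref{univariate emulation} combined with the partition of the basis of $V$ into the seven subspaces.
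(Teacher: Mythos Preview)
Your proposal is correct and follows essentially the same approach as the paper: both arguments use Proposition~\ref{univariate emulation} to rule out all arrows into the edge and corner subspaces that do not appear, and both handle the exceptional target $\langle x^0 y^0\rangle$ by exploiting the Furstenberg condition $P(0,0)=0$ to see that $A_0/y$ is a genuine polynomial, so that $y^j$ with $j\ge 1$ cannot contribute to the constant term. The only cosmetic difference is that you route the $\langle x^0 y^0\rangle$ argument through the univariate operator $\lambda_0$ with $R=A_0/y$, whereas the paper argues directly with $Q=P/y$, observing that every monomial of $Q$ with negative $y$-exponent has positive $x$-exponent; these are the same computation viewed from two sides.
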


\begin{proof}
We will rule out all arrows that do not appear in the diagram.

Part~\ref{left border} of Proposition~\ref{univariate emulation} implies that the left subspaces $\left\langle x^0 y^0 \right\rangle$, $V\subl^\circ$, and $\left\langle x^0 y^{d - 1} \right\rangle$ have no incoming arrows from the other four subspaces.
Similarly, Part~\ref{right border} implies that the right subspaces $V\subr^\circ$ and $\left\langle x^h y^{d - 1} \right\rangle$ have no incoming arrows from the other five subspaces, and Part~\ref{top border} implies that the top subspaces $\left\langle x^0 y^{d - 1} \right\rangle$, $V\subt^\circ$, and $\left\langle x^h y^{d - 1} \right\rangle$ have no incoming arrows from the other four subspaces.
It follows that the top corner subspaces $\left\langle x^0 y^{d - 1} \right\rangle$ and $\left\langle x^h y^{d - 1} \right\rangle$ have no incoming arrows other than their loops.

To see that $\left\langle x^0 y^0 \right\rangle$ has no incoming arrows other than its loop, let $j \in \{1, \dots, d - 1\}$.
We have $\lambda_{0, 0}(x^0 y^j) = \Lambda_{0, 0}(x^0 y^j Q^{q - 1})$.
The coefficient of $x^0 y^0$ in $\lambda_{0, 0}(x^0 y^j)$ is equal to the coefficient of $x^0 y^0$ in $x^0 y^j Q^{q - 1}$.
However, since $P(0, 0) = 0$ and $Q = P / y$, the only monomials $x^I y^J$ with $J \leq -1$ that appear in $Q$ with a nonzero coefficient satisfy $I \geq 1$.
Therefore the coefficient of $x^0 y^0$ in $x^0 y^j Q^{q - 1}$ is $0$, and $\left\langle x^0 y^0 \right\rangle$ has only one incoming arrow.
\end{proof}

Theorem~\ref{information flow} and Proposition~\ref{transient} imply the following, where the seven blocks correspond to the seven subspaces in Theorem~\ref{information flow}.

\begin{corollary}\label{block upper triangular}
If the basis of $V$ is ordered according to~\eqref{subspace order}, then the matrix corresponding to $\lambda_{0, 0}$ is block upper triangular with seven blocks.
Moreover, for all $r \in\{1,2,\dots,q - 1\}$, the matrix corresponding to $\lambda_{r, 0}$ is block upper triangular with four blocks (whose sizes are $h (d - 1)$, $h$, $d - 1$, and $1$).
\end{corollary}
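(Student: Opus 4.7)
The plan is to deduce both statements from results already in place: Theorem~\ref{information flow} handles the seven-block claim about $\lambda_{0,0}$, while Proposition~\ref{transient} plus one short degree estimate handles the four-block claim about $\lambda_{r,0}$ with $r\neq 0$.

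For $\lambda_{0,0}$, I would label the seven subspaces in the order of~\eqref{subspace order} as $U_1,\dots,U_7$. Block upper-triangularity with respect to this ordering means exactly that, for each $j$ and each basis vector $e\in U_j$, the image $\lambda_{0,0}(e)$ has no component in any $U_k$ with $k>j$. This is precisely what Theorem~\ref{information flow} records, once one checks case by case that every outgoing arrow from $U_j$ lands in a subspace of index at most $j$. For instance, $U_2 = V\subl^\circ$ sends only to $U_1$ and $U_2$; $U_3 = \langle x^0 y^0\rangle$ sends only to $U_1,U_2,U_3$; $U_5 = \langle x^0 y^{d-1}\rangle$ sends to $U_1,U_2,U_4,U_5$; and $U_7 = \langle x^h y^{d-1}\rangle$ sends to $U_1,U_4,U_6,U_7$. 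Running the check across all seven subspaces is enough.

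For $\lambda_{r,0}$ with $r\in\{1,\dots,q-1\}$, I would group the seven subspaces into four consecutive blocks
\[
    A = V^\circ \oplus V\subl^\circ \oplus \langle x^0 y^0\rangle, \qquad B = V\subt^\circ \oplus \langle x^0 y^{d-1}\rangle, \qquad C = V\subr^\circ, \qquad D = \langle x^h y^{d-1}\rangle.
\]
A direct dimension count gives $(h-1)(d-1)+(d-2)+1 = h(d-1)$, $(h-1)+1=h$, $d-1$, and $1$, matching the required block sizes. Block upper-triangularity then reduces to two separate facts. First, the last two block-rows vanish, i.e.\ $\lambda_{r,0}(V)\subseteq W = A\oplus B$, which is exactly the second bullet of Proposition~\ref{transient}. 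Second, $\lambda_{r,0}(A)\subseteq A$. Since $A$ is spanned by all monomials in $W$ with $y$-degree at most $d-2$, while $B$ is spanned by monomials of $y$-degree exactly $d-1$, it suffices to redo the $y$-degree bound from the proof of Proposition~\ref{transient}, now restricted to $j\le d-2$. The same computation yields
\[
    \tfrac{j+J}{q} = \floor{\tfrac{j+J}{q}} \le \floor{\tfrac{(d-2)+(q-1)(d-1)}{q}} = \floor{\tfrac{q(d-1)-1}{q}} = d-2,
\]
which is independent of $r$ and confirms $\lambda_{r,0}(A)\subseteq A$.

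The main obstacle is bookkeeping rather than substance: fixing a row/column convention and ensuring the seven-case check for $\lambda_{0,0}$ is exhaustive. All technical content---the information-flow diagram and the $y$-degree bound---has already been produced in the preceding propositions, so Corollary~\ref{block upper triangular} is essentially a clean translation of these facts into the language of block-triangular matrices.
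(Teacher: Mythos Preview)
Your proposal is correct and follows the paper's intended route: the paper itself gives no proof beyond the sentence ``Theorem~\ref{information flow} and Proposition~\ref{transient} imply the following,'' and you have unpacked exactly those two ingredients. The one place you go beyond the paper's literal citation is the short $y$-degree estimate showing $\lambda_{r,0}(A)\subseteq A$; this is not stated verbatim in Proposition~\ref{transient}, but it is the natural extra line (in the spirit of the proofs of Propositions~\ref{module} and~\ref{transient}) needed to make the four-block claim airtight, and it is correct.
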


\section{Orbit size of a univariate polynomial under $\lambda_0$}\label{section: orbit size univariate}

Proposition~\ref{univariate emulation} (and, more explicitly, Theorem~\ref{information flow}) shows that the orbit size of a bivariate polynomial $S \in V$ under $\lambda_{0, 0}$ depends in part on the orbit sizes of the univariate polynomials $\pi\subl(S), \pi\subr(S), \pi\subt(S)$ under $\lambda_0$ for the respective values $R = A_0/y, R = A_h/y, R = B_d$.
(Recall from Equation~\eqref{univariate little lambda definition} that the definition of $\lambda_0$ depends on $R$.)
The main result of this section is Theorem~\ref{univariate orbit size upper bound}, which establishes an upper bound on orbit sizes under $\lambda_0$ for a general element $R \in z^{-1} \field[z]$; this includes the case $R = A_h/y$ (where $z = y$), which is not necessarily a polynomial (for instance, as in Example~\ref{example denominator projections}).

We will use the following lemma several times.

\begin{lemma}\label{iterating floor}
Let $q \geq 2$.
\begin{itemize}
\item
If $k \in \Z$ and $f(x) = \floor{\frac{x + k (q - 1)}{q}}$, then, for every $x \geq k$ and $n \geq \floor{\log_q(x - k)} + 1$, we have $f^n(x) = k$.
\item
If $k\geq 1$ and $f(x) = \ceil{\frac{x + k (q - 1)}{q}}$, then, for every $x\geq 0$ and $n \geq \floor{\log_q k} + 1$, we have $f^n(x) \geq k$.
\end{itemize}
\end{lemma}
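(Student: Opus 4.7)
The plan is to handle both parts via the same conjugation trick: reduce $f$ to the ``floor-divide-by-$q$'' map $g(y) = \floor{y/q}$, for which $g^n(y) = \floor{y/q^n}$ is a classical identity.

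For part~(1), I would first note that $k$ is a fixed point of $f$, since $f(k) = \floor{kq/q} = k$, and then check by direct simplification that $f(x) - k = \floor{(x - k)/q}$ for every $x \in \Z$. Setting $y = x - k \geq 0$, an easy induction on $n$ then yields $f^n(x) - k = \floor{(x - k)/q^n}$. This vanishes precisely when $q^n > x - k$, which is equivalent to $n \geq \floor{\log_q(x - k)} + 1$. The edge case $x = k$ (where $\log_q(x - k)$ is undefined) is immediate from $f(k) = k$.

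For part~(2), I would again verify $f(k) = k$, observe that $f$ is monotone non-decreasing, and establish that $f(x) \leq k$ precisely when $x \leq k$. These three facts split the argument: if $x \geq k$, then monotonicity already gives $f^n(x) \geq k$ for every $n \geq 1$, so we may assume $x \leq k$. For such $x$, using $\ceil{-u/q} = -\floor{u/q}$ one obtains $k - f(x) = \floor{(k - x)/q}$, and an induction identical to the one in part~(1) gives $k - f^n(x) = \floor{(k - x)/q^n}$ as long as the orbit stays $\leq k$, which it does since $f$ maps $[0, k]$ into itself. The worst case is $x = 0$, where we need $\floor{k/q^n} = 0$, i.e.\ $q^n > k$; this holds precisely when $n \geq \floor{\log_q k} + 1$.

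I anticipate no real obstacle. The only care needed is the case split on $x \leq k$ versus $x \geq k$ in part~(2), since the ``distance to $k$'' identity only governs the approach from below; on the other side, monotonicity is immediate. Both bounds are sharp, as one can see by taking $x - k = q^{n} - 1$ in part~(1) or $x = 0$ in part~(2).
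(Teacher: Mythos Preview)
Your proposal is correct and, for part~(1), essentially identical to the paper's argument: both rewrite $f(x) = k + \floor{(x-k)/q}$ and iterate to $f^n(x) = k + \floor{(x-k)/q^n}$ via the nested-floor identity.

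For part~(2) your argument works, but the paper is more direct: it observes that the analogous ceiling identity $f^n(x) = k + \ceil{(x-k)/q^n}$ holds for \emph{all} $x$ (since $\ceil{\ceil{a}/q} = \ceil{a/q}$ for integer $q \geq 1$), and then simply bounds $\ceil{(x-k)/q^n} \geq \ceil{-k/q^n} = 0$ once $q^n > k$. Your case split on $x \leq k$ versus $x \geq k$ is unnecessary: the identity $k - f(x) = \floor{(k-x)/q}$ that you derive via $\ceil{-u/q} = -\floor{u/q}$ is valid for every integer $x$, so the induction $k - f^n(x) = \floor{(k-x)/q^n}$ goes through unconditionally, and then $x \geq 0$ gives $\floor{(k-x)/q^n} \leq \floor{k/q^n} = 0$ directly. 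In other words, the monotonicity branch is redundant; the paper's route is the same computation written on the ceiling side.
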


\begin{proof}
The function $f(x) = \floor{\frac{x + k (q - 1)}{q}}= k + \floor{\frac{x - k}{q}}$ has an attracting fixed point $k$ for $x \geq k$.
Since $\floor{\frac{\floor{(x - k)/q^n}}{q}} = \floor{\frac{x - k}{q^{n + 1}}}$, a straightforward induction shows that $f^n(x) = k + \floor{\frac{x - k}{q^n}}$ for all $n \geq 0$.
The first statement follows.

For the second statement, we have $f^n(x) = k + \ceil{\frac{x - k}{q^n}}$ for all $n \geq 0$.
If $n \geq \floor{\log_q k} + 1$, then $\ceil{-\frac{k}{q^n}} = 0$.
Therefore $f^n(x) = k + \ceil{\frac{x - k}{q^n}} \geq k + \ceil{-\frac{k}{q^n}} = k$.
\end{proof}

The following proposition shows that if $\deg S > \deg R$ then the orbit of $S$ under $\lambda_0$ eventually consists of polynomials with degree at most $\deg R$.
Looking ahead, this will let us restrict attention to polynomials $S$ with $\deg S \leq \deg R$ in later results (namely, Theorem~\ref{square-free orbit size upper bound - positive degree}, Corollary~\ref{square-free orbit size upper bound}, and Theorem~\ref{univariate orbit size upper bound}).
We will use it directly in the proof of Lemma~\ref{right degree}.

\begin{proposition}\label{fixed point}
Let $R \in z^{-1} \field[z]$ be a Laurent polynomial, let $r = \deg R$, and define $\lambda_0$ on $\field[z]$ by $\lambda_0(S) = \Lambda_0\!\paren{S R^{q - 1}}$.
Let $S \in \field[z]$, let $s = \deg S$, and suppose that $s>r$.
If $n\geq \floor{\log_q(s - r)} + 1$, then $\deg \lambda_0^n(S)\leq r$.
\end{proposition}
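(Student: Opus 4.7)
The plan is to reduce this to Lemma~\ref{iterating floor} by tracking a sharp upper bound on $\deg \lambda_0^n(S)$ under iteration. The approach has essentially three ingredients: a one-step degree inequality, monotonicity to iterate that inequality, and a direct application of Lemma~\ref{iterating floor} to collapse the iterate to $r$.

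First, I would establish the one-step bound. For any nonzero $T \in \field[z]$ with $\deg T = t$, multiplication by $R^{q-1}$ raises the degree by at most $(q-1)r$, so $\deg(T R^{q-1}) \leq t + (q-1)r$. The univariate Cartier operator $\Lambda_0$ extracts coefficients of $z^{qk}$, so $\deg \Lambda_0(U) \leq \floor{\deg U / q}$ for any nonzero Laurent polynomial $U$ of nonnegative degree. Composing gives the key inequality
\[
	\deg \lambda_0(T) \;\leq\; \floor{\frac{t + (q - 1) r}{q}} \;=\; r + \floor{\frac{t - r}{q}}.
\]

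Next, set $f(x) \colonequal \floor{(x + (q-1) r)/q}$ and note that $f$ is monotonically nondecreasing in $x$. An easy induction on $n$, using the one-step bound and the monotonicity of $f$, gives $\deg \lambda_0^n(S) \leq f^n(s)$ for every $n \geq 0$ (for as long as the iterates are nonzero; if any $\lambda_0^n(S) = 0$, all later iterates vanish and the conclusion holds trivially under the usual convention $\deg 0 = -\infty$). Since $s > r$, we have $s \geq r$ and $s - r \geq 1$, so $\floor{\log_q(s-r)} \geq 0$ is well defined. Applying the first part of Lemma~\ref{iterating floor} with $k = r$ and $x = s$ yields $f^n(s) = r$ for every $n \geq \floor{\log_q(s - r)} + 1$, and therefore $\deg \lambda_0^n(S) \leq r$ for such $n$.

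There is no real obstacle here; the only points requiring care are verifying that $f$ is monotone (which it is, as a sum of a monotone floor and a constant) and that Lemma~\ref{iterating floor} applies even when $r$ is negative or zero (it does, since the lemma only requires $k \in \Z$ and $x \geq k$, both satisfied by $k = r$ and $x = s > r$). The case in which $R$ is a genuine Laurent polynomial with $r = -1$ therefore requires no special treatment.
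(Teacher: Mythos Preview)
Your proof is correct and follows essentially the same approach as the paper: establish the one-step degree inequality $\deg \lambda_0(T) \leq \floor{(t + (q-1)r)/q}$, iterate the function $f(x) = \floor{(x + (q-1)r)/q}$, and invoke Lemma~\ref{iterating floor} with $k = r$. The paper's version is terser, omitting the explicit monotonicity argument and the handling of the zero case that you spell out, but the structure is identical.
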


In particular, if $r = -1$ then $\lambda_0^n(S) = 0$ for sufficiently large $n$ since $\lambda_0$ maps polynomials to polynomials.

\begin{proof}
We have $\deg \lambda_0(S) \leq \frac{s + r (q - 1)}{q}$.
We track the behavior of $\deg \lambda_0^n(S)$ by iterating the function $f(x) = \floor{\frac{x + r (q - 1)}{q}}$.
Applying Lemma~\ref{iterating floor} with $k = r$, the result follows.
\end{proof}

\begin{example}
Let $q = 3$ and $R = (z^2 + 1) (z^3 + z^2 + 2) \in \F_3[z]$.
By computing $\orb_{\lambda_0}(S)$ from each $S \in \F_3[z]$ with $\deg S \leq \deg R = 5$, one finds that each orbit is periodic with period length $1$, $2$, $3$, or $6$.
For example, the orbit of $z^4 + z^2$ has period length $1$, the orbit of $z^2 + 2 z + 1$ has period length $2$, the orbit of $z + 2$ has period length $3$, and the orbit of $1$ has period length $6$.
\end{example}

The orbit of $S \in \field[z]$ under $\lambda_0$ is eventually periodic, since an argument similar to Proposition~\ref{fixed point} shows that the elements in the orbit have bounded degree.
As we vary $q$ and $r$ and consider all polynomials $R \in \field[z]$ with fixed degree $\deg R = r$, one finds that the maximal size of the orbit is independent of $q$ and depends only on $r$.
We prove this in Theorem~\ref{square-free orbit size upper bound - positive degree}, which is an important step in proving Theorem~\ref{univariate orbit size upper bound}.
The proof uses the periodicity of the series expansion of $\frac{1}{R}$ to establish the periodicity of the orbit under $\lambda_0$, as in the following example.

\begin{example}
Let $q = 2$ and $R = z^2 + z + 1 \in \F_2[z]$.
In light of Proposition~\ref{fixed point}, we consider polynomials $S \in \F_2[z]$ such that $\deg S \leq \deg R = 2$.
Let $j \in \{0, 1, 2\}$, so that each monomial in $S$ is of the form $c \, z^j$.
Proposition~\ref{Cartier} implies $\lambda_0(z^j) = \Lambda_0(z^j R^{q - 1}) = \Lambda_0(\frac{z^j}{R}) R$.
Iterating $\lambda_0$ gives $\lambda_0^n(z^j) = \Lambda_0^n(\frac{z^j}{R}) R$ for all $n \geq 0$.
We show that $\Lambda_0^2(\frac{z^j}{R}) = \frac{z^j}{R}$; this implies $\lambda_0^2(z^j) = z^j$, which, by linearity, implies $\lambda_0^2(S) = S$ for all $S \in \F_2[z]$ with $\deg S \leq 2$.
We will only use two facts about the series expansion $\sum_{n \geq 0} a(n) z^n \colonequal \frac{1}{R} = 1 + 1 z + 0 z^2 + 1 z^3 + 1 z^4 + 0 z^5 + \cdots$: it is periodic with period length $3$, and $a(2) = 0$.
We start by rewriting
\[
	\frac{z^j}{R}
	= \sum_{n \geq 0} a(n) z^{n + j}
	= \sum_{n \geq j} a(n - j) z^n.
\]
Since $\Lambda_0^2(z^n) = 0$ if $n \nequiv 0 \mod 4$, this implies
\[
	\Lambda_0^2\!\paren{\frac{z^j}{R}}
	= \Lambda_0^2\!\paren{\sum_{n \geq \ceil{j/4}} a(4 n - j) z^{4 n}}
	= \sum_{n \geq \ceil{j/4}} a(4 n - j) z^n.
\]
If $j = 0$ or $j = 1$, then $\ceil{j/4} = j$, so this series is $\sum_{n \geq j} a(4 n - j) z^n$.
If $j = 2$, then the coefficient for $n = \ceil{j/4} = 1$ is $a(4 \cdot 1 - j) = a(2) = 0$, so again the series is $\sum_{n \geq 2} a(4 n - j) z^n = \sum_{n \geq j} a(4 n - j) z^n$.
Since $a(n)_{n \geq 0}$ is periodic with period length $3$, we have $a(4 n - j) = a((4 n - j) \bmod 3) = a(n - j)$ for all $n \geq j \geq 0$.
Therefore
\[
	\Lambda_0^2\!\paren{\frac{z^j}{R}}
	= \sum_{n \geq j} a(4 n - j) z^n
	= \sum_{n \geq j} a(n - j) z^n
	= \sum_{n \geq 0} a(n) z^{n + j}
	= \frac{z^j}{R},
\]
as desired.
\end{example}

In general, periodicity of the series expansion of $\frac{1}{R}$ is guaranteed by the following standard argument.

\begin{lemma}\label{periodic series}
Let $R \in \field[z]$ be a polynomial with $\deg R \geq 1$.
If the coefficient of $z^0$ is nonzero, then $\frac{1}{R}$ has a power series expansion, and the sequence of coefficients of $\frac{1}{R}$ is periodic.
\end{lemma}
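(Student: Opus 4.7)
The plan is to realize the coefficient sequence $(a(n))_{n \geq 0}$ as a linear recurrent sequence over the finite field $\field$ and exploit the finiteness of the state space. First, since $R(0) \neq 0$, the polynomial $R$ is a unit in the formal power series ring $\field\doublebracket{z}$, so the expansion $\frac{1}{R} = \sum_{n \geq 0} a(n) z^n$ exists. Writing $R = \sum_{i=0}^d r_i z^i$ with $r_0 \neq 0$ and $r_d \neq 0$, and comparing coefficients in the identity $R \cdot \frac{1}{R} = 1$, yields a homogeneous linear recurrence of order $d$:
\[
	a(n) = -\frac{1}{r_0} \sum_{i=1}^d r_i \, a(n-i) \qquad \text{for all } n \geq d,
\]
with initial values $a(0), \dots, a(d-1)$ determined by the triangular system coming from the coefficients of $z^0, \dots, z^{d-1}$.

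Next, I would encode the recurrence as a linear dynamical system on $\field^d$. Setting $\mathbf{v}(n) = (a(n), a(n+1), \dots, a(n+d-1))$, the recurrence reads $\mathbf{v}(n+1) = M \mathbf{v}(n)$, where $M$ is the companion matrix of the recurrence. Its determinant equals $\pm r_d / r_0$, which is nonzero because $\deg R = d \geq 1$ forces $r_d \neq 0$ and $r_0 \neq 0$ by hypothesis. Hence $M$ is an invertible endomorphism of the finite set $\field^d$. By the pigeonhole principle there exist $0 \leq i < j$ with $M^i \mathbf{v}(0) = M^j \mathbf{v}(0)$; applying $M^{-i}$ yields $\mathbf{v}(0) = M^{j-i} \mathbf{v}(0)$. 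Iterating and reading off first coordinates gives $a(n) = a(n + (j - i))$ for every $n \geq 0$, establishing pure periodicity of the coefficient sequence.

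I do not anticipate any real obstacle; the argument is essentially a finite-state pigeonhole. The one subtlety worth flagging is that bare pigeonhole on the state vectors would only provide eventual periodicity, and it is the invertibility of $M$—which in turn relies on $r_d \neq 0$, i.e., on the hypothesis $\deg R \geq 1$—that upgrades this to pure periodicity from index $0$ onward, which is what the lemma asserts and what is needed for the downstream application to orbits under $\lambda_0$.
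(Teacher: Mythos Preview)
Your proposal is correct and follows essentially the same approach as the paper: derive a linear recurrence from $R\cdot\frac{1}{R}=1$, use finiteness of $\field^d$ for eventual periodicity, and upgrade to pure periodicity via invertibility. The only cosmetic difference is that the paper phrases the last step as ``run the recurrence backward'' (since the leading coefficient $r_d$ is invertible), whereas you formalize it via the invertibility of the companion matrix; these are the same observation.
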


\begin{proof}
The fact that $\frac{1}{R}$ has a power series expansion follows from $R(0) \neq 0$ and the geometric series formula.
Write $\frac{1}{R} = \sum_{n \geq 0} a(n) z^n$.
The relation $R \sum_{n \geq 0} a(n) z^n = 1$ gives a recurrence for the coefficient sequence $a(n)_{n \geq 0}$.
Since there are only finitely many $(\deg R)$-tuples of elements from $\field$, the sequence $a(n)_{n \geq 0}$ is eventually periodic.
Since the coefficient of $z^{\deg R}$ is invertible, we can run the recurrence backward as well as forward, so $a(n)_{n \geq 0}$ is periodic.
\end{proof}

The main result of this section is that the size of the orbit under $\lambda_0$ is related to the factorization of $R$.
The \emph{factorization into irreducibles} of an element $R \in z^{-1} \field[z]$ is $R = c z^{e_0} R_1^{e_1} \cdots R_k^{e_k}$, where $z, R_1, \dots, R_k \in \field[z]$ are distinct, monic, irreducible polynomials, $c \in \field$, $e_0 \geq -1$, and $e_i \geq 1$ for all $i \in \{1, \dots, k\}$.
We say that $R$ is \emph{square-free} if $e_i = 1$ for all $i \in \{1, \dots, k\}$.
If $R \in z^{-1} \field[z]$ and $R \neq 0$, define $\deg R$ to be the largest exponent of $z$ with a nonzero coefficient in the expansion of $R$ in the monomial basis.

First we establish a bound on the orbit size for certain square-free Laurent polynomials $R$ with positive degree.
We use the convention that $\lcm() = 1$.

\begin{theorem}\label{square-free orbit size upper bound - positive degree}
Let $R \in z^{-1} \field[z]$ be a nonzero square-free Laurent polynomial such that $\deg R \geq 1$, whose factorization into irreducibles is of the form $c z^{e_0} R_1 \cdots R_k$, where $e_0 \in \{-1, 0\}$.
Let $\ell = \lcm(\deg R_1, \dots, \deg R_k)$.
Define $\lambda_0$ on $\field[z]$ by $\lambda_0(S) = \Lambda_0\!\paren{S R^{q - 1}}$.
Then $\lambda_0^\ell(S) = S$ for all $S \in \field[z]$ with $\deg S \leq \deg R$.
\end{theorem}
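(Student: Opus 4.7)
The plan is to transfer the problem from polynomials under $\lambda_0$ to rational functions under the univariate Cartier operator $\Lambda_0$. Applying Proposition~\ref{Cartier} to the identity $SR^{q-1} = (S/R) \cdot R^q$ with $F = R$ and $G = S/R$ yields $\lambda_0(S) = \Lambda_0(S/R) \cdot R$, where $S/R$ is a genuine power series in $z$ (when $e_0 = 0$, because $R(0) \neq 0$; when $e_0 = -1$, writing $R = c z^{-1} \tilde R$ with $\tilde R = R_1 \cdots R_k$, because $S/R = (z S / c)/\tilde R$ and $\tilde R(0) \neq 0$). Iterating gives $\lambda_0^n(S) = \Lambda_0^n(S/R) \cdot R$, so the theorem reduces to showing $\Lambda_0^\ell(S/R) = S/R$ for every $S \in \field[z]$ with $\deg S \leq \deg R$.

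The Laurent case $e_0 = -1$ reduces immediately to the polynomial case: with $R = c z^{-1} \tilde R$, we have $S/R = (z S/c)/\tilde R$ and $\deg(zS/c) \leq 1 + \deg R = \deg \tilde R$, while $\lcm(\deg R_1, \dots, \deg R_k)$ is unchanged. So assume henceforth that $R = c R_1 \cdots R_k$ is a polynomial with $R(0) \neq 0$, and set $d = \deg R$.

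By Lemma~\ref{periodic series}, $1/R = \sum_{n \geq 0} a_n z^n$ is periodic; let $\pi$ be its period. Two structural facts about $\pi$ drive the argument. First, $\pi$ equals the multiplicative order of $z$ in $(\field[z]/R)^\times \cong \prod_i \F_{q^{\deg R_i}}^\times$ (by the Chinese Remainder Theorem), so $\pi$ divides $\lcm_i(q^{\deg R_i} - 1)$, which in turn divides $q^\ell - 1$ since each $\deg R_i \mid \ell$. Second, the congruence $z^\pi \equiv 1 \pmod R$ yields a polynomial identity $z^\pi - 1 = R Q$ with $\deg Q = \pi - d$ (so in particular $\pi \geq d$); comparing coefficients of $z^m$ in $z^\pi/R = Q + 1/R$ for $\pi - d < m < \pi$ forces $a_{\pi - d + 1} = \cdots = a_{\pi - 1} = 0$, i.e., the final $d - 1$ coefficients of each period vanish. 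Finally, since $R \mid z^{q^\ell} - z$ (because each root of $R$ lies in $\F_{q^\ell}$, using $\deg R_i \mid \ell$), a degree count gives $d \leq q^\ell$.

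By linearity it then suffices to verify $\Lambda_0^\ell(z^j/R) = z^j/R$ for $0 \leq j \leq d$. The coefficient of $z^n$ in $\Lambda_0^\ell(z^j/R)$ equals $a_{q^\ell n - j}$ when $q^\ell n \geq j$ and $0$ otherwise, and one checks this matches the coefficient of $z^n$ in $z^j/R$ via three cases: $n = 0$ is trivial; for $n \geq j$, the congruence $q^\ell n \equiv n \pmod \pi$ gives $a_{q^\ell n - j} = a_{n - j}$; and for $1 \leq n < j$, the bound $d \leq q^\ell$ ensures $q^\ell n \geq d \geq j$, and then $a_{q^\ell n - j} = a_{(n - j) \bmod \pi} = 0$, because $(n - j) \bmod \pi$ lies in $\{\pi - j + 1, \dots, \pi - 1\} \subseteq \{\pi - d + 1, \dots, \pi - 1\}$, on which $a$ vanishes. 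The main obstacle is establishing this last ingredient, the vanishing of the final $d - 1$ coefficients of the period, which rests on the polynomial identity $z^\pi - 1 = R Q$ and a degree comparison; the remaining bookkeeping is routine.
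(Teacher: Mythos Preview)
Your proof is correct and follows essentially the same route as the paper's: transfer from $\lambda_0$ to $\Lambda_0$ via Proposition~\ref{Cartier}, use that the coefficient sequence of $1/R$ is periodic with period dividing $q^\ell - 1$, and show that a block of $\deg R - 1$ coefficients at the tail of each period vanishes to handle the indices $1 \le n < j$. The only differences are organizational---you reduce the case $e_0 = -1$ to the polynomial case up front and work with the minimal period $\pi$ (identified as the multiplicative order of $z$ modulo $R$) rather than the convenient multiple $q^\ell - 1$ used in the paper---and these do not change the substance of the argument.
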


To prove Theorem~\ref{square-free orbit size upper bound - positive degree}, we use the following classical result to bound the period length of the series expansion of $\frac{1}{R}$ and to conclude that certain coefficients are $0$.

\begin{proposition}\label{product of monic irreducibles}
Let $\ell \geq 1$.
The product of all monic irreducible polynomials in $\field[z]$ with degree dividing $\ell$ is $z^{q^\ell} - z$.
\end{proposition}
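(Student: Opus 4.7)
The plan is to invoke the standard classification of elements of finite fields by their minimal polynomial degree, viewing $z^{q^\ell} - z$ as the polynomial whose roots are exactly $\F_{q^\ell}$.

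First I would observe that the formal derivative of $z^{q^\ell} - z$ in $\field[z]$ is $-1$, so the polynomial is separable and has $q^\ell$ distinct roots in a fixed algebraic closure $\overline{\field}$. By the characterization of $\F_{q^\ell}$ as the fixed field of the $\ell$-th power of the Frobenius, these roots are precisely the elements of $\F_{q^\ell}$. Hence, as monic polynomials of degree $q^\ell$ with the same simple root set,
\[
	z^{q^\ell} - z = \prod_{\alpha \in \F_{q^\ell}} (z - \alpha).
\]

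Next I would group the roots according to their minimal polynomials over $\field$. Given a monic irreducible $R \in \field[z]$ of degree $m$ and a root $\alpha$ of $R$, we have $\field(\alpha) \cong \F_{q^m}$, and $\alpha \in \F_{q^\ell}$ iff $\F_{q^m} \subseteq \F_{q^\ell}$ iff $m \mid \ell$. Moreover, all Galois conjugates of $\alpha$ lie in $\field(\alpha)$, so either all roots of $R$ lie in $\F_{q^\ell}$ (when $m \mid \ell$) or none do. Conversely, every $\alpha \in \F_{q^\ell}$ is a root of a unique monic irreducible over $\field$, whose degree $[\field(\alpha):\field]$ divides $\ell$. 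Therefore the set of roots of $z^{q^\ell} - z$ partitions as the disjoint union, over monic irreducibles $R$ of degree dividing $\ell$, of the root sets of $R$.

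Combining the two displays yields $z^{q^\ell} - z = \prod_{R} R$, where the product runs over monic irreducibles $R \in \field[z]$ with $\deg R \mid \ell$, and each $R$ appears exactly once because $z^{q^\ell} - z$ is separable. There is no real obstacle here; the only point requiring a moment's care is verifying that every monic irreducible of degree dividing $\ell$ does occur (not merely that each one could divide the left-hand side), which is guaranteed because every element of $\F_{q^\ell}$ contributes its minimal polynomial to the factorization.
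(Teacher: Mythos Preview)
Your proof is correct and is the standard argument. Note that the paper does not actually prove this proposition; it is stated there as a classical result without proof and used as a tool in the proof of Theorem~12, so there is no ``paper's own proof'' to compare against.
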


Proposition~\ref{product of monic irreducibles} follows from the fact that $\F_{q^\ell}$ is the splitting field of $z^{q^\ell} - z$ over $\field$;
since each element in $\F_{q^\ell}$ has a minimal polynomial over $\field$, the product of all those minimal polynomials is $z^{q^\ell} - z$.

Now we prove Theorem~\ref{square-free orbit size upper bound - positive degree}.

\begin{proof}[Proof of Theorem~\ref{square-free orbit size upper bound - positive degree}]
Let $r \colonequal \deg R$.
Since $e_0 \in \{-1,0\}$, we have a power series expansion $\frac{1}{R} = \sum_{n \geq 0} a(n) z^n \in \field\doublebracket{z}$.
Let $j \in \{0, 1, \dots, r\}$.
By Proposition~\ref{Cartier}, $\lambda_0(z^j) = \Lambda_0(z^j R^{q - 1}) = \Lambda_0(\frac{z^j}{R}) R$.
Therefore, by iterating, $\lambda_0^\ell(z^j) = \Lambda_0^\ell(\frac{z^j}{R}) R$.
We show $\Lambda_0^\ell(\frac{z^j}{R}) = \frac{z^j}{R}$; this implies $\lambda_0^\ell(z^j) = z^j$, and the statement will follow from the linearity of $\lambda_0$.
Since $\Lambda_0^\ell(z^n) = 0$ if $n \nequiv 0 \mod q^\ell$, we have
\begin{align*}
	\Lambda_0^\ell\!\paren{\frac{z^j}{R}}
	&= \Lambda_0^\ell\!\paren{\sum_{n \geq j} a(n - j) z^n}
	= \Lambda_0^\ell\!\paren{\sum_{n \geq \ceil{j/q^\ell}} a(q^\ell n - j) z^{q^\ell n}} \\
	&= \sum_{n \geq \ceil{j/q^\ell}} a(q^\ell n - j) z^n.
\end{align*}
We will use the fact that the series expansion of $\frac{1}{R}$ is periodic to rewrite $a(q^\ell n - j)$.
Since each $\deg R_k$ divides $\ell$, Proposition~\ref{product of monic irreducibles} implies that the polynomial $z^{-e_0} R$ divides $z^{q^\ell - 1} - 1$.
Write $1 - z^{q^\ell - 1} = R T$ where $T \in z^{-e_0} \field[z]$; then
\begin{equation}\label{rational expression}
	\frac{1}{R} = \frac{T}{1 - z^{q^\ell - 1}}.
\end{equation}
Since $r \geq 1$, $a(n)_{n \geq 0}$ is periodic by Lemma~\ref{periodic series}.
Moreover, $\deg T < q^\ell - 1$, so its period length divides $q^\ell - 1$.
Therefore $a(q^\ell n - j) = a((q^\ell n - j) \bmod (q^\ell - 1)) = a(n - j)$ for all $n \geq j$, so
\[
	\sum_{n \geq j} a(q^\ell n - j) z^n
	= \sum_{n \geq j} a(n - j) z^n
	= \frac{z^j}{R},
\]
and it follows that
\[
	\Lambda_0^\ell\!\paren{\frac{z^j}{R}}
	= \sum_{n = \ceil{j/q^\ell}}^{j - 1} a(q^\ell n - j) z^n + \frac{z^j}{R}.
\]
It remains to show that $a(q^\ell n - j) = 0$ for all $n \in \{\ceil{j/q^\ell}, \dots, j - 2, j - 1\}$.
If $j = 0$ or $j = 1$, this is vacuously true, so assume $j \in \{2, 3, \dots, r\}$.
We identify certain $0$ coefficients in the series $\frac{1}{R}$.
From Equation~\eqref{rational expression}, we obtain
\[
	\sum_{n \geq 0} a(n) z^n
	= \frac{1}{R}
	= \frac{T}{1 - z^{q^\ell - 1}}
	= T + T z^{q^\ell - 1} + T z^{2 (q^\ell - 1)} + \cdots.
\]
Since $\deg T = q^\ell - 1 - \deg R = q^\ell - 1 - r$, this implies $0 = a(q^\ell - r) = a(q^\ell - r + 1) = \dots = a(q^\ell - 2)$; that is, $a(q^\ell - i) = 0$ for all $i \in \{2, 3, \dots, r\}$.
For all $n \in \{\ceil{j/q^\ell}, \dots, j - 2, j - 1\}$, we have $j - n + 1 \in \{2, 3, \dots, j - \ceil{j/q^\ell} + 1\} \subseteq \{2, 3, \dots, r\}$.
Therefore, since the period length of $a(n)_{n \geq 0}$ divides $q^\ell - 1$, we have $a(q^\ell n - j) = a(q^\ell - (j - n + 1)) = 0$ for $n \in \{\ceil{j/q^\ell}, \dots, j - 2, j - 1\}$, as desired.
\end{proof}

In Theorem~\ref{square-free orbit size upper bound - positive degree} we assumed that $\deg R \geq 1$.
However in general $\deg R \geq -1$; the next result extends Theorem~\ref{square-free orbit size upper bound - positive degree}.

\begin{corollary}\label{square-free orbit size upper bound}
Let $R \in z^{-1} \field[z]$ be a nonzero square-free Laurent polynomial whose factorization into irreducibles is of the form $c z^{e_0} R_1 \cdots R_k$, where $e_0 \in \{-1, 0\}$.
Let $\ell = \lcm(\deg R_1, \dots, \deg R_k)$.
Define $\lambda_0$ on $\field[z]$ by $\lambda_0(S) = \Lambda_0\!\paren{S R^{q - 1}}$.
Then $\lambda_0^\ell(S) = S$ for all $S \in \field[z]$ with $\deg S \leq \deg R$.
\end{corollary}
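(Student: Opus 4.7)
The plan is to deduce Corollary~\ref{square-free orbit size upper bound} directly from Theorem~\ref{square-free orbit size upper bound - positive degree} by handling the two boundary cases $\deg R \in \{-1, 0\}$ that lie outside the hypothesis $\deg R \geq 1$ of that theorem. So the first step is simply to invoke Theorem~\ref{square-free orbit size upper bound - positive degree} whenever $\deg R \geq 1$, which takes care of the bulk of the statement.

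It then remains to inspect what the factorization $R = c z^{e_0} R_1 \cdots R_k$ can look like when $\deg R \leq 0$. If $\deg R = 0$, the only possibility is $k = 0$ and $e_0 = 0$, so $R$ is a nonzero constant $c \in \field^*$; then $\ell = \lcm() = 1$ by convention, any $S \in \field[z]$ with $\deg S \leq 0$ is a constant, and a direct computation gives $\lambda_0(S) = \Lambda_0(S c^{q-1}) = S c^{q-1} = S$ by Fermat's little theorem. If $\deg R = -1$, then necessarily $k = 0$ and $e_0 = -1$, so $R = c z^{-1}$; again $\ell = 1$, and the constraint $\deg S \leq -1$ combined with $S \in \field[z]$ forces $S = 0$, whence $\lambda_0(S) = 0 = S$ trivially.

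There is essentially no obstacle here: the corollary is a cosmetic extension of Theorem~\ref{square-free orbit size upper bound - positive degree} that removes the auxiliary hypothesis $\deg R \geq 1$, and the whole argument amounts to unwinding the conventions $\lcm() = 1$ (empty lcm) and $c^{q-1} = 1$ for $c \in \field^*$ in the two degenerate cases.
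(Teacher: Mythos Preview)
Your reduction to Theorem~\ref{square-free orbit size upper bound - positive degree} for $\deg R \geq 1$ and your treatment of $\deg R = -1$ are fine, but there is a genuine gap in the case $\deg R = 0$. You assert that the only possibility is $k = 0$ and $e_0 = 0$, i.e.\ $R = c$ constant. That is false: one can also have $e_0 = -1$, $k = 1$, and $\deg R_1 = 1$, giving $R = c z^{-1}(z - a)$ for some nonzero $a \in \field$, which has degree $0$. Concretely, $R = z^{-1} + 1$ is a square-free Laurent polynomial of degree $0$ with $e_0 = -1$ and one linear factor. In this situation $\ell = \lcm(1) = 1$, so you must still verify $\lambda_0(S) = S$ for each constant $S$, but your computation $\lambda_0(S) = \Lambda_0(S c^{q-1})$ no longer applies since $R^{q-1}$ is not a constant.

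The paper's proof does handle this case: it writes $R = b z^{-1} + c$ with $c \neq 0$ (allowing $b \neq 0$), uses Proposition~\ref{Cartier} to rewrite $\lambda_0(S) = S\,\Lambda_0(1/R)\,R$, and then checks by a short power series computation that $\Lambda_0(1/R) = 1/R$. An alternative repair, closer in spirit to your argument, is to expand $R^{q-1} = (b z^{-1} + c)^{q-1} = \sum_{j=0}^{q-1} \binom{q-1}{j} b^{j} c^{\,q-1-j} z^{-j}$ and observe that $\Lambda_0$ retains only the $j = 0$ term, yielding $\Lambda_0(R^{q-1}) = c^{q-1} = 1$ and hence $\lambda_0(S) = S$. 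Either way, the missing sub-case needs to be addressed.
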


\begin{proof}
Let $r \colonequal \deg R$.
Theorem~\ref{square-free orbit size upper bound - positive degree} covers the case $r \geq 1$.
If $r = -1$, then $S = 0$ and the conclusion holds.

Suppose $r = 0$, so that $R = b z^{-1} + c$ for some $b, c \in \field$ with $c \neq 0$.
Here $\ell = 1$.
Let $S \in \field$.
By Proposition~\ref{Cartier}, $\lambda_0(S) = \Lambda_0(S R^{q - 1}) = S \Lambda_0(\frac{1}{R}) R$.
We show that $\Lambda_0(\frac{1}{R}) = \frac{1}{R}$, which will imply $\lambda_0(S) = S$.
If $b = 0$, then
\[
	\Lambda_0\!\left( \frac{1}{R} \right)
	= \Lambda_0\!\left( \frac{1}{c} \right)
	= \frac{1}{c}
	= \frac{1}{R}.
\]
If $b\neq 0$, then
\begin{align*}
	\Lambda_0\!\left( \frac{1}{R} \right)
	&= \Lambda_0\!\left( \frac{z}{b (1 - (-c/b) z)} \right)
	= \Lambda_0\!\left(\frac{1}{b} \sum_{n\geq 0} (-c/b)^n z^{n+1} \right) \\
	&= \frac{1}{b} \sum_{n\geq 1} (-c/b)^{nq - 1} z^n
	= \frac{1}{b} \sum_{n\geq 0} (-c/b)^n z^{n + 1}
	= \frac{1}{R}
\end{align*}
since $n q - 1 \equiv n - 1 \mod (q - 1)$.
\end{proof}

Proposition~\ref{univariate emulation} tells us that the orbit of $y \frac{\partial P}{\partial y}$ under $\lambda_{0,0}$, when restricted to the left, right, and top borders of $V$, is dictated by the orbits of its projection onto these borders under $\lambda_0$, where the latter is defined using $A_0/y$, $A_h/y$, and $B_d$.
These orbits can be studied using Corollary~\ref{square-free orbit size upper bound} as follows.

\begin{example}\label{example orbits}
We continue to use the polynomial $P$ from Examples~\ref{example} and \ref{example projections}.
The initial state is $S_0 = y \frac{\partial P}{\partial y}$, and the second state in the orbit of $S_0$ under $\lambda_{0, 0}$ is
\[
	S_1
	\colonequal \lambda_{0, 0}(S_0)
	= x y^3 + (x^2 + x + 1) y^2 + (2 x^2 + 2) y + x^2 + x.
\]
In Example~\ref{example denominator projections}, we reduced to the univariate operator $\lambda_0$ with $R = A_0/y$, $R = A_h/y$, and $R = B_d$.
We verify that the conditions of Corollary~\ref{square-free orbit size upper bound} hold.
The factorizations into irreducibles of the three Laurent polynomials $R$ are
\begin{align*}
	A_0/y &= 2 y^3 + y + 1 = 2 (y^3 + 2 y + 2) \\
	A_h/y &= y^3 + 1 + 2y^{-1} = y^{-1} (y^4 + y + 2) \\
	B_d &= x^2 + x + 2.
\end{align*}
Moreover, $\deg \pi\subl (S_1) \leq \deg(A_0/y)$, $\deg \pi\subr (S_1) \leq \deg(A_h/y)$, and $\deg \pi\subt (S_1) \leq \deg B_d$.
Therefore, by Corollary~\ref{square-free orbit size upper bound}, the three relevant orbits under the three operators $\lambda_0$ are periodic and have respective period lengths dividing $3$, $4$, and $2$.
For $R = A_0/y$, the orbit of $\pi\subl (S_1)$ under $\lambda_0$ is
\[
	y^2 + 2 y, \, y^2 + y, \, y^2, \, y^2 + 2 y, \, \dots.
\]
For $R = A_h/y$, the orbit of $\pi\subr (S_1)$ under $\lambda_0$ is
\[
	y^2 + 2 y + 1, \, y^2 + y + 1, \,y^2, \, 1, \, y^2 + 2 y + 1, \, \dots.
\]
Lastly, for $R = B_d$, the orbit of $\pi\subt (S_1)$ under $\lambda_0$ is
\[
	x, \, 2 x, \, x, \, \dots.
\]
In particular, the upper bounds on the period lengths are attained.
\end{example}

It remains to remove the restriction that $R$ is square-free.
Unlike the square-free case, the orbit of $S$ under $\lambda_0$ may have a transient (in other words, may be eventually periodic but not periodic).
First we give two propositions showing that elements sufficiently far out in the orbit are necessarily divisible by a certain polynomial; if $S$ is not divisible by this polynomial then the orbit has a transient.

\begin{proposition}\label{non-square-free transient size upper bound}
Let $R \in z^{-1} \field[z]$ be a nonzero Laurent polynomial such that $R = F^e G$ for some $F \in \field[z]$, $G \in z^{-1} \field[z]$, and $e\ge 1$.
For all $S \in \field[z]$ and all $n \geq \ceil{\log_q e}$, the polynomial $\lambda_0^n(S)$ is divisible by $F^{e - 1}$.
\end{proposition}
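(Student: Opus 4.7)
The plan is to track how divisibility of $\lambda_0^n(S)$ by powers of $F$ evolves with $n$. I will prove by induction on $n$ that $\lambda_0^n(S)$ is divisible by $F^{m_n}$, where
\[
	m_0 = 0
	\qquad\text{and}\qquad
	m_{n+1} = \floor{\tfrac{m_n + e(q-1)}{q}}.
\]
The base case is trivial. For the inductive step, write $\lambda_0^n(S) = F^{m_n} U$ with $U \in \field[z]$. Expanding $R^{q-1} = F^{e(q-1)} G^{q-1}$ and dividing $m_n + e(q-1) = q\, m_{n+1} + j$ with $0 \leq j \leq q - 1$, Proposition~\ref{Cartier} applied to the factor $F^{q\, m_{n+1}}$ gives
\[
	\lambda_0^{n+1}(S)
	= \Lambda_0\!\paren{F^{m_n + e(q-1)} U G^{q-1}}
	= F^{m_{n+1}}\, \Lambda_0\!\paren{F^{j} U G^{q-1}},
\]
so $F^{m_{n+1}}$ divides $\lambda_0^{n+1}(S)$ as claimed.

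It remains to show that $m_n \geq e - 1$ whenever $n \geq \ceil{\log_q e}$. A quick induction using the identity $\lceil \lceil a/q \rceil / q \rceil = \lceil a/q^2 \rceil$ yields the closed form $m_n = e - \ceil{e/q^n}$. Since $e \geq 1$, as soon as $q^n \geq e$ we have $\ceil{e/q^n} = 1$ and hence $m_n = e - 1$. The inequality $q^n \geq e$ is equivalent to $n \geq \ceil{\log_q e}$, completing the argument.

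I do not foresee a serious obstacle. The only delicate aspects are (i) confirming that Proposition~\ref{Cartier} applies when $G$ is merely a Laurent polynomial rather than a polynomial (which it does, since the proposition is stated for Laurent series), and (ii) bookkeeping the floors and ceilings carefully enough to obtain the precise bound $\ceil{\log_q e}$ instead of a slightly weaker one. One could alternatively avoid the closed-form solution by observing that $e - 1$ is a fixed point of $f(x) = \floor{(x + e(q-1))/q}$ and invoking a variant of Lemma~\ref{iterating floor}, but the direct induction above seems cleanest.
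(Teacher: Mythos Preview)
Your proof is correct and follows essentially the same approach as the paper's: both pull out the factor $F^{q\lfloor\cdot\rfloor}$ via Proposition~\ref{Cartier} and track the recursion $f(x)=\lfloor (x+e(q-1))/q\rfloor$. The only cosmetic differences are that the paper allows an arbitrary initial exponent $s\ge 0$ (rather than your $m_0=0$) and computes the iterate as $e+\lfloor (s-e)/q^n\rfloor$ via Lemma~\ref{iterating floor}, which for $s=0$ agrees with your $e-\lceil e/q^n\rceil$ since $\lfloor -x\rfloor=-\lceil x\rceil$.
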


Note that there are potentially multiple ways to decompose $R$ in Proposition~\ref{non-square-free transient size upper bound}.
For example, if $R = z^4$ then we could write $F = z, e = 4, G = 1$ or $F = z, e = 5, G = z^{-1}$.
The latter choice leads to a stronger conclusion regarding divisibility.

\begin{proof}[Proof of Proposition~\ref{non-square-free transient size upper bound}]
Let $S \in \field[z]$, and write $S = F^s T$ for some $s \geq 0$.
(We do not require $s$ to be maximal.)
We have
\begin{align*}
	\lambda_0(S)
	= \Lambda_0(S R^{q - 1})
	&= \Lambda_0\!\paren{F^s T F^{e (q - 1)} G^{q - 1}} \\
	&= \Lambda_0\!\paren{F^{(s + e (q - 1)) \bmod q} T G^{q - 1}} F^{\floor{(s + e (q - 1))/q}}
\end{align*}
by Proposition~\ref{Cartier}.
Therefore $\lambda_0(S)$ is divisible by $F^{\floor{(s + e (q - 1))/q}} = F^{e + \floor{(s - e)/q}}$, so we iterate the function $f(x) = e + \floor{\frac{x - e}{q}}$.
Let $n \geq \ceil{\log_q e}$, so that $\floor{-\frac{e}{q^n}} = -1$.
As in the proof of Lemma~\ref{iterating floor}, we have $f^n(s) = e + \floor{\frac{s - e}{q^n}} \geq e + \floor{-\frac{e}{q^n}} = e - 1$.
Therefore $\lambda_0^n(S)$ is divisible by $F^{e - 1}$.
\end{proof}

\begin{example}
Let $q = 3$ and $R = z^{-1} (z + 1)^3 (z + 2)$.
The orbit of $S = 1$ under $\lambda_0$ is $1, (z + 1)^2, (z + 1)^2, \dots$.
It has transient length $1$ (and period length $1$).
\end{example}

If $F = z$ and if $G$ is a polynomial, then we can slightly increase the exponent to which $F$ eventually divides elements in the orbit under $\lambda_0$.
We do this by tracking $0$ coefficients rather than using Proposition~\ref{Cartier}, which we used to prove Proposition~\ref{non-square-free transient size upper bound}.

\begin{proposition}\label{power of y transient size upper bound}
Let $R \in \field[z]$ be a nonzero polynomial such that $R = z^e G$ for some $G \in \field[z]$ where $e \geq 1$ and $G$ is not divisible by $z$.
For all $S \in \field[z]$ and all $n \geq \floor{\log_q e} + 1$, the polynomial $\lambda_0^n(S)$ is divisible by $z^e$.
\end{proposition}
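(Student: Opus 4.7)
My plan is to track the $z$-adic valuation of elements in the orbit. Let $v(S)$ denote the valuation of a nonzero $S \in \field[z]$ at $z$, with the convention $v(0) = \infty$. If $S = 0$ the claim is vacuous, so assume $S \neq 0$ and write $S = z^s T$ with $T \in \field[z]$ satisfying $T(0) \neq 0$. Since $R^{q - 1} = z^{e (q - 1)} G^{q - 1}$, we have $S R^{q - 1} = z^{s + e (q - 1)} T G^{q - 1}$. Writing $s + e (q - 1) = q m + r$ with $0 \leq r \leq q - 1$ and applying Proposition~\ref{Cartier} to the factor $(z^m)^q$, we get
\[
	\lambda_0(S) = z^m \Lambda_0\!\paren{z^r T G^{q - 1}}.
\]

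The crux is a case analysis on $r$ that upgrades the naive floor estimate to a ceiling. If $r = 0$, then the constant term of $\Lambda_0(T G^{q - 1})$ equals $T(0) G(0)^{q - 1}$, which is nonzero since by hypothesis neither $T$ nor $G$ is divisible by $z$; hence $v(\lambda_0(S)) = m = (s + e (q - 1))/q$. If $r \geq 1$, the polynomial $z^r T G^{q - 1}$ has zero constant term, so $\Lambda_0(z^r T G^{q - 1})$ is divisible by $z$, giving $v(\lambda_0(S)) \geq m + 1$. In both cases,
\[
	v(\lambda_0(S)) \geq \ceil{\tfrac{v(S) + e (q - 1)}{q}}.
\]

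Let $g(x) \colonequal \ceil{(x + e (q - 1))/q}$. Since $g$ is monotonically nondecreasing, a straightforward induction gives $v(\lambda_0^n(S)) \geq g^n(v(S)) \geq g^n(0)$ for all $n \geq 0$. The second statement of Lemma~\ref{iterating floor} applied with $k = e \geq 1$ yields $g^n(0) \geq e$ whenever $n \geq \floor{\log_q e} + 1$, so $v(\lambda_0^n(S)) \geq e$, completing the proof.

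The substantive point (and what separates this from Proposition~\ref{non-square-free transient size upper bound}, which only gives divisibility by $F^{e - 1}$) is securing the extra factor of $z$, i.e.\ replacing the floor in the iterated bound by a ceiling. This is precisely where the strengthened hypothesis that $G$ is an honest polynomial, not merely an element of $z^{-1} \field[z]$, plays its role: it forces $G(0) \neq 0$, which in the $r = 0$ case prevents the constant term from accidentally vanishing, while the $r \geq 1$ case contributes the single unit of ``ceiling advantage'' that iterates to the desired upgrade.
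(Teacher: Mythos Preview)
Your proof is correct and follows the same approach as the paper: track the $z$-adic valuation, establish the one-step bound $v(\lambda_0(S)) \geq \ceil{(v(S) + e(q-1))/q}$, and then iterate via the second part of Lemma~\ref{iterating floor} with $k = e$. The paper obtains the ceiling bound in a single stroke (a polynomial divisible by $z^N$ has $\Lambda_0$-image divisible by $z^{\ceil{N/q}}$), so your case split on $r$ and the appeal to $G(0)\neq 0$ in the $r=0$ branch are a bit more than is needed for the inequality, but harmless.
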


\begin{proof}
We use the fact that if a polynomial is divisible by $z^\nu$ then, when we apply $\Lambda_0$, we get a polynomial that is divisible by $z^{\ceil{\nu/q}}$.
Let $S \in \field[z]$, and write $S = z^s T$ where $s \geq 0$ and $T$ is not divisible by $z$.
We have
\[
	\lambda_0(S)
	= \Lambda_0(S R^{q - 1})
	= \Lambda_0\!\paren{z^{s + e (q - 1)} T G^{q - 1}}.
\]
Since $z^{s + e (q - 1)} T G^{q - 1}$ is divisible by $z^{e q + s - e}$, it follows that $\lambda_0(S)$ is divisible by $z^{e + \ceil{\frac{s - e}{q}}}$.
Let $f(x) = e + \ceil{\frac{x - e}{q}}$.
Applying Lemma~\ref{iterating floor}, if $n \geq \floor{\log_q e} + 1$ then $\lambda_0^n(S)$ is divisible by $z^e$.
\end{proof}

In the following theorem, we show that the situation for a general (not necessarily square-free) element $R \in z^{-1} \field[z]$ reduces to Corollary~\ref{square-free orbit size upper bound} by Propositions~\ref{non-square-free transient size upper bound} and \ref{power of y transient size upper bound}.
The idea of the proof is that if $R$ is divisible by $F^e$, then every application of $\lambda_0$ pushes the image into a smaller vector space until we are emulating the map $\lambda_0$ for a square-free polynomial $R'$.
We define $\log_q 0 = - \infty$, $\floor{- \infty} = - \infty$, $\ceil{- \infty} = - \infty$, and $\max() = 0$.
(When $\deg R = -1$, the only polynomial $S$ satisfying $\deg S \leq \deg R$ is $S = 0$, so the theorem does not say much in this case.)

\begin{theorem}\label{univariate orbit size upper bound}
Let $R \in z^{-1} \field[z]$ be a nonzero Laurent polynomial.
Let $R = c z^{e_0} R_1^{e_1} \cdots R_k^{e_k}$ be its factorization into irreducibles.
Let
\begin{equation}\label{transient time}
	t = \max\!\left(
		\floor{\log_q \max(e_0, 0)} + 1,
		\ceil{\log_q \max(e_1, \dots, e_k)},
		0
	\right)
\end{equation}
and $\ell = \lcm(\deg R_1, \dots, \deg R_k)$.
Define $\lambda_0$ on $\field[z]$ by $\lambda_0(S) = \Lambda_0\!\paren{S R^{q - 1}}$.
For all $S \in \field[z]$ with $\deg S \leq \deg R$, the orbit size of $S$ under $\lambda_0$ is at most $t + \ell$.
\end{theorem}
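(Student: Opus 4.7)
The plan is to reduce to the square-free case already handled in Corollary~\ref{square-free orbit size upper bound}, by showing that after $t$ applications of $\lambda_0$ the orbit lies in the principal ideal generated by a specific Laurent polynomial $U$, and that on the quotient $\lambda_0$ emulates the square-free operator associated with $R'' \colonequal R/U$.

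Let $U \colonequal z^{\max(e_0, 0)} R_1^{e_1 - 1} \cdots R_k^{e_k - 1}$ and $R'' \colonequal R/U = c\, z^{e_0 - \max(e_0, 0)} R_1 \cdots R_k$. Then $R''$ is square-free and its $z$-exponent $e_0 - \max(e_0, 0)$ lies in $\{-1, 0\}$, so $R''$ satisfies the hypotheses of Corollary~\ref{square-free orbit size upper bound}. I would first apply Proposition~\ref{non-square-free transient size upper bound} to each factorization $R = R_i^{e_i} \cdot (R/R_i^{e_i})$, yielding $R_i^{e_i - 1} \mid \lambda_0^n(S)$ whenever $n \geq \ceil{\log_q e_i}$. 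When $e_0 \geq 1$, I would additionally apply Proposition~\ref{power of y transient size upper bound} with $F = z$ and $e = e_0$, giving $z^{e_0} \mid \lambda_0^n(S)$ whenever $n \geq \floor{\log_q e_0} + 1$. Combining, $U \mid \lambda_0^n(S)$ for every $n \geq t$.

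For $n \geq t$, write $\lambda_0^n(S) = U\, T_n$. The key algebraic identity is $R^{q - 1} = U^{q - 1} (R'')^{q - 1}$, which together with Proposition~\ref{Cartier} gives
\[
	\lambda_0^{n + 1}(S)
	= \Lambda_0\!\paren{U^q \cdot T_n (R'')^{q - 1}}
	= U \cdot \Lambda_0\!\paren{T_n (R'')^{q - 1}}
	= U \cdot \tilde\lambda_0(T_n),
\]
where $\tilde\lambda_0(T) \colonequal \Lambda_0\!\paren{T (R'')^{q - 1}}$ is the operator associated with the square-free Laurent polynomial $R''$. Hence $T_{n + 1} = \tilde\lambda_0(T_n)$, so the tail of the orbit of $S$ under $\lambda_0$ is driven by the orbit of $T_t$ under $\tilde\lambda_0$.

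To apply Corollary~\ref{square-free orbit size upper bound} to $T_t$, I need $\deg T_t \leq \deg R''$. A routine induction using $\deg \lambda_0(S') \leq \frac{\deg S' + (q - 1) \deg R}{q}$ shows that $\deg \lambda_0^n(S) \leq \deg R$ for all $n \geq 0$, so $\deg T_t = \deg \lambda_0^t(S) - \deg U \leq \deg R - \deg U = \deg R''$, as required. Corollary~\ref{square-free orbit size upper bound} then yields $\tilde\lambda_0^\ell(T_t) = T_t$, and therefore $\lambda_0^{t + \ell}(S) = \lambda_0^t(S)$. The orbit of $S$ consists of at most $t$ transient terms followed by a period of length at most $\ell$, for a total of at most $t + \ell$ distinct terms. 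The only delicate point is consistent bookkeeping across the three cases $e_0 = -1$, $e_0 = 0$, and $e_0 \geq 1$ so that $R''$ remains square-free with $z$-exponent in $\{-1, 0\}$, and handling the boundary cases $k = 0$ or $\max(e_1, \dots, e_k) = 0$ via the conventions $\log_q 0 = -\infty$ and $\max() = 0$ built into the definition of $t$.
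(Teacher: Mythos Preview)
Your proof is correct and follows essentially the same route as the paper's: your $R''$ coincides with the paper's $\rad R$ (since $e_0-\max(e_0,0)=\min(e_0,0)$), your $U$ is the same, and your $\tilde\lambda_0$ is the paper's $\kappa_0$, with the reduction to Corollary~\ref{square-free orbit size upper bound} carried out in the same way via Propositions~\ref{non-square-free transient size upper bound} and~\ref{power of y transient size upper bound}. The only cosmetic differences are that the paper writes out the exponent identity $e_i-1+e_i(q-1)=q-1+(e_i-1)q$ explicitly rather than invoking $R^{q-1}=U^{q-1}(R'')^{q-1}$, and it computes $\deg\rad R$ directly rather than phrasing the degree bound as an induction.
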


\begin{proof}
Define the \emph{radical} of $R$ by $\rad R = c z^{\min(e_0, 0)} R_1 \cdots R_k$.
Let
\[
	U = z^{\max(e_0, 0)} R_1^{e_1 - 1} \cdots R_k^{e_k - 1}
\]
so that $U \rad R = R$.
Let $S \in \field[z]$ with $\deg S \leq \deg R$.
We show that the orbit of $S$ under $\lambda_0$ is eventually periodic with transient length at most $t$ and period length dividing $\ell$.

We claim that $\lambda_0^t(S) = T U$ for some $T \in \field[z]$ satisfying $\deg T \leq \deg \rad R$.
To see that $R_i^{e_i - 1}$ divides $\lambda_0^t(S)$ for each $i \in \{1, 2, \dots, k\}$, we apply Proposition~\ref{non-square-free transient size upper bound} with $F = R_i$.
If $e_0 \in \{-1, 0\}$, then $z^{\max(e_0, 0)} = 1$.
If $e_0 \geq 1$, then Proposition~\ref{power of y transient size upper bound} implies that $z^{\max(e_0, 0)} = z^{e_0}$ divides $\lambda_0^t(S)$.
To see that $\deg T \leq \deg \rad R$, we first observe that the definition of $\lambda_0$ and the assumption that $\deg S \leq \deg R$ imply $\deg \lambda_0^t(S) \leq \deg R$.
Therefore
\begin{align*}
	\deg T
	&= \deg \lambda_0^t(S) - \deg U \\
		& \leq \deg R - \deg U \\
	&= \paren{e_0 + \sum_{i = 1}^k e_i \deg R_i} -\paren{\max(e_0, 0) + \sum_{i = 1}^k (e_i - 1) \deg R_i} \\
	&= \min(e_0, 0) + \sum_{i = 1}^k \deg R_i
	= \deg \rad R.
\end{align*}
This completes the proof of the claim.

Next we use the identity $e_i - 1 + e_i (q - 1) = e_i q - 1 = q - 1 + (e_i - 1) q$.
For all $T \in \field[z]$ (and in particular for the $T$ satisfying $\lambda_0^t(S) = T U$),
\begin{align*}
	\lambda_0(T U)
	= \Lambda_0\!\paren{T U R^{q - 1}}
	&= \Lambda_0\!\paren{T c^{q - 1} z^{\max(e_0, 0) + e_0 (q - 1)} R_1^{e_1 q - 1} \cdots R_k^{e_k q - 1}} \\
	&= \Lambda_0\!\paren{T c^{q - 1} z^{\min(e_0, 0) (q - 1)} R_1^{q - 1} \cdots R_k^{q - 1} U^q} \\
	&= \Lambda_0\!\paren{T \, (\rad R)^{q - 1}} U
\end{align*}
by Proposition~\ref{Cartier}.
Accordingly, define $\kappa_0 \colon \field[z] \to \field[z]$ by $\kappa_0(T) = \Lambda_0(T \, (\rad R)^{q - 1})$, so that $\lambda_0(T U) = \kappa_0(T) U$.
Iterating, we have $\lambda_0^\ell(T U) = \kappa_0^\ell(T) U$.
Applying Corollary~\ref{square-free orbit size upper bound} to $\kappa_0$, we have $\kappa_0^\ell(T) = T$ since $\rad R$ is square-free and $\deg T \leq \deg \rad R$.
Therefore
\[
	\lambda_0^{t + \ell}(S)
	= \lambda_0^\ell\!\paren{\lambda_0^t(S)}
	= \lambda_0^\ell\!\paren{T U}
	= \kappa_0^\ell(T) U
	= T U
	= \lambda_0^t(S),
\]
so the orbit of $S$ under $\lambda_0$ contains at most $t + \ell$ elements.
\end{proof}

\section{Orbit size under $\lambda_{0, 0}$}\label{section: orbit size}

In this section, we prove Theorem~\ref{kernel size upper bound}.
Our aim is to bound the size of the $q$-kernel for $F = \sum_{n \geq 0} a(n) x^n \in \field\doublebracket{x}$, which satisfies $P(x, F) = 0$.
From Corollary~\ref{kernel preliminary upper bound}, it remains to bound $\size{\orb_{\Lambda_0}(F)}$, equivalently, $\size{\orb_{\lambda_{0,0}}(S_0)}$ where $S_0 = y \frac{\partial P}{\partial y}$.

To do this, we will use Theorem~\ref{univariate orbit size upper bound} to obtain bounds on orbit sizes under $\lambda_{0,0}$.
We need the following lemma, which bounds the degree of the three border polynomials of $\lambda_{0,0}\!\left(y \frac{\partial P}{\partial y}\right)$.
Recall the definitions of $A_i$ and $B_j$ from Equation~\eqref{equation: P form}, that $\deg(A_0/y) \geq 0$, and that $A_h$ and $B_d$ are nonzero.

\begin{lemma}\label{right degree}
Let $S_0 = y \frac{\partial P}{\partial y}$.
Then
\begin{enumerate}
\item
$\deg \pi\subl(\lambda_{0, 0}(S_0)) \leq \deg(A_0/y)$,
\item
$\deg \pi\subr(\lambda_{0, 0}(S_0)) \leq \deg(A_h/y)$, and
\item
$\deg \pi\subt (\lambda_{0,0}^n(S_0)) \leq \deg B_d$ for all $n \geq \floor{\log_q h} + 2$.
\end{enumerate}
\end{lemma}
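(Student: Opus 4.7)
The plan is to reduce each of the three parts to Propositions~\ref{univariate emulation}, \ref{transient}, and \ref{fixed point}. Writing $P = \sum_{i = 0}^h x^i A_i(y) = \sum_{j=0}^d B_j(x) y^j$, the derivative $\frac{\partial P}{\partial y} = \sum_i x^i A_i'(y)$ immediately gives that the $x^0$- and $x^h$-components of $S_0 = y \frac{\partial P}{\partial y}$ are $y A_0'(y)$ and $y A_h'(y)$, respectively. Hence $\pi\subl(S_0) = y A_0'(y)$ and $\pi\subr(S_0) = y A_h'(y)$.

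For parts~(1) and~(2), I would apply Proposition~\ref{univariate emulation}(1) and~(2) (the latter using $\deg_x S_0 \leq h$) to identify $\pi\subl(\lambda_{0, 0}(S_0))$ and $\pi\subr(\lambda_{0, 0}(S_0))$ with $\lambda_0(y A_0'(y))$ and $\lambda_0(y A_h'(y))$, respectively, where $R = A_0/y$ in the first case and $R = A_h/y$ in the second. Setting $r = \deg R$ in each case, the input to $\lambda_0$ has degree at most $r + 1$, so the product $S\cdot R^{q-1}$ inside $\Lambda_0$ has degree at most $qr + 1$. Since $\Lambda_0$ drops the degree by a factor of $q$ and rounds down, $\deg \Lambda_0(\cdot) \leq \floor{(qr + 1)/q} = r$, giving the desired bound.

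For part~(3), Proposition~\ref{univariate emulation}(3) does not apply to $S_0$ directly, since $\deg_y S_0$ may equal $d$. However, Proposition~\ref{transient} supplies $\deg_x \lambda_{0, 0}(S_0) \leq h$ and $\deg_y \lambda_{0, 0}(S_0) \leq d - 1$, so $\deg \pi\subt(\lambda_{0, 0}(S_0)) \leq h$. Iterating Proposition~\ref{univariate emulation}(3) then yields $\pi\subt(\lambda_{0, 0}^n(S_0)) = \lambda_0^{n - 1}(\pi\subt(\lambda_{0, 0}(S_0)))$ for all $n \geq 2$, with $R = B_d$ and $z = x$. Writing $s = \deg B_d$, I would split into two cases. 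If $h \leq s$, a direct degree bound for $\lambda_0$ preserves $\deg \leq s$ across every iteration. If $h > s$, Proposition~\ref{fixed point} guarantees that $m \geq \floor{\log_q(h - s)} + 1$ iterations of $\lambda_0$ bring the degree down to at most $s$. Since $h - s \leq h$ in the second case (using $s \geq 0$ because $B_d \neq 0$), the unified threshold $n - 1 \geq \floor{\log_q h} + 1$ handles both.

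The main obstacle is the bookkeeping in part~(3): tracking the case split on whether $h$ exceeds $s$ and confirming that the stated threshold $\floor{\log_q h} + 2$ absorbs both cases. Parts~(1) and~(2) amount to a single degree computation applied twice, once the emulation of $\lambda_{0, 0}$ by $\lambda_0$ from Proposition~\ref{univariate emulation} has been invoked.
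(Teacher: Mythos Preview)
Your proposal is correct and follows essentially the same approach as the paper: reduce via Proposition~\ref{univariate emulation} to the univariate $\lambda_0$, then bound degrees directly for parts~(1)--(2) and invoke Proposition~\ref{fixed point} after one application of Proposition~\ref{transient} for part~(3). The only cosmetic difference is that the paper computes $\pi\subt(\lambda_{0,0}(S_0))$ explicitly as $\Lambda_0\!\bigl((qd-1)B_{d-1}B_d^{\,q-1}\bigr)$ before bounding its degree by $h$, whereas you obtain $\deg\pi\subt(\lambda_{0,0}(S_0))\le h$ directly from Proposition~\ref{transient}; your shortcut is sound and slightly cleaner.
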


\begin{proof}
For the first two statements, we will use
\[
	\pi\subl(S_0) = y \tfrac{d A_0}{d y} \mbox{\quad and \quad}
	\pi\subr(S_0) = y \tfrac{d A_h}{d y}.
\]

For the first statement, let $R = A_0/y$.
Part~\ref{left border} of Proposition~\ref{univariate emulation} gives $\pi\subl(\lambda_{0, 0}(S_0)) = \lambda_0(\pi\subl(S_0)) = \Lambda_0\!\paren{y \tfrac{d A_0}{d y} \cdot (A_0/y)^{q - 1}}$.
The degree of this polynomial is at most $\deg(A_0/y)$.

The second statement follows in the same way by applying Part~\ref{right border} of Proposition~\ref{univariate emulation} since $\deg_x S_0 \leq h$.

For the third statement, let $R = B_d$.
We cannot apply Part~\ref{top border} of Proposition~\ref{univariate emulation} to $S_0$, since we cannot guarantee $\deg_y S_0 \leq d - 1$.
Instead, we apply it to $\lambda_{0, 0}(S_0)$ since $\deg_y \lambda_{0, 0}(S_0) \leq d - 1$ by Proposition~\ref{transient}.
Let $
	S
	= \pi\subt(\lambda_{0, 0}(S_0))
	= \pi\subt\!\paren{\Lambda_{0, 0}\!\paren{S_0 Q^{q - 1}}}$.
All nonzero monomials in $\pi\subt\!\paren{\Lambda_{0, 0}\!\paren{S_0 Q^{q - 1}}}$ come from applying $\Lambda_0$ to terms in $(d - 1) B_{d - 1} \cdot B_d^{q - 1}$ or $d B_d \cdot (q - 1) B_{d - 1} B_d^{q - 2}$, so $S = \Lambda_0\!\paren{(q d - 1) B_{d - 1} B_d^{q - 1}}$.
We have $r \colonequal \deg R = \deg B_d \geq 0$ and $s \colonequal \deg S \leq h$.
By Proposition~\ref{fixed point}, if $n - 1 \geq \floor{\log_q h} + 1 \geq \floor{\log_q\max(s - r,1)} + 1$ then $\deg \pi\subt(\lambda_{0, 0}^n(S_0)) \leq \deg B_d$.
\end{proof}

\begin{example}\label{example transient}
For the polynomial $P$ in Examples~\ref{example} and \ref{example orbits}, it suffices to take $n = 1$ to achieve $\deg \pi\subt (\lambda_{0,0}^n(S_0)) \leq \deg B_d$, since
\[
	\deg \pi\subt(\lambda_{0,0}(S_0))
	= \deg \pi\subt(S_1)
	= \deg x
	\leq \deg(x^2 + x + 2)
	= \deg B_d.
\]
\end{example}

The eventual period lengths in Theorem~\ref{univariate orbit size upper bound} are bounded by $\lcm(\deg R_1, \dots, \deg R_k)$.
We will use the function $\Landaulcm(l, m, n)$ defined in Section~\ref{section: introduction} to obtain a bound that is independent of the factorizations of $A_0/y$, $A_h/y$, and $B_d$.
We rephrase Theorem~\ref{kernel size upper bound} in terms of $\ker_q(a(n)_{n\geq 0})$, since it has the same size as the minimal automaton for $a(n)_{n\geq 0}$.

\begin{main theorem}
Let $F = \sum_{n \geq 0} a(n) x^n \in \field\doublebracket{x} \setminus \{0\}$ be the Furstenberg series associated with a polynomial $P \in \field[x, y]$ of height $h$ and degree $d$.
Then
\[
	\size{\ker_q(a(n)_{n\geq 0})}
	\leq q^{h d} + q^{(h - 1) (d - 1)} \Landaulcm(h, d, d) + \floor{\log_q h} + \floor{\log_q \max(h, d)} + 3.
\]
\end{main theorem}

\begin{proof}
By Corollary~\ref{kernel preliminary upper bound}, $\size{\ker_q(a(n)_{n\geq 0})} \leq q^{h d} + \size{\orb_{\Lambda_0}(F)}$, so we now bound $\size{\orb_{\Lambda_0}(F)} \leq \size{\orb_{\lambda_{0, 0}}(S_0)}$.
We do this by emulating $\lambda_{0, 0}$ with the appropriate univariate operators $\lambda_0$ on the left, right, and top borders of $V$ and using a crude upper bound for the rest.
Lemma~\ref{right degree} and Proposition~\ref{univariate emulation} will allow us to do this.

We use the following fact.
Let $V$ be a finite vector space with basis $\mathcal B$.
Let $(\mathcal B_1, \mathcal B_2)$ be a partition of $\mathcal B$, and let $U_1$ and $U_2$ be the subspaces generated by $\mathcal B_1$ and $\mathcal B_2$.
Let $\pi_{U_i}$ denote projection onto $U_i$ (for $i\in\{1,2\}$).
If $f \colon V \to V$ and $\tilde f \colon U_1 \to U_1$ are linear transformations satisfying $\pi_{U_1} \circ f = \tilde f \circ \pi_{U_1}$, then
\[
	f(x) = \pi_{U_1}(f(x)) + \pi_{U_2}(f(x)) = \tilde{f}(\pi_{U_1}(x)) + \pi_{U_2}(f(x)),
\]
so that $\size{\orb_f(x)} \leq \size{U_2} \cdot \size{\orb_{\tilde f} (\pi_{U_1}(x))}$ for all $x \in V$.

We apply this fact to $U_2 = V^\circ$, where $V^\circ$ is defined in Equation~\eqref{V interior definition}, and $f=\lambda_{0,0}$.
Here $U_1$ is the space generated by the union of the generators of the left, right, and top borders of $V$:
\[
	U_1= \left\langle \{x^0 y^j : 0 \leq j \leq d - 1 \} \cup \{x^h y^j : 0 \leq j \leq d - 1 \} \cup \{x^i y^{d-1} : 1 \leq i \leq h - 1 \}\right\rangle.
\]
To define $\tilde f$, note that Proposition~\ref{univariate emulation} gives us an operator $\tilde \lambda_0 \colon U_1 \to U_1$ that satisfies $\pi_{U_1} \circ \lambda_{0, 0} = \tilde \lambda_0 \circ \pi_{U_1}$.
(The operator $\tilde \lambda_0$ acts as the appropriate $\lambda_0$ on the three respective borders.)
Set $\tilde f = \tilde \lambda_0$.
The fact in the previous paragraph now implies $\size{\orb_{\lambda_{0,0}}(x)} \leq \size{V^\circ} \cdot \size{\orb_{\tilde \lambda_0} (\pi_{U_1}(x))}$ for all $x \in V$.
Let
\[
	t \colonequal \floor{\log_q h} + 2 + \floor{\log_q \max(h, d)} + 1;
\]
we will justify the definition of $t$ below.
Set $S_t\colonequal \lambda_{0,0}^t(y \frac{\partial P}{\partial y})$.
We have $S_t \in V$ by Proposition~\ref{transient} since $t\geq 1$.
Write $S_t = \pi_{V^\circ}(S_t) + T$ where $T \in U_1$.
Therefore, using $\size{V^\circ} = q^{(h - 1) (d - 1)}$, we have
\begin{equation}\label{intermediate}
	\size{\orb_{\lambda_{0,0}}(S_t)}
	\leq q^{(h - 1) (d - 1)} \cdot \size{\orb_{\tilde \lambda_0}(T)}.
\end{equation}
It remains to bound $\size{\orb_{\tilde \lambda_0}(T)}$.
We will do this by bounding the orbit sizes of the projections $\pi\subl(T)$, $\pi\subr (T)$, and $\pi\subt (T)$ under the respective operators $\lambda_0$, defined by the Laurent polynomials $R = A_0/y$ on the left border, $R = A_h/y$ on the right border, and $R = B_d$ on the top border.
Recall that $\deg(A_0/y) \leq d - 1$, $\deg(A_h/y) \leq d - 1$, and $\deg B_d \leq h$.
Equation~\eqref{transient time} in Theorem~\ref{univariate orbit size upper bound} will give transient lengths $t\subl$, $t\subr$, and $t\subt$ in terms of the factorization of $R = c z^{e_0} R_1^{e_1} \cdots R_k^{e_k}$.
These transient lengths are at most
\begin{align}\label{max transient}
	\max(t\subl, t\subr, t\subt)
	& \leq \max\!\left(
		\floor{\log_q \max(h,d-1)} + 1,
		\ceil{\log_q \max(h,d)}
	\right) \nonumber \\
	& \leq \floor{\log_q \max(h,d)} + 1;
\end{align}
here, the term $\floor{\log_q \max(h,d-1)} + 1$ involves $d-1$ whereas $\ceil{\log_q \max(h,d)}$ involves $d$ because $R \in z^{-1} \field[z]$ and $e_0$ is the exponent of the $z$ factor.
Since $t \geq \floor{\log_q h} + 2$, Lemma~\ref{right degree} and Proposition~\ref{univariate emulation} tell us that, on $U_1$, we can emulate the action of $\lambda_{0, 0}$ on $S_t$ with the three operators $\lambda_0$.
Since $\pi\subl(T) = \pi\subl(S_t)$, $\pi\subr(T) = \pi\subr(S_t)$, and $\pi\subt(T) = \pi\subt(S_t)$, we consider the sizes of the orbits
\begin{align*}
	\orb\subl(S_t) &= \{\lambda_0^n(\pi\subl(S_t)) : n\geq 0\} \\
	\orb\subr(S_t) &= \{\lambda_0^n(\pi\subr(S_t)) : n\geq 0\} \\
	\orb\subt(S_t) &= \{\lambda_0^n(\pi\subt(S_t)) : n\geq 0\}.
\end{align*}
We claim that these three orbits are periodic, i.e.\ have no transient;
this follows from the proof of Theorem~\ref{univariate orbit size upper bound} since $t \geq \floor{\log_q \max(h, d)} + 1$ and this is the maximum transient length by~\eqref{max transient}.
Lemma~\ref{right degree} implies $\deg S_t \leq \deg B_d$, so we can apply Theorem~\ref{univariate orbit size upper bound} with $R = B_d$ to $\pi\subt(S_t)$.
It tells us that $\size{\orb\subt(S_t)} = \lcm(\sigma)$ for some integer partition $\sigma \in \partitions(\deg B_d)$.
Similarly, for $\size{\orb\subl(S_t)}$ and $\size{\orb\subr(S_t)}$ we obtain integer partitions in $\partitions(1+\deg A_0/y ) = \partitions(\deg A_0)$ and $\partitions(1+\deg A_h/y) = \partitions(\deg A_h)$.

We now use
\begin{equation}\label{lcm upper bound}
	\size{\orb_{\tilde \lambda_0}(T)}
	\leq \lcm\!\big(\size{\orb\subl(S_t)}, \size{\orb\subr(S_t)}, \size{\orb\subt(S_t)}\big)
\end{equation}
and maximize over the orbit sizes that arise.
By Equation~\eqref{lcm upper bound} and the definition of $\Landaulcm$, we have $\size{\orb_{\tilde \lambda_0}(T)}\leq \Landaulcm(h, d, d)$ since $\deg A_0 \leq d$, $\deg A_h \leq d$, and $\deg B_d \leq h$.
Equation~\eqref{intermediate} gives
\[
	\size{\orb_{\lambda_{0,0}}(S_t)}
	\leq q^{(h - 1) (d - 1)} \Landaulcm(h, d, d).
\]
It follows that $\size{\orb_{\Lambda_0}(F)} \leq \size{\orb_{\lambda_{0,0}} (S_t)} + t \leq q^{(h - 1) (d - 1)} \Landaulcm(h, d, d) + t$ as desired.
\end{proof}

\begin{example}
We continue Examples~\ref{example} and \ref{example transient}, where $h = 2$ and $d = 4$.
We have $\Landaulcm(h, d, d) = 12 = \lcm(3, 4, 2)$.
Computing the orbit of $S_0$ under $\lambda_{0, 0}$, one finds that it has size $157$, consisting of $1$ transient state followed by a period with length $156 = 13 \cdot 12$.
This period length is less than the theoretical maximum $q^{(h - 1) (d - 1)} \Landaulcm(h, d, d) = 27 \cdot 12$.
The number of states in the constructed automaton is $5989 \approx 3^{7.917}$, which is on the order of the upper bound
\begin{align*}
	q^{h d} &+ q^{(h - 1) (d - 1)} \Landaulcm(h, d, d) + \floor{\log_q h} + \floor{\log_q \max(h, d)} + 3 \\
	&= 3^8 + 3^3 \cdot 12 + 4 \\
	&= 6889
	\approx 3^{8.044}.
\end{align*}
Minimizing the automaton reduces the number of states by $1$ to $5988$.
\end{example}

Asymptotically, we have the following.

\begin{main theorem - asymptotic}
Let $F = \sum_{n \geq 0} a(n) x^n \in \field\doublebracket{x}$ be the Furstenberg series associated with a polynomial $P \in \field[x, y]$ of height $h$ and degree $d$.
Then $\size{\ker_q(a(n)_{n\geq 0})}$ is in $(1 + o(1)) q^{h d}$ as any of $q$, $h$, or $d$ tends to infinity and the others remain constant.
\end{main theorem - asymptotic}

\begin{proof}
Recall that the conditions on a Furstenberg series guarantee that $d \geq 1$.
If $h = 0$, then the power series $F$ is the $0$ series, so $\size{\ker_q(a(n)_{n\geq 0})} = 1$.
Therefore we assume $h \geq 1$.

As before, let $\Landau(n)$ be the Landau function.
The set of triples of integer partitions of $h,d,d$ gives rise to a subset of integer partitions of $h+2d$.
Thus $\Landaulcm(h, d, d) \leq \Landau(h + 2 d)$.
By Theorem~\ref{kernel size upper bound},
\[
	\size{\ker_q(a(n)_{n\geq 0})}
	\leq q^{h d} + q^{(h - 1) (d - 1)} \Landau(h + 2 d) + \floor{\log_q h} + \floor{\log_q \max(h, d)} + 3.
\]
The expression $\floor{\log_q h} + \floor{\log_q \max(h, d)} + 3$ is clearly in $o(1) q^{h d}$.
It remains to show that $q^{(h - 1) (d - 1)} \Landau(h + 2 d)$ is also in $o(1) q^{h d}$.
Landau~\cite{Landau} proved that $\log \Landau(n) \sim \sqrt{n \log n}$, that is, $\Landau(n) = e^{(1 + \epsilon(n)) \sqrt{n \log n}}$, where $\epsilon(n) \to 0$ as $n \to \infty$.
Therefore
\[
	\frac{q^{(h - 1) (d - 1)} \Landau(h + 2 d)}{q^{h d}}
	= \frac{\Landau(h + 2 d)}{q^{h + d - 1}}
	= \frac{e^{(1+\epsilon(h + 2 d)) \sqrt{(h + 2 d) \log (h + 2 d)}}}{q^{h + d - 1}},
\]
and this tends to $0$ as any of $q$, $h$, or $d$ tends to infinity and the others remain constant.
\end{proof}

Bridy used a similar argument, also bounding the orbit size by $\Landau(h + 2 d)$~\cite[Proof of Theorem 1.2]{Bridy}.

\begin{example}
The factor $1 + o(1)$ cannot be removed from the bound in Theorem~\ref{kernel size asymptotic bound}.
Let $q = 2$, and consider
\[
	P = (x^3 + x^2 + 1) y^3 + (x^3 + 1) y^2 + (x^3 + x^2 + x + 1) y + x^3 + x^2 \in \F_2[x, y]
\]
with height~$h = 3$ and degree~$d = 3$.
The coefficient sequence $a(n)_{n \geq 0}$ of the series $F \in \F_2\doublebracket{x}$ satisfying $P(x, F) = 0$ is $0, 0, 1, 0, 0, 1, 0, 0, 0, 0, 1, 0, 1, 1, 0, 0, \dots$.
The constructed automaton has $532$ states.
Minimizing reduces the number of states by only $1$ to $531$, which is larger than $q^{h d} = 512$.
\end{example}

With the same techniques as in the proof of Theorem~\ref{kernel size upper bound}, one obtains the following result, which concerns diagonals of rational functions that are not necessarily of the form in Theorem~\ref{Furstenberg}.
To state it, we extend the function $\Landaulcm(n_1, n_2, n_3)$ from Section~\ref{section: introduction} to $\Landaulcm(n_1, n_2, n_3, n_4)$, defined analogously as the maximum value of $\lcm(\lcm(\sigma_1), \lcm(\sigma_2), \lcm(\sigma_3), \lcm(\sigma_4))$ over integer partitions $\sigma_i$ of integers in $\{1, 2, \dots, n_i\}$.
The reason for this is that Theorem~\ref{kernel size upper bound - diagonals} is symmetric in $x$ and $y$, unlike Theorem~\ref{Furstenberg}.
This symmetry leads to the appearance of $\Landaulcm(h, h, d, d)$ in Theorem~\ref{kernel size upper bound - diagonals} instead of $\Landaulcm(h, d, d)$ as in Theorem~\ref{kernel size upper bound}.

\begin{theorem}\label{kernel size upper bound - diagonals}
Let $P(x, y)$ and $Q(x, y)$ be polynomials in $\field[x,y]$ such that $Q(0,0) \neq 0$.
Let
\[
	F = \mathcal{D}\!\paren{\frac{P(x, y)}{Q(x, y)}},
\]
and write $F(x) = \sum_{n \geq 0} a(n) x^n$.
Let
\begin{align*}
	h &= \max(\deg_x P, \deg_x Q ) \\
	d &= \max(\deg_y P , \deg_y Q),
\end{align*}
and assume $h \geq 1$ and $d \geq 1$.
Then the size of $\ker_q(a(n)_{n \geq 0})$ is at most
\[
	q^{h d} + q^{(h-1)(d-1)} \Landaulcm(h, h, d, d)
	+ 2 \floor{\log_q \max(h, d)}
	+ 2.
\]
Consequently, $\size{\ker_q(a(n)_{n \geq 0})}$ is in $(1 + o(1)) q^{hd}$ as any of $q$, $h$, or $d$ tends to infinity and the others remain constant.
\end{theorem}

When comparing Theorem~\ref{kernel size upper bound - diagonals} to the rest of the paper, note that a Furstenberg series is the diagonal of a rational function, whose denominator is also called $Q$, but in Theorem~\ref{kernel size upper bound - diagonals} the denominator $Q$ has degree $d$ and not $d - 1$ as before.

The structure of the proof of Theorem~\ref{kernel size upper bound - diagonals} is similar to that of Theorem~\ref{kernel size upper bound}.
One difference is that the diagonal in Theorem~\ref{Furstenberg} contains expressions of the form $P(x y, y)$, which led us to shear and to consider the maps $\lambda_{r, 0}$ on Laurent polynomials.
For general diagonals, the symmetry in $x$ and $y$ means that no shearing is required, the relevant maps are $\lambda_{r, r}$, and no Laurent polynomials enter the picture.
We define the main objects and state the modifications of relevant results used in the proof.
With $h$ and $d$ defined as in Theorem~\ref{kernel size upper bound - diagonals}, let
\[
	V \colonequal \left\langle x^i y^j : \text{$0 \leq i \leq h$ and $0 \leq j \leq d $}\right\rangle
\]
and
\begin{equation*}
	V^\circ = \left\langle x^i y^j : \text{$1 \leq i \leq h - 1$ and $1 \leq j \leq d - 1$}\right\rangle.
\end{equation*}
The initial state of the automaton is $S_0 = P$.
Define $\pi\subl$ and $\pi\subr$ as in Section~\ref{section: structure}.
For a polynomial $S = \sum_{i, j} c_{i,j} x^i y^j$, define $\pi\subb(S) = \sum_i c_{i,0} x^i$ and $\pi\subt(S) = \sum_i c_{i,d} x^i$.
The following results are analogues of Proposition~\ref{univariate emulation} and Lemma~\ref{right degree}; their proofs are similar.

\begin{proposition}\label{univariate emulation - diagonals}
We have the following.
\begin{enumerate}
\item
Let $R =\pi\subl(Q)$.
For all $S \in \field[x, y]$,
\[
	\pi\subl(\lambda_{0, 0}(S))
	= \lambda_0(\pi\subl(S)).
\]
\item
Let $R =\pi\subr(Q)$.
For all $S \in \field[x, y]$ with height at most $h$,
\[
	\pi\subr(\lambda_{0, 0}(S))
	= \lambda_0(\pi\subr(S)).
\]
\item
Let $R = \pi\subb(Q)$.
For all $S \in \field[x, y]$,
\[
	\pi\subb(\lambda_{0, 0}(S))
	= \lambda_0(\pi\subb(S)).
\]
\item
Let $R = \pi\subt(Q)$.
For all $S \in \field[x, y]$ with degree at most $d$,
\[
	\pi\subt(\lambda_{0, 0}(S))
	= \lambda_0(\pi\subt(S)).
\]
\end{enumerate}
\end{proposition}

\begin{lemma}\label{right degree - diagonals}
Let
\begin{align*}
	u\subl &= \floor{\log_q \max\!\paren{d - \deg\pi\subl(Q), 1}} + 1 \\
	u\subr &= \floor{\log_q \max\!\paren{d - \deg\pi\subr(Q), 1}} + 1 \\
	u\subb &= \floor{\log_q \max\!\paren{h - \deg \pi\subb(Q), 1}} + 1\\
	u\subt &= \floor{\log_q \max\!\paren{h - \deg \pi\subt(Q), 1}} + 1.
\end{align*}
For all $S \in V$, we have
\begin{enumerate}
\item
$\deg\pi\subl(\lambda_{0, 0}^n(S)) \leq \deg\pi\subl(Q)$ for all $n \geq u\subl$,
\item
$\deg \pi\subr(\lambda_{0, 0}^n(S)) \leq \deg\pi\subr(Q)$ for all $n \geq u\subr$,
\item
$\deg \pi\subb(\lambda_{0,0}^n(S)) \leq \deg \pi\subb(Q)$ for all $n \geq u\subb$,
and
\item
$\deg \pi\subt(\lambda_{0,0}^n(S)) \leq \deg \pi\subt(Q)$ for all $n \geq u\subt$.
\end{enumerate}
\end{lemma}

With Proposition~\ref{univariate emulation - diagonals} and Lemma~\ref{right degree - diagonals}, one follows the proof of Theorem~\ref{kernel size upper bound} to prove Theorem~\ref{kernel size upper bound - diagonals}.
One of the terms $\floor{\log_q \max(h, d)} + 1$ in Theorem~\ref{kernel size upper bound - diagonals} comes from Lemma~\ref{right degree - diagonals} by bounding $u\subl, u\subr$ by $\floor{\log_q d} + 1$ and $u\subb, u\subt$ by $\floor{\log_q h} + 1$. The other term comes from Theorem~\ref{univariate orbit size upper bound}.

\section{Subspaces of univariate polynomials}\label{section: univariate conjectures}

In this section, we give two conjectures that were discovered in earlier attempts to prove results in Section~\ref{section: orbit size univariate} bounding the period length of an orbit under the linear operator $\lambda_0$.
They were not needed in the end, but they are interesting in their own right since they identify additional structure in $\lambda_0$.
For a polynomial $R \in \field[z]$, define $\lambda_0(S) = \Lambda_0\!\paren{S R^{q - 1}}$ as in Equation~\eqref{univariate little lambda definition}.

The first conjecture implies the conclusion of Theorem~\ref{square-free orbit size upper bound - positive degree}, given in Proposition~\ref{square-free orbit size upper bound - positive degree - conjecture conclusion} below.
For an integer $m \geq 1$, consider the set of polynomials $S \in \field[z]$ such that $\deg S \leq \deg R$ and $\lambda_0^m(S) = S$.
This set forms a vector space.

\begin{conjecture}\label{dimensions conjecture}
Let $R \in \field[z]$ such that $\deg R \geq 1$ and $R$ is not divisible by $z$.
Let $R = c R_1^{e_1} \cdots R_k^{e_k}$ be its factorization into irreducibles.
For every divisor $m$ of $\lcm(\deg R_1, \dots, \deg R_k)$, the vector space
\begin{equation}\label{univariate subspace}
	\{S \in \field[z] : \text{$\deg S \leq \deg R$ and $\lambda_0^m(S) = S$}\}
\end{equation}
has dimension $1 + \sum_{i = 1}^k \gcd(m, \deg R_i)$.
\end{conjecture}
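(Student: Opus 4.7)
The plan is to convert the problem to counting $\Lambda_0^m$-fixed points on a space of power series, decompose that space via partial fractions, and reduce the analysis on each irreducible component to a cyclic shift. Since $R(0) \neq 0$, the map $S \mapsto F \colonequal S/R$ is an $\field$-linear injection from $\{S \in \field[z] : \deg S \leq \deg R\}$ onto a subspace $U \subseteq \field\doublebracket{z}$ of dimension $\deg R + 1$. Iterating Proposition~\ref{Cartier} yields $\lambda_0^m(S) = \Lambda_0^m(S/R) \cdot R$, so the vector space in the conjecture is $\field$-linearly isomorphic to the $\Lambda_0^m$-fixed subspace of $U$.

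By partial fractions, every element of $U$ has a unique expansion $c + \sum_{i,j} a_{ij}/R_i^j$ with $c \in \field$ and $\deg a_{ij} < d_i \colonequal \deg R_i$, yielding
\[
	U = \field \cdot 1 \, \oplus \, \bigoplus_{i=1}^k \bigoplus_{j=1}^{e_i} U_{i,j},
	\qquad U_{i,j} \colonequal \{a/R_i^j : \deg a < d_i\}.
\]
For $T = a/R_i^j$, writing $T = (a R_i^{q\lceil j/q\rceil - j})/R_i^{q\lceil j/q\rceil}$ and applying the identity $\Lambda_0(G/F^q) = \Lambda_0(G)/F$ (which follows from Proposition~\ref{Cartier}) shows $\Lambda_0(U_{i,j}) \subseteq U_{i, \lceil j/q\rceil}$; the necessary degree bound on the numerator comes from $q\lceil j/q\rceil - j \leq q - 1$. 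Since $\lceil j/q\rceil < j$ whenever $j \geq 2$, iterating $\Lambda_0$ strictly decreases the exponent, so a nonzero $\Lambda_0^m$-fixed element of $U_{i,j}$ forces $j = 1$. The fixed subspace of $\Lambda_0^m$ on $U$ therefore sits inside $\field \cdot 1 \oplus \bigoplus_{i} U_{i,1}$, and the summand $\field \cdot 1$ is fixed pointwise, contributing $1$ to the dimension.

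To treat each $U_{i,1}$, I would base change to the algebraic closure $\overline{\field}$, where $R_i$ splits as $\prod_{j=0}^{d_i-1}(z - \alpha_i^{q^j})$ with distinct nonzero roots. A direct power-series computation gives the key identity
\[
	\Lambda_0\!\paren{\frac{1}{z - \beta}} = \frac{\beta^{q-1}}{z - \beta^q}.
\]
Setting $e_j \colonequal \alpha_i^{q^j - 1}/(z - \alpha_i^{q^j})$ produces an $\overline{\field}$-basis of $U_{i,1} \otimes_{\field} \overline{\field}$ on which $\Lambda_0$ acts as the cyclic shift $e_j \mapsto e_{j+1}$ (indices modulo $d_i$, using $\alpha_i^{q^{d_i}} = \alpha_i$ and $\alpha_i^{q^{d_i}-1} = 1$). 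A vector $\sum_j v_j e_j$ is fixed by the $m$-fold shift iff the coefficient sequence $(v_j)$ has cyclic period dividing $m$ modulo $d_i$, so the $\overline{\field}$-fixed subspace has dimension $\gcd(m, d_i)$. Because the $\Lambda_0^m$-fixed subspace of $U_{i,1}$ is $\field$-rational, its $\field$-dimension is likewise $\gcd(m, d_i)$.

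Summing across the components yields the claimed dimension $1 + \sum_{i=1}^k \gcd(m, d_i)$. The main obstacle will be Step 3: identifying the correctly normalized basis $(e_j)$ that diagonalizes $\Lambda_0$ over $\overline{\field}$ to a clean cyclic shift, along with the bookkeeping needed to verify that all coefficient sequences indeed live in $\overline{\field}[[z]]$ with the intended pole structure. I also note that the hypothesis $m \mid \lcm(d_1, \dots, d_k)$ appears unnecessary for the conclusion, since the formula depends only on $m \bmod \lcm(d_i)$ and the argument applies verbatim to every positive integer $m$.
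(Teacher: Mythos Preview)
The paper does not prove this statement; it is explicitly a \emph{conjecture}, presented in Section~\ref{section: univariate conjectures} as an open problem the authors encountered but did not need for their main results. There is therefore no proof in the paper to compare against.

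That said, your argument appears to resolve the conjecture. The conjugation $S \mapsto S/R$ turning $\lambda_0$ into $\Lambda_0$ is correct (and is the same device the paper uses in the proof of Theorem~\ref{square-free orbit size upper bound - positive degree}); the partial-fraction decomposition isolates each irreducible factor; and the reduction to $j=1$ via $\Lambda_0(U_{i,j}) \subseteq U_{i,\lceil j/q\rceil}$ goes through since the highest-index component of any $\Lambda_0^m$-fixed vector would otherwise be killed. Your key identity $\Lambda_0\bigl(1/(z-\beta)\bigr) = \beta^{q-1}/(z-\beta^q)$ checks out by a one-line geometric-series computation, and the normalized basis $e_j = \alpha_i^{q^j-1}/(z-\alpha_i^{q^j})$ really does yield the cyclic shift $e_j \mapsto e_{j+1}$, with wraparound supplied by $\alpha_i^{q^{d_i}} = \alpha_i$ and $\alpha_i^{q^{d_i}-1}=1$; the step you flag as the main obstacle is thus already complete as written. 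The one point that deserves an explicit sentence is that the $\overline{\field}$-linear map you are using is the coefficient-extraction operator on $\overline{\field}\doublebracket{z}$, and this agrees with $\Lambda_0|_U \otimes_{\field} \mathrm{id}$ because $\Lambda_0$ acts coordinatewise on power series --- so pulling the scalar $\alpha_i^{q^j-1}$ outside is legitimate. Your closing observation that the hypothesis $m \mid \lcm(\deg R_i)$ is superfluous is also correct: nothing in the argument uses it, and the dimension formula holds for every $m \geq 1$.
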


In particular, the exponents $e_i$ do not affect the dimension.

\begin{proposition}\label{square-free orbit size upper bound - positive degree - conjecture conclusion}
Let $R \in \field[z]$ be a nonzero square-free polynomial such that $\deg R \geq 1$ and $R$ is not divisible by $z$.
Let $R = c R_1 \cdots R_k$ be its factorization into irreducibles, and let $\ell = \lcm(\deg R_1, \dots, \deg R_k)$.
Conjecture~\ref{dimensions conjecture} implies that $\lambda_0^\ell(S) = S$ for all $S \in \field[z]$ with $\deg S \leq \deg R$.
\end{proposition}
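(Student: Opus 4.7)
The plan is to apply Conjecture~\ref{dimensions conjecture} with $m = \ell$ and observe that the conjectured dimension of the $\lambda_0^\ell$-fixed subspace exactly matches the dimension of the ambient space of polynomials of degree at most $\deg R$, forcing the two spaces to coincide.

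First, I would verify that the hypotheses of Conjecture~\ref{dimensions conjecture} hold for this choice of $m$: the polynomial $R$ is not divisible by $z$ and satisfies $\deg R \geq 1$ by assumption, and trivially $\ell$ is a divisor of itself. Next I would compute the predicted dimension. Since each $\deg R_i$ divides $\ell$, we have $\gcd(\ell, \deg R_i) = \deg R_i$ for every $i \in \{1, \dots, k\}$. Using the square-free hypothesis, $\deg R = \sum_{i=1}^k \deg R_i$ (the constant $c$ contributing nothing), so the conjecture asserts that the subspace
\[
	\{S \in \field[z] : \deg S \leq \deg R \text{ and } \lambda_0^\ell(S) = S\}
\]
has dimension $1 + \sum_{i=1}^k \gcd(\ell, \deg R_i) = 1 + \sum_{i=1}^k \deg R_i = 1 + \deg R$.

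Second, I would compare this with the ambient space $\{S \in \field[z] : \deg S \leq \deg R\}$, which has the monomial basis $1, z, z^2, \dots, z^{\deg R}$ and therefore dimension $\deg R + 1$. Because the fixed subspace is contained in the ambient space and the two have equal finite dimension, they must coincide. Consequently $\lambda_0^\ell(S) = S$ for every $S \in \field[z]$ with $\deg S \leq \deg R$, as required.

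There is no real obstacle: once Conjecture~\ref{dimensions conjecture} is granted, the argument is a one-line dimension count. All of the conceptual difficulty has been packaged into the conjecture itself, which explains why it is stated as the main structural claim of the section rather than as a lemma leading to Proposition~\ref{square-free orbit size upper bound - positive degree - conjecture conclusion}.
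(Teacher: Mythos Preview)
Your proposal is correct and follows essentially the same approach as the paper: apply Conjecture~\ref{dimensions conjecture} with $m=\ell$, note that $\gcd(\ell,\deg R_i)=\deg R_i$ so the conjectured dimension equals $1+\deg R$, and conclude that the fixed subspace is all of $\{S:\deg S\le\deg R\}$. The paper's proof is just a terser version of exactly this dimension count.
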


\begin{proof}
For $m = \ell$, Conjecture~\ref{dimensions conjecture} states that the vector space~\eqref{univariate subspace} has dimension
\[
	1 + \sum_{i = 1}^k \gcd(\ell, \deg R_i)
	= 1 + \sum_{i = 1}^k \deg R_i
	= 1 + \deg R,
\]
so in fact it is the entire space $\{S \in \field[z] : \deg S \leq \deg R\}$.
\end{proof}

A natural question is whether we can write down an explicit basis of the vector space~\eqref{univariate subspace}.
For $m = 1$, Conjecture~\ref{dimensions conjecture} implies that the subspace of fixed points has dimension $k + 1$.
The next conjecture provides a basis of this subspace, for certain polynomials $R$.
One basis element is $R$ itself, since $\lambda_0(R) = \Lambda_0(R^q) = R$ by Proposition~\ref{Cartier}.
We get $k$ additional basis elements from the following operation.
For a polynomial $S = \sum_{j = 0}^s c_j z^j \in \field[z]$ where $c_s \neq 0$, define $\Delta(S) = \sum_{j = 0}^s (s - j) c_j z^j$.
Equivalently, $\Delta(S) = z^{s - 1} \frac{\dd}{\dd w} (w^s S)$ where $w = \frac{1}{z}$.
From this it follows that $\Delta$ is a derivation.
That is, $\Delta(S T) = \Delta(S) T + S \Delta(T)$ for all $S, T \in \field[z]$.

\begin{conjecture}\label{fixed points basis conjecture}
Let $R \in \field[z]$ such that $\deg R \geq 1$.
Let $R = c R_1^{e_1} \cdots R_k^{e_k}$ be its factorization into irreducibles.
For each $i \in \{1, 2, \dots, k\}$, the polynomial $R_1^{e_1} \cdots R_{i - 1}^{e_{i - 1}} \Delta(R_i^{e_i}) R_{i + 1}^{e_{i + 1}} \cdots R_k^{e_k}$ is a fixed point of $\lambda_0$.
Moreover, if $R$ is not divisible by $z$ and $e_i \nequiv 0 \mod p$ for all $i$, where $p$ is the characteristic of $\field$, then these $k$ fixed points, along with $R$, are linearly independent.
\end{conjecture}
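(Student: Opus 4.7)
My plan is to derive both parts of Conjecture~\ref{fixed points basis conjecture} from a partial fraction identity for $\Delta(S)/S$. A direct computation using the definition of $\Delta$ verifies the identity $zS'(z) - sS(z) = -\Delta(S)(z)$ for every $S \in \field[z]$ of degree $s$. Combined with the classical logarithmic derivative $S'/S = \sum_\alpha 1/(z-\alpha)$, where the sum is over the roots $\alpha$ of $S$ in $\overline{\field}$ counted with multiplicity, this yields
\[
	\frac{\Delta(S)(z)}{S(z)} = \sum_\alpha \frac{-\alpha}{z-\alpha}.
\]
Since finite fields are perfect, each irreducible factor $R_i$ is separable, so the distinct roots of $R_i$ are permuted by the Frobenius $\alpha \mapsto \alpha^q$.

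For the fixed-point claim, I first dispose of the edge case $R_i = z$: then $\Delta(z^{e_i}) = 0$, so the $i$th polynomial is $0$ and is trivially fixed. Otherwise $R_i^{e_i}(0) \neq 0$, and $G \colonequal \Delta(R_i^{e_i})/R_i^{e_i}$ has a power series expansion at $z = 0$. Since the $i$th polynomial equals $G \cdot R$ in $\field\doublebracket{z}$, Proposition~\ref{Cartier} gives
\[
	\lambda_0(G \cdot R) = \Lambda_0(G \cdot R^q) = \Lambda_0(G) \cdot R,
\]
so it suffices to show $\Lambda_0(G) = G$. Using the partial fraction identity and expanding $-\alpha/(z-\alpha) = \sum_{n \ge 0} \alpha^{-n} z^n$ (valid since $\alpha \neq 0$), the series $G$ has $n$th coefficient $a_n = \sum_\alpha \alpha^{-n}$, summed over the roots of $R_i^{e_i}$ with multiplicity. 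Then $\Lambda_0(G)$ has $n$th coefficient $a_{qn} = \sum_\alpha (\alpha^q)^{-n}$, and since Frobenius permutes the root multiset of $R_i^{e_i}$, we get $a_{qn} = a_n$.

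For linear independence, I use the derivation property to rewrite $\Delta(R_i^{e_i}) = e_i R_i^{e_i-1} \Delta(R_i)$, so that the $i$th polynomial equals $e_i (R/R_i) \Delta(R_i)$. A relation $c_0 R + \sum_{i=1}^k c_i \cdot e_i (R/R_i) \Delta(R_i) = 0$, after division by $R$ in $\field(z)$, becomes
\[
	c_0 + \sum_{i=1}^k c_i e_i \frac{\Delta(R_i)}{R_i} = 0.
\]
The partial fraction identity expresses the left side as $c_0 + \sum_\alpha \frac{-c_{i(\alpha)} e_{i(\alpha)} \alpha}{z-\alpha}$ in $\overline{\field}(z)$, where $\alpha$ ranges over the roots of $R$ and $i(\alpha)$ is the unique index with $R_{i(\alpha)}(\alpha) = 0$; this index is well-defined since the $R_i$ are distinct separable irreducibles and therefore have pairwise disjoint root sets. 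Uniqueness of partial fraction decomposition forces every residue to vanish, and since $\alpha \neq 0$ (as $R$ is not divisible by $z$) and $e_i \neq 0$ in $\field$ (as $e_i \nequiv 0 \bmod p$), this forces $c_i = 0$ for all $i \geq 1$, whence also $c_0 = 0$.

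The hard part, to the extent that there is one, will be carefully interpreting $\Delta(R_i^{e_i})/R_i^{e_i}$ as an element of $\field\doublebracket{z}$ and justifying the term-by-term application of $\Lambda_0$ through Proposition~\ref{Cartier}; the substantive content reduces cleanly to two well-understood ingredients, Frobenius invariance of the root set of an irreducible polynomial over $\field$ and uniqueness of partial fraction decomposition.
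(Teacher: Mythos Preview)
The paper does not prove this statement: it is explicitly labeled a conjecture (in Section~\ref{section: univariate conjectures}) and introduced as one of two open problems ``discovered in earlier attempts to prove results in Section~\ref{section: orbit size univariate}'' that ``were not needed in the end.'' There is therefore no paper proof to compare against.

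Your argument is correct and in fact settles the conjecture. The two ingredients you isolate---Frobenius invariance of the root set of an irreducible polynomial over $\field$, and uniqueness of partial fractions over $\overline{\field}(z)$---are exactly what is needed. A couple of minor remarks. First, with $R = c R_1^{e_1}\cdots R_k^{e_k}$ and $G = \Delta(R_i^{e_i})/R_i^{e_i}$, one has $G\cdot R = c\cdot(\text{$i$th polynomial})$ rather than equality on the nose; since $\lambda_0$ is $\field$-linear (and in any case $c^{q-1}=1$ in $\field^\times$, so $\lambda_0$ is unchanged if $R$ is replaced by $R/c$), this extra scalar is harmless for both the fixed-point and the independence claims. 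Second, your treatment of the edge cases is complete: if $R_i = z$ then $\Delta(z^{e_i}) = 0$ as you note, and if $e_i \equiv 0 \pmod p$ then $G = e_i\,\Delta(R_i)/R_i = 0$, so the $i$th polynomial is $0$ and again trivially fixed---your coefficient computation $a_{qn}=a_n$ covers this uniformly anyway. Finally, the appeal to Proposition~\ref{Cartier} is legitimate because $G \in \field\doublebracket{z}$ (its coefficients $a_n = e_i\sum_\alpha \alpha^{-n}$ are Galois-invariant, hence in $\field$) and $R$ is a polynomial, so $\Lambda_0(G R^q) = \Lambda_0(G)\,R$ holds as an identity of power series, and both sides equal the polynomial $c\cdot(\text{$i$th polynomial})$.
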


If $R$ is divisible by $z$, then we don't get a basis element because $\Delta(z) = 0$.
Similarly, if $e_i \equiv 0 \mod p$ then $\Delta(R_i^{e_i}) = e_i R_i^{e_i - 1} \Delta(R_i) = 0$.

Conjecture~\ref{fixed points basis conjecture} implies that $\Delta(R)$ is a fixed point of $\lambda_0$, since we can use the fact that $\Delta$ is a derivation to write $\Delta(R)$ as a sum of fixed points.

For $m \geq 2$, it would be interesting to know how to extend the basis of fixed points in Conjecture~\ref{fixed points basis conjecture} to a basis of the vector space~\eqref{univariate subspace}.

\section*{Acknowledgment}

The third author thanks Boris Adamczewski for several helpful discussions.

\section*{Appendix}

The next few pages contain the tables and figures mentioned in Section~\ref{section: numeric examples}.

\small

\begin{table}[h]
	\raggedright
	$q = 2$:
	\[
		\begin{array}{c|c|c|c|c|c}
			h & d & P & \text{aut.\ size} & q^{h d} & \text{bound} \\
			\hline
			\hline
			1 & 1 & y + x & 3 & 2 & 6 \\
			2 & 1 & (x^2 + x + 1) y + x^2 & 6 & 4 & 11 \\
			3 & 1 & (x^3 + x + 1) y + x^3 & 11 & 8 & 16 \\
			4 & 1 & (x^4 + x + 1) y + x^4 & 20 & 16 & 27 \\
			5 & 1 & (x^5 + x^3 + 1) y + x^5 & 37 & 32 & 45 \\
			6 & 1 & (x^6 + x + 1) y + x^6 & 70 & 64 & 77 \\
			7 & 1 & (x^7 + x + 1) y + x^7 & 135 & 128 & 147 \\
			8 & 1 & (x^8 + x^7 + x^2 + x + 1) y + x^8 & 264 & 256 & 280 \\
			9 & 1 & (x^9 + x^5 + 1) y + x^9 & 521 & 512 & 541 \\
			10 & 1 & (x^{10} + x^3 + 1) y + x^{10} & 1034 & 1024 & 1063 \\
			\hline
			1 & 2 & x y^2 + (x + 1) y + x & 7 & 4 & 10 \\
			2 & 2 & x^2 y^2 + (x^2 + x + 1) y + x^2 & 14 & 16 & 25 \\
			3 & 2 & (x^3 + x^2 + 1) y^2 + (x^3 + 1) y + x & 68 & 64 & 93 \\
			4 & 2 & (x^4 + x + 1) y^2 + (x^4 + x^2 + x + 1) y + x & 252 & 256 & 311 \\
			5 & 2 & (x^5 + x^3 + 1) y^2 + (x^5 + x + 1) y + x & 1052 & 1024 & 1191 \\
			6 & 2 & (x^6 + x^5 + 1) y^2 + (x^6 + x^2 + x + 1) y + x & 4062 & 4096 & 4423 \\
			7 & 2 & (x^7 + x + 1) y^2 + (x^7 + x^4 + x^3 + x + 1) y + x & 16424 & 16384 & 17287 \\
			\hline
			1 & 3 & x y^3 + y^2 + (x + 1) y + x & 11 & 8 & 18 \\
			2 & 3 & (x^2 + x + 1) y^3 + y^2 + (x^2 + 1) y + x^2 + x & 61 & 64 & 93 \\
			3 & 3 & (x^3 + x + 1) y^3 + y^2 + (x^3 + x^2 + x + 1) y + x^3 + x^2 & 533 & 512 & 613 \\
			4 & 3 & (x^4 + x + 1) y^3 + y^2 + (x^4 + 1) y + x^4 + x^3 + x & 4213 & 4096 & 4871 \\
			\hline
			1 & 4 & (x + 1) y^4 + y^2 + (x + 1) y + x & 20 & 16 & 33 \\
			2 & 4 & (x^2 + x + 1) y^4 + y^3 + (x^2 + x + 1) y + x^2 + x & 216 & 256 & 358 \\
			3 & 4 & (x^3 + x + 1) y^4 + y^3 + (x^3 + 1) y + x^2 + x & 3956 & 4096 & 4870 \\
			\hline
			1 & 5 & (x + 1) y^5 + (x + 1) y^2 + y + x & 37 & 32 & 67 \\
			2 & 5 & (x^2 + x + 1) y^5 + y^4 + y^3 + x^2 y^2 + y + x^2 + x & 889 & 1024 & 1510 \\
			3 & 5 & (x^3 + x^2 + 1) y^5 + y^4 + x^3 y^2 + (x + 1) y + x^3 + x^2 + x & 43913 & 32768 & 48134
		\end{array}
	\]
	$q = 3$:
	\[
		\begin{array}{c|c|c|c|c|c}
			h & d & P & \text{aut.\ size} & q^{h d} & \text{bound} \\
			\hline
			\hline
			1 & 1 & (x + 1) y + x & 4 & 3 & 7 \\
			2 & 1 & (2 x^2 + x + 1) y + x^2 & 11 & 9 & 14 \\
			3 & 1 & (x^3 + 2 x + 1) y + x^3 & 30 & 27 & 35 \\
			4 & 1 & (2 x^4 + x + 1) y + x^4 & 85 & 81 & 90 \\
			5 & 1 & (x^5 + 2 x + 1) y + x^5 & 248 & 243 & 254 \\
			6 & 1 & (2 x^6 + x + 1) y + x^6 & 735 & 729 & 740 \\
			\hline
			1 & 2 & (x + 1) y^2 + y + x & 9 & 9 & 14 \\
			2 & 2 & (x^2 + x + 2) y^2 + y + x^2 & 79 & 81 & 90 \\
			3 & 2 & (x^3 + x^2 + 2 x + 1) y^2 + y + x^3 + x & 727 & 729 & 788 \\
			4 & 2 & (x^4 + x^3 + 2) y^2 + y + x^4 + x & 6533 & 6561 & 6728
		\end{array}
	\]
	\caption{Polynomials in $\field[x, y]$ achieving the maximum unminimized automaton size for given values of $q$, $h$, and $d$, for comparison with the bound in Theorem~\ref{kernel size upper bound}.}
	\label{automaton size table}
\end{table}

\begin{table}[h]
	\raggedright
	$q = 2$:
	\[
		\begin{array}{c|c|c|c|c}
			h & d & P & \text{orbit size} & \text{bound} \\
			\hline
			\hline
			1 & 1 & y + x & 2 & 4 \\
			2 & 1 & y + x^2 & 3 & 7 \\
			3 & 1 & (x^3 + x + 1) y + x & 4 & 8 \\
			4 & 1 & (x^4 + x^3 + 1) y + x & 5 & 11 \\
			5 & 1 & (x^5 + x + 1) y + x & 7 & 13 \\
			6 & 1 & (x^6 + x^3 + 1) y + x & 7 & 13 \\
			7 & 1 & (x^7 + x^6 + x^2 + x + 1) y + x & 13 & 19 \\
			8 & 1 & (x^8 + x^3 + 1) y + x & 16 & 24 \\
			9 & 1 & (x^9 + x^2 + 1) y + x & 21 & 29 \\
			10 & 1 & (x^{10} + x^6 + x^3 + x^2 + 1) y + x & 31 & 39 \\
			\hline
			1 & 2 & x y^2 + (x + 1) y + x & 3 & 6 \\
			2 & 2 & x^2 y^2 + (x^2 + x + 1) y + x^2 & 6 & 9 \\
			3 & 2 & (x^3 + x^2 + 1) y^2 + (x^3 + 1) y + x & 12 & 29 \\
			4 & 2 & (x^4 + x^2 + x) y^2 + (x^4 + x + 1) y + x^4 & 25 & 55 \\
			5 & 2 & (x^5 + x^3 + 1) y^2 + (x^5 + x + 1) y + x & 60 & 167 \\
			6 & 2 & (x^6 + x^4 + x) y^2 + (x^5 + x + 1) y + x & 61 & 327 \\
			7 & 2 & (x^7 + x + 1) y^2 + (x^7 + x^4 + x^3 + x + 1) y + x & 168 & 903 \\
			8 & 2 & (x^8 + x^3 + 1) y^2 + (x^8 + x^7 + x^2 + x + 1) y + x^8 & 240 & 3849 \\
			\hline
			1 & 3 & (x + 1) y^3 + y^2 + (x + 1) y + x & 7 & 10 \\
			2 & 3 & (x^2 + 1) y^3 + y^2 + (x^2 + x + 1) y + x^2 & 14 & 29 \\
			3 & 3 & (x^3 + x + 1) y^3 + x^2 y^2 + y + x^3 & 85 & 101 \\
			4 & 3 & (x^4 + x^2 + x + 1) y^3 + y^2 + y + x^3 + x^2 + x & 373 & 775 \\
			5 & 3 & (x^5 + x^2 + 1) y^3 + y^2 + (x^5 + x^3 + 1) y + x^5 + x^4 + x & 7621 & 7687 \\
			\hline
			1 & 4 & (x + 1) y^4 + y^2 + (x + 1) y + x & 12 & 17 \\
			2 & 4 & x^2 y^4 + (x^2 + x + 1) y^3 + (x + 1) y + x^2 + x & 26 & 102 \\
			3 & 4 & (x^3 + x^2 + x + 1) y^4 + (x^3 + x + 1) y^3 + (x^2 + 1) y + x^3 & 375 & 774 \\
			4 & 4 & (x^4 + 1) y^4 + (x^4 + x^3 + 1) y^3 + (x + 1) y + x^4 + x^3 & 5209 & 6151 \\
			\hline
			1 & 5 & (x + 1) y^5 + (x + 1) y^2 + y + x & 21 & 35 \\
			2 & 5 & (x^2 + 1) y^5 + y^4 + x y^3 + x^2 y^2 + (x + 1) y + x^2 + x & 122 & 486 \\
			3 & 5 & (x^3 + x^2 + 1) y^5 + y^4 + x^3 y^2 + (x + 1) y + x^3 + x^2 + x & 15241 & 15366
		\end{array}
	\]
	$q = 3$:
	\[
		\begin{array}{c|c|c|c|c}
			h & d & P & \text{orbit size} & \text{bound} \\
			\hline
			\hline
			1 & 1 & y + x & 2 & 4 \\
			2 & 1 & (x^2 + 1) y + x & 3 & 5 \\
			3 & 1 & (x^3 + 2 x^2 + 1) y + x & 4 & 8 \\
			4 & 1 & (2 x^4 + x + 1) y + x & 5 & 9 \\
			5 & 1 & (x^5 + 2 x^2 + 1) y + x & 7 & 11 \\
			6 & 1 & (x^6 + x + 1) y + x^2 & 7 & 11 \\
			7 & 1 & (x^7 + 2 x^3 + 1) y + x & 13 & 17 \\
			\hline
			1 & 2 & x y^2 + y + x & 3 & 5 \\
			2 & 2 & (x^2 + 1) y^2 + y + x^2 + x & 7 & 9 \\
			3 & 2 & (x^3 + 2 x + 2) y^2 + y + x^2 + x & 25 & 59 \\
			4 & 2 & (x^3 + 2 x + 2) y^2 + y + x^4 & 79 & 167
		\end{array}
	\]
	\caption{Polynomials in $\field[x, y]$ for which the initial state achieves the maximum orbit size under $\lambda_{0, 0}$ for given values of $q$, $h$, and $d$. The final column contains the value of $q^{(h - 1) (d - 1)} \Landaulcm(h, d, d) + \floor{\log_q h} + \floor{\log_q \max(h, d)} + 3$ from Theorem~\ref{kernel size upper bound}.}
	\label{orbit size table}
\end{table}

\begin{figure}[h]
	\begin{subfigure}{0.40\textwidth}
		\includegraphics[width = \textwidth]{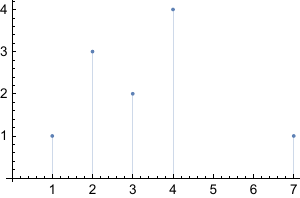}
		\caption*{$q = 2$, $h = 1$}
	\end{subfigure}
	\qquad \qquad
	\begin{subfigure}{0.40\textwidth}
		\includegraphics[width = \textwidth]{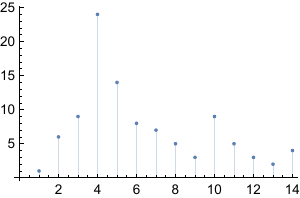}
		\caption*{$q = 2$, $h = 2$}
	\end{subfigure} \\
	\begin{subfigure}{0.40\textwidth}
		\includegraphics[width = \textwidth]{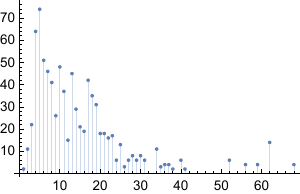}
		\caption*{$q = 2$, $h = 3$}
	\end{subfigure}
	\qquad \qquad
	\begin{subfigure}{0.40\textwidth}
		\includegraphics[width = \textwidth]{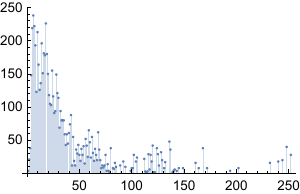}
		\caption*{$q = 2$, $h = 4$}
	\end{subfigure} \\
	\begin{subfigure}{0.40\textwidth}
		\includegraphics[width = \textwidth]{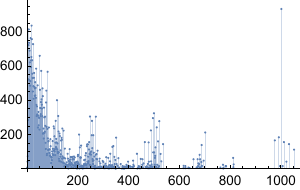}
		\caption*{$q = 2$, $h = 5$}
	\end{subfigure}
	\qquad \qquad
	\begin{subfigure}{0.40\textwidth}
		\includegraphics[width = \textwidth]{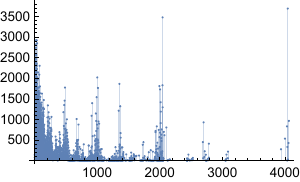}
		\caption*{$q = 2$, $h = 6$}
	\end{subfigure} \\
	\begin{subfigure}{0.40\textwidth}
		\includegraphics[width = \textwidth]{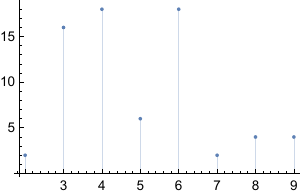}
		\caption*{$q = 3$, $h = 1$}
	\end{subfigure}
	\qquad \qquad
	\begin{subfigure}{0.40\textwidth}
		\includegraphics[width = \textwidth]{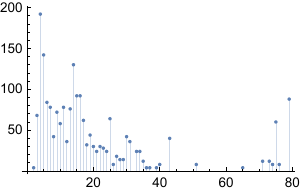}
		\caption*{$q = 3$, $h = 2$}
	\end{subfigure} \\
	\begin{subfigure}{0.40\textwidth}
		\includegraphics[width = \textwidth]{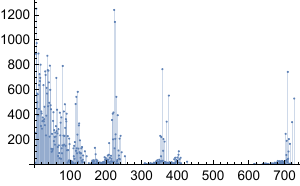}
		\caption*{$q = 3$, $h = 3$}
	\end{subfigure}
	\qquad \qquad
	\begin{subfigure}{0.40\textwidth}
		\includegraphics[width = \textwidth]{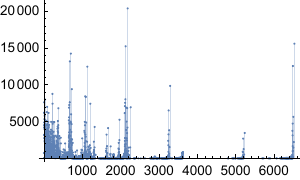}
		\caption*{$q = 3$, $h = 4$}
	\end{subfigure}
	\caption{Number of polynomials (vertical axis) with degree $d = 2$ that produce unminimized automata with a given size (horizontal axis). The top six plots are for $q = 2$ and vary $h \in \{1, 2, \dots, 6\}$. In the bottom four, $q = 3$ and $h \in \{1, 2, 3, 4\}$.}
	\label{automaton size figure}
\end{figure}

\clearpage

\normalsize

\end{document}